\tikzstyle{vertex}=[circle,draw,inner sep=0pt,minimum size=6pt]
\newcommand{\vertex}{\node[vertex]}
\newtheorem{theorem}{Theorem}[section]
\newtheorem{lemma}[theorem]{Lemma}
\newtheorem{proposition}[theorem]{Proposition}
\newtheorem{conjecture}[theorem]{Conjecture}
\begin{document}

\title[Gluing methods for string C-groups]{Two gluing methods for string C-group representations of the symmetric groups}

\author{Dimitri Leemans}
\address{Dimitri Leemans, Département de Mathématique, Université libre de Bruxelles, C.P.216, Boulevard du Triomphe, 1050 Brussels, Belgiumi. Orcid number 0000-0002-4439-502X.}
\email{leemans.dimitri@ulb.be}

\author{Jessica Mulpas}
\address{Jessica Mulpas, Département de Mathématique, Université libre de Bruxelles, C.P.216, Boulevard du Triomphe, 1050 Brussels, Belgiumi. Orcid number 0000-0002-6128-2561.}
\email{jessica.mulpas@outlook.com}

\keywords{String C-group representations, symmetric groups, permutation representation graphs, CPR graphs}
\subjclass[2000]{20B30, 05C25, 52B15}

\begin{abstract}
The study of string C-group representations of rank at least $n/2$ for the symmetric group $S_n$ has gained a lot of attention in the last fifteen years.
In a recent paper, Cameron et al. gave a list of permutation representation graphs of rank $r\geq n/2$ for $S_n$, having a fracture graph and a non-perfect split. They conjecture that these graphs are permutation representation graphs of string C-groups. In trying to prove this conjecture, we discovered two new techniques to glue two CPR graphs for symmetric groups together.  We discuss the cases in which they yield new CPR graphs. By doing so, we invalidate the conjecture of Cameron et al.
We believe our gluing techniques will be useful in the study of string C-group representations of high ranks for the symmetric groups.
\end{abstract}

\maketitle

\section{Introduction}
In the last fifteen years, a lot of attention has been dedicated to string C-group representations of the symmetric and alternating groups. We refer to~\cite[Section 4]{PolBible} for a survey of these results until 2019.
Results obtained in several papers (see~\cite{CPRSn, CPRSnCorr, extSn, RankofPolAltGroups, highestrkan}) led Cameron, Fernandes and Leemans to prove that if $n$ is large enough, up to isomorphism and duality, the number of string C-groups of rank $r$ for $S_n$, with $r\geq (n+3)/2$, is the same as the number of string C-group representations of rank $r+1$ for $S_{n+1}$ (see~\cite[Theorem 1.1]{woof}).
In the process of proving that theorem, they determined all permutation representation graphs for $S_n$ with rank $r\geq n/2$ having a fracture graph and a split that is not perfect (see~\cite[Table 6]{woof}). They also conjectured that these graphs are permutation representation graphs of string C-groups.
In the process of trying to prove this conjecture, we discovered a gluing method which works as follows.

\begin{theorem}\label{taping}
    Suppose that the following two permutation representation graphs $\mathcal{G}$ and $\tilde{\mathcal{G}}$ (where $\mathcal{G}'$ and $\mathcal{G}''$ are subgraphs containing either no edge or only edges of labels at least $1$) are CPR graphs.

\begin{center}
\begin{tikzpicture}
\vertex[label=above:$A$] (1) at (0,0) {};
\vertex[] (2) at (1,0) {};
\node[draw, right=-6pt of 2, shape=ellipse, style={dashed, minimum width=86pt, minimum height=43pt}] (4) {$\mathcal{G}'$};
\draw[-] (1) -- (2)
 node[pos=0.5,above] {$0$};
\end{tikzpicture}
\end{center}

\begin{center}
\begin{tikzpicture}
\vertex[label=above:$B$] (1) at (0,0) {};
\vertex[] (2) at (1,0) {};

\node[draw, right=-6pt of 2, shape=ellipse, style={dashed, minimum width=86pt, minimum height=43pt}] (4) {$\mathcal{G}''$};
\draw[-] (1) -- (2)
 node[pos=0.5,above] {$0$};

\end{tikzpicture}
\end{center}

\noindent Then gluing vertex $A$ to vertex $B$ and relabeling the edges of the first graph by transforming every label $l$ into a label $-(l+1)$ gives a new CPR graph.
\begin{center}
\begin{tikzpicture}

\vertex[] (2) at (1,0) {};
\vertex[label=above:$C$] (3) at (2,0) {};
\vertex[] (4) at (3,0) {};

\node[draw, left=-6pt of 2, shape=ellipse, style={dashed, minimum width=86pt, minimum height=43pt}] (6) {$\mathcal{G}'$};
\node[draw, right=-6pt of 4, shape=ellipse, style={dashed, minimum width=86pt, minimum height=43pt}] (7) {$\mathcal{G}''$};
\draw[-] (3) -- (2)
 node[pos=0.5,above] {$-1$};
\draw[-] (3) -- (4)
 node[pos=0.5,above] {$0$};

\end{tikzpicture}
\end{center}

\end{theorem}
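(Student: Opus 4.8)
The plan is to verify directly the two defining properties of a CPR graph for the glued graph, call it $\mathcal{H}$: that it is a string permutation representation graph, and that the group it generates satisfies the intersection property, the latter by induction on the rank $r+s$. Write $n_1=|V(\mathcal{G})|$, $n_2=|V(\tilde{\mathcal{G}})|$, let $\rho_0,\dots,\rho_{r-1}$ and $\tilde\rho_0,\dots,\tilde\rho_{s-1}$ be the permutations read off $\mathcal{G}$ and $\tilde{\mathcal{G}}$, and let $\sigma_{-r},\dots,\sigma_{-1},\sigma_0,\dots,\sigma_{s-1}$ be those read off $\mathcal{H}$, so that after the relabelling $l\mapsto-(l+1)$ one has $\sigma_{-k}=\rho_{k-1}$ for $1\le k\le r$ and $\sigma_j=\tilde\rho_j$ for $0\le j\le s-1$. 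Identifying $A$ and $B$ with the glued vertex $C$ gives $V(\mathcal{H})=V(\mathcal{G}')\sqcup\{C\}\sqcup V(\mathcal{G}'')$; set $X_1=\{C\}\cup V(\mathcal{G}')$ and $X_2=\{C\}\cup V(\mathcal{G}'')$, so $X_1\cap X_2=\{C\}$ and $X_1\cup X_2=V(\mathcal{H})$. Since $A$ meets only the edge of label $0$ in $\mathcal{G}$, and $B$ only the edge of label $0$ in $\tilde{\mathcal{G}}$, the permutations $\sigma_{-1}=(C\ a)$ and $\sigma_0=(C\ b)$ are transpositions, $\sigma_{-2},\dots,\sigma_{-r}$ are supported on $V(\mathcal{G}')$, and $\sigma_1,\dots,\sigma_{s-1}$ on $V(\mathcal{G}'')$; hence $L:=\langle\sigma_{-r},\dots,\sigma_{-1}\rangle$ is a copy of the group of $\mathcal{G}$, supported on $X_1$, and $R:=\langle\sigma_0,\dots,\sigma_{s-1}\rangle$ is the group of $\tilde{\mathcal{G}}$, supported on $X_2$. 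The string property is then immediate: the relabelling is order-reversing, so it preserves consecutiveness of labels and thus the string relations internal to $\mathcal{G}$, those internal to $\tilde{\mathcal{G}}$ being untouched; and for a crossing pair $i<0\le j$ with $(i,j)\ne(-1,0)$ one has $|i-j|\ge 2$ while $\sigma_i$ and $\sigma_j$ have disjoint supports (contained in $V(\mathcal{G}')$ and $X_2$ when $i\le-2$, and in $X_1$ and $V(\mathcal{G}'')$ when $i=-1$), so they commute. In particular $\langle\sigma_{-r},\dots,\sigma_{s-1}\rangle=\langle L,R\rangle$; and when $\mathcal{G}$, $\tilde{\mathcal{G}}$ are CPR graphs for symmetric groups, $L=\mathrm{Sym}(X_1)$, $R=\mathrm{Sym}(X_2)$, and as $\langle L,R\rangle$ contains all transpositions of $X_1$, of $X_2$, and each $(x\ y)=(x\ C)(C\ y)(x\ C)$ with $x\in X_1\setminus\{C\}$, $y\in X_2\setminus\{C\}$, we get $\langle\sigma_{-r},\dots,\sigma_{s-1}\rangle=S_{n_1+n_2-1}$.

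For the intersection property I would induct on $r+s$. If $r=s=1$, then $\mathcal{H}$ is the path on vertices $a,C,b$ with edges $\{a,C\},\{C,b\}$ of labels $-1,0$, a CPR graph for $S_3$. Suppose $r+s\ge 3$, and put $\mathcal{H}_1=\langle\sigma_i:i\ne s-1\rangle$, $\mathcal{H}_2=\langle\sigma_i:i\ne-r\rangle$, $\mathcal{H}_0=\langle\sigma_i:i\ne-r,\ i\ne s-1\rangle$. By the standard reduction for string C-group representations (it suffices that the subgroups obtained by omitting the largest, resp. the smallest, generator are string C-group representations and that they intersect in the subgroup generated by all remaining generators), it is enough to show that $\mathcal{H}_1,\mathcal{H}_2$ are string C-groups and $\mathcal{H}_1\cap\mathcal{H}_2=\mathcal{H}_0$. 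Deleting the edges of label $s-1$ from $\tilde{\mathcal{G}}$ produces a permutation representation graph $\tilde{\mathcal{G}}^{-}$ of exactly the shape allowed in the statement (the vertex $B$ still meets only the edge of label $0$), with group $\langle\tilde\rho_0,\dots,\tilde\rho_{s-2}\rangle$, which is a string C-group since it is generated by a subset of the distinguished generators of $R$; if $s\ge 2$, the graph produced from $\mathcal{G}$ and $\tilde{\mathcal{G}}^{-}$ by our construction carries $\mathcal{H}_1$ and has rank $r+s-1$, so it is a CPR graph by the induction hypothesis, and if $s=1$ then $\mathcal{H}_1=L$ is a string C-group by assumption. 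The argument for $\mathcal{H}_2$ is symmetric (using $\mathcal{G}^{-}$, and $\mathcal{H}_2=R$ when $r=1$). Thus everything reduces to the single identity $\mathcal{H}_1\cap\mathcal{H}_2=\mathcal{H}_0$.

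Writing $L^{-}=\langle\rho_0,\dots,\rho_{r-2}\rangle=\langle\sigma_{-r+1},\dots,\sigma_{-1}\rangle\le L$ and $R^{-}=\langle\tilde\rho_0,\dots,\tilde\rho_{s-2}\rangle=\langle\sigma_0,\dots,\sigma_{s-2}\rangle\le R$, we have $\mathcal{H}_1=\langle L,R^{-}\rangle$, $\mathcal{H}_2=\langle L^{-},R\rangle$, $\mathcal{H}_0=\langle L^{-},R^{-}\rangle$, and $\mathcal{H}_0\subseteq\mathcal{H}_1\cap\mathcal{H}_2$ is clear. For the reverse inclusion the idea is to exploit the partition $V(\mathcal{H})=V(\mathcal{G}')\sqcup\{C\}\sqcup V(\mathcal{G}'')$: the groups $R,R^{-}$ fix $V(\mathcal{G}')$ pointwise, $L,L^{-}$ fix $V(\mathcal{G}'')$ pointwise, and a permutation built from these groups can transfer a point from one side of $C$ to the other only by first sending it to $C$ and then applying one of the transpositions $\sigma_{-1}=(C\ a)$ or $\sigma_0=(C\ b)$. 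Following the $\langle\sigma_i\rangle$-orbit of $C$ through these two transpositions, one shows that an element of $\mathcal{H}_1$ acts on the $\tilde{\mathcal{G}}$-side only through $R^{-}$ and an element of $\mathcal{H}_2$ acts on the $\mathcal{G}$-side only through $L^{-}$; hence an element of $\mathcal{H}_1\cap\mathcal{H}_2$ can be rewritten as a word in $L^{-}$ and $R^{-}$, and the remaining ambiguity at $C$ is then absorbed by invoking the intersection property of $L$ for the parabolics $L^{-}=\langle\rho_i:i\ne r-1\rangle$ and $\langle\rho_i:i\ne0\rangle$, and of $R$ for $R^{-}=\langle\tilde\rho_j:j\ne s-1\rangle$ and $\langle\tilde\rho_j:j\ne0\rangle$. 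This last step is the crux and the place I expect the real difficulty to lie: one must make rigorous that membership in $\mathcal{H}_1$ or $\mathcal{H}_2$ genuinely confines the mixing that the two transpositions $(C\ a)$ and $(C\ b)$ create at the single shared vertex $C$, and it is exactly here that the hypotheses on $\mathcal{G}$ and $\tilde{\mathcal{G}}$ near $A$ and $B$ — a unique incident edge, of label $0$ — are indispensable.
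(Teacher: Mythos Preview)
Your inductive framework is essentially the same as the paper's (the paper uses a double induction on $(r,\tilde r)$, you use induction on $r+s$, and both reduce via Proposition~2E16(a) to checking that $\mathcal H_1,\mathcal H_2$ are string C-groups and that $\mathcal H_1\cap\mathcal H_2=\mathcal H_0$). The string property and the inductive step for $\mathcal H_1,\mathcal H_2$ are fine.

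The genuine gap is exactly where you say it is: the verification of $\mathcal H_1\cap\mathcal H_2=\mathcal H_0$. Your sketch --- ``an element of $\mathcal H_1$ acts on the $\tilde{\mathcal G}$-side only through $R^-$'', and then ``the remaining ambiguity at $C$ is absorbed by the intersection property of $L$ and $R$'' --- is not a proof and, as stated, is not even well-posed: $\mathcal H_1=\langle L,R^-\rangle$ does \emph{not} preserve $X_2$ (a point of $V(\mathcal G'')$ can be sent to $C$ by $R^-$ and then into $V(\mathcal G')$ by $L$), so there is no ``restriction of $\mathcal H_1$ to the $\tilde{\mathcal G}$-side'' to speak of. Likewise, the appeal to the intersection property of $L$ and $R$ is only suggestive: $\mathcal H_1$ and $\mathcal H_2$ are not products of parabolics of $L$ and $R$, so the parabolic intersections in $L$ and $R$ do not directly control $\mathcal H_1\cap\mathcal H_2$.

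What is missing is precisely the structural input the paper supplies in its Lemma~3.1: any sggi whose permutation representation graph has the shape ``$\mathcal G'$ --- unique $i$-edge --- $\mathcal G''$'' (labels ${<}\,i$ on the left, ${>}\,i$ on the right) acts as the full symmetric group on the connected component containing the $i$-edge, and as an independent direct factor on the remaining components. This is proved via primitivity (the unique $i$-edge is a perfect split) together with the fact that a primitive group containing a transposition is the symmetric group. Applying this lemma to each of $\mathcal H_0,\mathcal H_1,\mathcal H_2$ (with $i=-1$ or $i=0$) gives $\mathcal H_0\cong S_t\times H\times K$ on the $\mathcal H_0$-orbit $\mathcal O$ of $C$ and the leftover left/right orbits; one then checks that $\mathcal H_1\cap\mathcal H_2$ preserves $\mathcal O$ (its $\mathcal H_1$- and $\mathcal H_2$-orbits of $C$ only grow on opposite sides, so their intersection is $\mathcal O$) and has the same projections $S_t$, $H$, $K$, whence equality with $\mathcal H_0$. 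Without this symmetric-group identification your ``mixing through $C$'' heuristic has no purchase; you should either prove an analogue of Lemma~3.1 or invoke it.
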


We give the proof of this theorem in Section~\ref{works}.
We also discovered what we thought would be another successful gluing method (see Conjecture~\ref{false}) which we found out does not work in all cases. This permits us to show that at least one graph of~\cite[Table 6]{woof}, namely graph (X), is not a permutation representation graph of a string C-group, and therefore that the conjecture of Cameron, Fernandes and Leemans in~\cite[Section 5.2]{woof} is false.

%%%%%%%%%%%%%%%

\section{Preliminaries}\label{prelim}
\subsection{String C-groups}

A {\em string C-group representation} (or string C-group for short) is a pair $(G,S)$ with $S:=\{\rho_0,\rho_1,...,\rho_{r-1}\}$ an ordered set of involutions that generates the group $G$, satisfying the following two properties.
\begin{description}[style=unboxed,leftmargin=0cm]
\item[(SP)] 
the {\em string property}, that is $(\rho_i\rho_j)^2=1_G$ for all $i,j \in \{ 0,1,...,r-1 \}$ with $\mid i-j \mid \geq 2$;

\item[(IP)] the {\em intersection property}, that is $\langle \rho_i \mid i \in I \rangle \cap \langle \rho_j \mid j \in J \rangle = \langle \rho_k \mid k \in I \cap J \rangle$ for any $I, J \subseteq \{ 0, 1,...,r-1 \}$.
\end{description}
When $(G,S)$ only satisfies the string property it is called a {\em string group generated by involutions} (or {\em sggi} for short). The {\em rank} of $(G,S)$ is the size of $S$.

Note that, from the definition above, one can  observe that string C-groups are smooth quotients of Coxeter groups with string diagrams. It is also a well-known fact that string C-groups are in one-to-one correspondence with  abstract regular polytopes, the latter being equivalent geometric formulations of the former (see~\cite[Section 2E]{ARP}).

For any subset $I\subseteq \{0, \ldots, r-1\}$, we denote $\langle \rho_j : j \in I\rangle$ by $G_I$. When $G$ is a string C-group, so is $G_I$.
If $\vert I\vert = r-1$ or $r-2$ then $\{0, \ldots, r-1\}\setminus I = \{i\}$ or $\{i,j\}$ (for some $i,j\in\{0,...,r-1\}$) and we denote $G_I$ by $G_i$ or $G_{i,j}$, respectively.

The {\em Schl\"afli type} of a string C-group representation $(G,\{\rho_0, \ldots, \rho_{r-1}\})$ is the ordered set $\{p_1,p_2,...,p_{r-1} \}$ where $p_i$ is the order of the element $\rho_{i-1}\rho_{i}$ for $i=1, \ldots, r-1$.

The {\em dual} of a string C-group representation $(G,\{\rho_0,\rho_1,...,\rho_{r-1}\})$ is the string C-group representation $(G,\{\rho_{r-1},\rho_{r-2},...,\rho_{0}\})$.

As in~\cite{RankofPolAltGroups}, for a sggi $(G, \{\rho_0,\rho_1,...,\rho_{r-1}\})$, an involution $\tau$ in a supergroup of $G$ such that $\tau\notin G$ and a fixed $k\in\{0,1,...,r-1\}$, one can define a new sggi $(G^*,S^*)$ where $S^* := \{\rho_i \tau^{\delta_{i,k}}\vert i\in\{0,1,...,r-1\}\}$ and $G^* := \langle S^*\rangle$
that we call the {\em sesqui-extension} of $G$ with respect to $\rho_k$ and $\tau$ (or {\em $k$-sesqui-extension} of $G$ with respect to $\tau$).
We have the following result.

\begin{proposition} \cite[Lemma 5.4]{RankofPolAltGroups}\label{sesqui}
If $G=\langle \rho_0,\rho_1,...,\rho_{r-1}\rangle$ and $G^*=\langle \rho_i \tau^{\delta_{i,k}}\vert i\in\{0,1,...,r-1\}\rangle$ is a $k$-sesqui extension of $G$ then $G^*\cong G$ or $G^*\cong G\times C_2$ (when $\tau\in G^*$).
\end{proposition}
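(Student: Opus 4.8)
The plan is to realize $G^*$ inside the group $\langle G,\tau\rangle$ and then to distinguish the two possible outcomes according to whether or not $\tau$ itself lies in $G^*$; the dichotomy in the statement should fall out of a single projection homomorphism.

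First I would record the structural consequences of the hypotheses. By the very construction of the sesqui-extension, $\tau$ is an involution commuting with each $\rho_i$ (this is what forces each $\rho_i\tau^{\delta_{i,k}}$ to be again an involution and the string relations to survive), so $\tau$ centralizes $G=\langle\rho_0,\dots,\rho_{r-1}\rangle$. Combined with $\tau\notin G$ and $\tau^2=1_G$, this gives $G\cap\langle\tau\rangle=\{1_G\}$, and hence $\langle G,\tau\rangle = G\times\langle\tau\rangle\cong G\times C_2$. Each generator $\rho_i\tau^{\delta_{i,k}}$ of $G^*$ plainly lies in $\langle G,\tau\rangle$, so $G^*\leq G\times C_2$.

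Next I would introduce the projection $\pi\colon G\times\langle\tau\rangle\to G$ that is the identity on $G$ and sends $\tau\mapsto 1_G$; its kernel is $\langle\tau\rangle$. Restricting $\pi$ to $G^*$, the images of the generators are $\pi(\rho_i)=\rho_i$ for $i\neq k$ and $\pi(\rho_k\tau)=\rho_k$, so $\pi(G^*)=\langle\rho_0,\dots,\rho_{r-1}\rangle=G$ and $\pi|_{G^*}$ is surjective. Its kernel is $G^*\cap\langle\tau\rangle$, which, since $\langle\tau\rangle\cong C_2$ has order $2$, is either trivial or all of $\langle\tau\rangle$; this is precisely the case split appearing in the statement.

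Finally I would settle the two cases. If $\tau\notin G^*$, then $G^*\cap\langle\tau\rangle=\{1_G\}$, so $\pi|_{G^*}$ is an isomorphism and $G^*\cong G$. If instead $\tau\in G^*$, then $\rho_k=(\rho_k\tau)\tau\in G^*$, and together with the generators $\rho_i$ ($i\neq k$) this yields $G\leq G^*$; since also $\tau\in G^*$, we obtain $G^*=\langle G,\tau\rangle\cong G\times C_2$. The only genuinely delicate point is the structural claim in the first paragraph, namely that $\langle G,\tau\rangle$ is a \emph{direct} product rather than merely a quotient of $G\times C_2$: this is where $\tau\notin G$ (forcing $G\cap\langle\tau\rangle$ trivial) and the centrality of $\tau$ (forcing the product to split as a direct one) are both essential. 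Everything after that follows formally from the projection $\pi$.
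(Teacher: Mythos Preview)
The paper does not prove this proposition itself; it is quoted from \cite[Lemma~5.4]{RankofPolAltGroups} without argument, so there is no in-paper proof to compare against. Your argument is correct and is the standard one: embed $G^*$ in $\langle G,\tau\rangle\cong G\times C_2$, project onto the first factor, and read off the two cases according to whether $\tau$ lies in $G^*$ (equivalently, in the kernel of the restricted projection).

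One small refinement: your justification that $\tau$ commutes with \emph{every} $\rho_i$ is not fully supported by the sggi axioms alone. Requiring $(G^*,S^*)$ to be an sggi forces $\tau$ to commute with $\rho_k$ (so that $\rho_k\tau$ is an involution) and with each $\rho_j$ for $|j-k|\ge 2$ (so that the commuting relations $(\rho_k\tau\cdot\rho_j)^2=1$ hold), but it imposes no constraint on $[\tau,\rho_{k\pm1}]$. In the cited source the hypothesis is that $\tau$ centralizes all of $G$, and under that hypothesis your argument goes through verbatim; the only adjustment needed is to take the centralizing condition as part of the definition of a sesqui-extension rather than to derive it from the string relations.
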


It is not true to say that all sesqui-extensions of string C-groups are string C-groups but sesqui-extensions of string C-groups with respect to their first (or last) involutory generator do satisfy the intersection property as stated in the following proposition.

\begin{proposition}\cite[Proposition 5.3]{RankofPolAltGroups}\label{extremesesqui}
Any sesqui-extension $G^*$ of a string C-group $G$ with respect to $\rho_0$ is a string C-group. 
\end{proposition}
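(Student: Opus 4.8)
The string property for $(G^*, S^*)$ holds by construction, since $(G^*,S^*)$ is already an sggi; the entire content of the statement is therefore the intersection property. Writing $\rho_0^* = \rho_0\tau$ and $\rho_i^* = \rho_i$ for $i\geq 1$, my plan is to argue by induction on the rank $r$ and to reduce the intersection property to two ``one-step'' conditions via the standard inductive criterion (see, e.g.,~\cite[Section 2E]{ARP}): if an sggi $G^* = \langle \rho_0^*,\ldots,\rho_{r-1}^*\rangle$ has facet subgroup $G^*_{r-1} = \langle \rho_0^*,\ldots,\rho_{r-2}^*\rangle$ and vertex-figure subgroup $G^*_0 = \langle \rho_1^*,\ldots,\rho_{r-1}^*\rangle$ both string C-groups, and satisfies
\[
G^*_0 \cap G^*_{r-1} = G^*_{0,r-1} = \langle \rho_1,\ldots,\rho_{r-2}\rangle,
\]
then $G^*$ is a string C-group. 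The base cases $r\leq 2$ are immediate, as the intersection property is vacuous there.

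For the two subgroup conditions I would exploit the asymmetry of extending with respect to $\rho_0$. The vertex-figure subgroup is $G^*_0 = \langle \rho_1,\ldots,\rho_{r-1}\rangle = G_0$, which involves no occurrence of $\tau$ and is a string C-group simply as a subgroup of the string C-group $G$. The facet subgroup is $G^*_{r-1} = \langle \rho_0\tau,\rho_1,\ldots,\rho_{r-2}\rangle$, which is precisely the $\rho_0$-sesqui-extension (with respect to the same $\tau$, and with $\tau\notin G_{r-1}$ because $\tau\notin G$) of the facet $G_{r-1} = \langle\rho_0,\ldots,\rho_{r-2}\rangle$ of $G$. Since $G_{r-1}$ is a string C-group of rank $r-1$, the induction hypothesis yields that $G^*_{r-1}$ is a string C-group.

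It remains to verify the intersection condition, which I regard as the crux. Because $\tau$ commutes with every generator of $G$ and $\tau\notin G$, the supergroup $\langle G,\tau\rangle$ is the direct product $G\times\langle\tau\rangle$ (in agreement with Proposition~\ref{sesqui}); let $\pi\colon G\times\langle\tau\rangle \to G$ be the projection killing $\tau$, so $\pi(\rho_0\tau)=\rho_0$ and $\pi(\rho_i)=\rho_i$ for $i\geq 1$. Take $g\in G^*_0\cap G^*_{r-1}$. On the one hand $g\in G^*_0 = G_0\leq G$ has no $\tau$-component, so $\pi(g)=g$; on the other hand $g\in G^*_{r-1}$ gives $\pi(g)\in\pi(G^*_{r-1}) = G_{r-1}$. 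Hence $g=\pi(g)\in G_0\cap G_{r-1} = \langle\rho_1,\ldots,\rho_{r-2}\rangle$ by the intersection property of $G$. As the reverse inclusion is trivial, the intersection condition holds and the induction closes.

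The essential point --- and the reason the statement is confined to $\rho_0$ (equivalently, by duality, to $\rho_{r-1}$) --- is that multiplying only the first generator by $\tau$ leaves $G^*_0$ entirely free of $\tau$, which is exactly what lets $\pi$ fix $g$ and drop it into $G$, where the intersection property of $G$ is available. Extending with respect to a middle generator $\rho_k$ would instead place $\tau$ in both the facet and the vertex-figure subgroups, and the projection argument would break down; this is consistent with the fact, noted above, that not every sesqui-extension of a string C-group is a string C-group. The only delicate points to check carefully in a full write-up are the well-definedness of $\pi$ on $G^*$ and the equality $\pi(g)=g$ for $g\in G^*_0$, both of which rest on $\tau$ being central in $\langle G,\tau\rangle$.
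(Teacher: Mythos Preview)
The paper does not give its own proof of this proposition; it is quoted verbatim from \cite[Proposition~5.3]{RankofPolAltGroups} and used as a black box. Your argument is correct and is essentially the standard proof from that reference: the inductive reduction via Proposition~\ref{breakinparts}, together with the projection $\pi\colon G\times\langle\tau\rangle\to G$ to handle the intersection $G^*_0\cap G^*_{r-1}$, is exactly the intended mechanism, and your observation that $G^*_0=G_0$ sits inside $G$ (so $\pi$ fixes it pointwise) is the crux. One small remark: the paper's restatement of the definition of sesqui-extension omits the hypothesis that $\tau$ centralises all the $\rho_i$, but this is present in the original source and is implicit here (it is needed both for $(G^*,S^*)$ to be an sggi and for Proposition~\ref{sesqui} to hold), so your use of $\langle G,\tau\rangle = G\times\langle\tau\rangle$ is legitimate.
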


The following proposition is very useful to check if an sggi is a string C-group.
\begin{proposition}\cite[Proposition 2E16(a)]{ARP}\label{breakinparts}
An sggi $(G,\{\rho_0,...,\rho_{r-1}\})$ is a string C-group (that is if it satisfies the intersection property) if and only if 
\noindent  \begin{itemize}
 \item[$\bullet$] $(G_{0},\{\rho_1,...,\rho_{r-1}\})$ is a string C-group,
 \item[$\bullet$] $(G_{r-1},\{\rho_0,...,\rho_{r-2}\})$ is a string C-group, and
 \item[$\bullet$] $G_{0}\cap G_{r-1} = G_{0,r-1}:=\langle\rho_1,...,\rho_{r-2}\rangle$.
\end{itemize}

\end{proposition}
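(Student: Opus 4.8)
Since this is a classical structural result (it is Proposition~2E16(a) of~\cite{ARP}), the task is to reconstruct its proof. The ``only if'' direction is pure bookkeeping: if $(G,S)$ satisfies the intersection property then so does every section $G_K=\langle\rho_i:i\in K\rangle$ — just apply (IP) of $G$ to subsets of $K$ — so in particular $(G_0,\{\rho_1,\dots,\rho_{r-1}\})$ and $(G_{r-1},\{\rho_0,\dots,\rho_{r-2}\})$ are string C-groups, and the third bullet is literally the instance of (IP) with $I=\{1,\dots,r-1\}$ and $J=\{0,\dots,r-2\}$.

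For the ``if'' direction I would assume the three bullets and induct on $r$, the cases $r\le 2$ being immediate (for $r=2$ the only nontrivial instance of (IP) is $\langle\rho_1\rangle\cap\langle\rho_0\rangle=1$, which is the third bullet). One must show $G_I\cap G_J=G_{I\cap J}$ for all $I,J\subseteq\{0,\dots,r-1\}$, the inclusion $\supseteq$ being automatic, and the routine cases are handled by ``intersecting inside $G_0$ and inside $G_{r-1}$'' together with the hypotheses. If an index $t$ is absent from $I\cup J$ then $G_I$ and $G_J$ lie in the section on $\{0,\dots,r-1\}\setminus\{t\}$; for $t\in\{0,r-1\}$ this is $G_0$ or $G_{r-1}$ and we are done, while for $0<t<r-1$ the string property exhibits that section as a direct product $G_{\{0,\dots,t-1\}}\times G_{\{t+1,\dots,r-1\}}$ — the factors commute, and their intersection is trivial, as one sees by first intersecting $G_{\{0,\dots,t-1\}}$ with $G_{0,r-1}=G_0\cap G_{r-1}$ inside the string C-group $G_{r-1}$ and then intersecting the result with $G_{\{t+1,\dots,r-1\}}$ inside the string C-group $G_0$ — and a concatenated direct product of string C-groups is again one. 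If instead $I\cup J=\{0,\dots,r-1\}$, one peels off an endpoint: if $0$ lies in only one of $I,J$, then the other is contained in $G_0$ and the claim reduces to the sub-statement $G_0\cap G_{K\cup\{0\}}=G_K$ for $K\subseteq\{1,\dots,r-1\}$, which for $r-1\notin K$ follows once more by intersecting in $G_{r-1}$ and then in $G_0$; symmetrically for $r-1$.

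The real content is concentrated in that last sub-statement in the case where $K$ also contains $r-1$ — equivalently, in establishing $G_I\cap G_J=G_{I\cap J}$ when $I\cup J=\{0,\dots,r-1\}$ and $\{0,r-1\}\subseteq I\cap J$, so that neither endpoint can be peeled off for free. Here the plan is to take $g\in G_I\cap G_J$, express it both as a word in the generators of $G_I$ and as a word in those of $G_J$, and use the string property to commute the occurrences of $\rho_0$ (which commute with every $\rho_i$, $i\ge 2$) into a controlled position; matching the two expressions and invoking $G_0\cap G_{r-1}=G_{0,r-1}$ together with the inductive hypothesis (applied to the index sets obtained by deleting $0$) should force $g\in G_{I\cap J}$. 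The obstacle — and the step that has to be set up with care — is precisely this commuting-and-matching argument, because $\rho_0$ does \emph{not} commute with $\rho_1$, so the ``$\rho_0$-part'' tangles with the dihedral group $\langle\rho_0,\rho_1\rangle$; getting the reduction to close uniformly is likely to need a secondary induction (on word length or on $|I|+|J|$) and the standing non-degeneracy convention that $\rho_0\notin G_0$ and $\rho_{r-1}\notin G_{r-1}$.
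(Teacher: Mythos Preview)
The paper does not prove this proposition; it is quoted from~\cite{ARP} without argument, so there is no ``paper's proof'' to compare against. Your reconstruction is correct in its set-up and in the routine reductions, but it is not a complete proof. In the final paragraph you isolate the crux --- essentially the sub-claim $G_0\cap G_J=G_{J\setminus\{0\}}$ when both $0$ and $r-1$ lie in $J$ --- and then stop, offering only a plan (commute occurrences of $\rho_0$ through words, match two expressions) together with the hedges ``should force'' and ``is likely to need a secondary induction''. That is a description of a hoped-for argument, not an argument; and in fact this syntactic route does not work cleanly, precisely for the reason you name: $\rho_0$ fails to commute with $\rho_1$, so there is no well-defined ``$\rho_0$-part'' of an element of $G_J$ to isolate and match.

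The standard way to close the gap is structural rather than syntactic. For the sub-claim $G_J\cap G_0=G_{J\setminus\{0\}}$ with $0,r-1\in J\subsetneq\{0,\dots,r-1\}$, pick any $l\notin J$ (so $0<l<r-1$) and use the string property to factor $G_J=G_{J_-}G_{J_+}$ with $J_-=J\cap[0,l-1]$ and $J_+=J\cap[l+1,r-1]$. If $g=g_-g_+\in G_J\cap G_0$ then $g_+\in G_{J_+}\le G_0$, hence $g_-=gg_+^{-1}\in G_0$; but also $g_-\in G_{J_-}\le G_{r-1}$, so $g_-\in G_0\cap G_{r-1}=G_{0,r-1}$, and the intersection property of $G_{r-1}$ forces $g_-\in G_{J_-\setminus\{0\}}$. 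Thus $g\in G_{J\setminus\{0\}}$. Together with the symmetric statement for $G_{r-1}$, this lets you peel \emph{any} single index $k$ off $G_J$ (for $0<k<r-1$, factor $G_k$ itself across the gap at $k$ and reduce to the endpoint cases just established), and iterating over the indices missing from $K$ gives $G_J\cap G_K=G_{J\cap K}$ in general. No word-length induction and no non-degeneracy convention are needed.
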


\subsection{Permutation representation graphs and CPR graphs}

Let $(G, \Omega)$ (with $\Omega := \{1,...,n\}$) be a permutation group generated by $r$ involutions $\rho_0,\rho_1,...,\rho_{r-1}$.
We define the {\em permutation representation graph} of $G$ to be the edge-labeled undirected multigraph $\mathcal{G}$ with $\Omega$ as vertex set and an edge with label $i$ between vertices $a$ and $b$ whenever $a\neq b$ and $\rho_i(a)=b$ (in which case, of course, $\rho_i(b)=a$).

When $(G,\{\rho_0,\rho_1,...,\rho_{r-1}\})$
is a string C-group, $\mathcal{G}$ is called a {\em CPR graph} as in~\cite{CPR}.

The following lemma gives the possible shapes of connected components of subgraphs of a CPR graph when looking at pairs of labels that are not consecutive.

\begin{lemma}\cite[Proposition 3.5]{CPR}\label{cprlem}
Each connected component of the subgraph of a CPR graph $\mathcal{G}$ induced by edges of labels $i$ and $j$ for $\vert i-j\vert \geq 2$ (i.e. by edges of labels $i$ and $j$ where $\rho_i$ and $\rho_j$ commute) is either a single vertex, a single edge, a double edge or an alternating square.
\end{lemma}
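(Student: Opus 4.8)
The plan is to identify the connected components of the induced subgraph with the orbits of the subgroup $H := \langle \rho_i, \rho_j\rangle$ acting on $\Omega$, and then to exploit the fact that $H$ is small because $\rho_i$ and $\rho_j$ commute. First I would observe that, since each of $\rho_i$ and $\rho_j$ is an involution, every vertex is incident to at most one edge of label $i$ and at most one edge of label $j$; moreover, starting from a vertex $a$, the set of vertices reachable along $i$- and $j$-labelled edges is exactly $\{h(a) : h \in H\}$, so the vertex set of each connected component is precisely an $H$-orbit (an isolated vertex being an orbit of size $1$).

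Next I would bound the orbit sizes. Because $\lvert i-j\rvert \geq 2$, the string property (SP) gives $(\rho_i\rho_j)^2 = 1_G$, that is, $\rho_i$ and $\rho_j$ commute. Hence $H$ is abelian and generated by two involutions, so $H$ is isomorphic to a quotient of $C_2 \times C_2$; in particular $\lvert H\rvert \in \{1,2,4\}$. By the orbit--stabiliser theorem every orbit has size dividing $\lvert H\rvert$, hence size $1$, $2$, or $4$.

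It then remains to read off the shape of the component from the orbit size. An orbit $\{a\}$ of size $1$ means $\rho_i(a)=\rho_j(a)=a$, giving a single vertex. For an orbit $\{a,b\}$ of size $2$, exactly one of three configurations occurs: $\rho_i$ swaps $a,b$ while $\rho_j$ fixes them (a single edge of label $i$), $\rho_j$ swaps $a,b$ while $\rho_i$ fixes them (a single edge of label $j$), or both swap $a,b$ (a double edge); the case where both fix $a,b$ is excluded, as it would force orbit size $1$. For an orbit of size $4$ the action of $H$ is regular, so the four points are $a,\ \rho_i(a),\ \rho_j(a),\ \rho_i\rho_j(a)$, all distinct, and setting $b:=\rho_i(a)$, $c:=\rho_j(a)$, $d:=\rho_i\rho_j(a)$ one checks using commutativity that $d = \rho_j(b) = \rho_i(c)$, so the four edges $ab$, $bd$, $dc$, $ca$ carry labels $i,j,i,j$ in cyclic order, i.e.\ an alternating square.

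The step requiring the most care is the size-$4$ case, where I must confirm that the commutation relation not only closes the orbit into a $4$-cycle but also forces the labels to alternate and forbids any multiple edge: I need $\rho_i(a)\neq\rho_j(a)$ (equivalently $b\neq c$) and that $d$ is genuinely distinct from $a,b,c$, both of which follow from regularity of the $H$-action on an orbit of full size $4$. Establishing $d = \rho_j(b) = \rho_i(c)$ is precisely where the hypothesis $\lvert i-j\rvert\geq 2$ enters, and it is this relation that rules out the long paths or long cycles that a non-commuting pair of involutions could otherwise produce.
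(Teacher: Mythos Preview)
Your argument is correct and is essentially the standard one: components of the $\{i,j\}$-subgraph are $\langle\rho_i,\rho_j\rangle$-orbits, commutativity forces this group to be a quotient of $C_2\times C_2$, and the orbit--stabiliser theorem then limits the possible shapes to the four listed. The paper does not supply its own proof but simply cites \cite[Proposition~3.5]{CPR}; its only added remark is that the argument needs nothing more than the sggi axioms, which your write-up confirms, since you invoke only the string property (SP) and the fact that the generators are involutions, never the intersection property.
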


Observe that the proof of this lemma only requires $\mathcal G$ to be the permutation representation graph of an sggi.
Trivially, two generators commute unless their corresponding edges are adjacent.

Let us also notice that since $S_n$ is a transitive group in its natural action, any representation of $S_n \curvearrowright \{1,...,n\}$ by a permutation representation graph will be connected. Here we use $G\curvearrowright S$ to denote the action of a group $G$ on a set $S$.

Let $\mathcal{G}$ be the permutation representation of a string group generated by involutions $(G,\{\rho_0,\rho_1,...,\rho_{r-1}\})$ seen as a permutation group acting on a set $\Omega:=\{1, \ldots, n\}$. 
For $i,j\in\{0,1,...,r-1\}$ we define $\mathcal{G}_{i,j}$ to be the subgraph of $\mathcal{G}$ with vertex set $\Omega$ and all the edges in $\mathcal{G}$ except the ones of labels $i$ and $j$.

With the notations introduced in the previous section, we observe that $\mathcal{G}_{i,j}$ is the permutation representation graph for $G_{i,j}$ and that its connected components are the orbits of $G_{i,j}\curvearrowright \Omega$.

\subsection{Fracture graphs and perfect splits}

The notion of fracture graphs first appears in \cite{extSn}. For permutation groups (or permutation representation graphs) with a fracture graph, Cameron, Fernandes and Leemans then introduce the notion of splits and, in particular, perfect splits, in \cite{woof}.

Let $G=\langle \rho_0,\rho_1,...,\rho_{r-1}\rangle$ be an sggi that acts  faithfully on a set  $\{1,...,n\}$. We say that $G$ (or its permutation representation graph) \textit{has a fracture graph} if, for every $i\in\{0,1,...,r-1\}$, the subgroup $G_i$ has at least one more orbit than $G$ on $\{1,...,n\}$. Indeed, choosing, for each $i\in\{0,1,...,r-1\}$, two points $a_i$ and $b_i$ of $\{1,...,n\}$ permuted by $\rho_i$ and in different $G_i$-orbits, one obtains a transposition $(a_i,b_i)$ of $\rho_i$ and an edge $\{a_i,b_i\}$ of the permutation representation graph $\mathcal{G}$ of $G$. The graph with vertex set $\{1,...,n\}$ and edge set $\{\{a_i,b_i\}\vert i\in\{0,1,...,r-1\}\}$ is a spanning forest of $\mathcal{G}$ that we call \textit{fracture graph} of $G$ (or of $\mathcal{G}$).

Suppose that $G$ (or $\mathcal{G}$) has a fracture graph. Suppose also that, for some $i\in\{0,1,...,r-1\}$,  $G_i$ has exactly one more orbit than $G$ on $\{1,...,n\}$.  Let $\{\mathcal{O}_1,\mathcal{O}_2\}$ be a partition of $\{1,...,n\}$ such that $\rho_i$ is the unique generator of $G$ that swaps elements of $\mathcal{O}_1$ with elements of $\mathcal{O}_2$ and suppose that there is exactly one pair of points $(a,b)\in\mathcal{O}_1\times\mathcal{O}_2$ such that $\rho_i(a)=b$. Then we say that the edge $\{a,b\}$ of $\mathcal{G}$ is an \textit{$i$-split} of $G$ (or $\mathcal{G}$).

In this case, for all $j\neq i$, one can write $\rho_j$ as $\alpha_j\beta_j$ where $\alpha_j$ fixes $\mathcal{O}_2$ pointwise and $\beta_j$ fixes $\mathcal{O}_1$ pointwise. Similarly, $\rho_i=\alpha_i\beta_i(a,b)$ where $\alpha_i$ fixes $\mathcal{O}_2$ pointwise and $\beta_i$ fixes $\mathcal{O}_1$ pointwise. We can define $J_A:=\{j\in\{0,1,r-1\}\setminus\{i\}\vert \alpha_j\neq 1_{G}\}$ and similarly $J_B:=\{j\in\{0,1,r-1\}\setminus\{i\}\vert \beta_j\neq 1_{G}\}$ so that $A:=\langle\alpha_j\vert j\in J_A\rangle$ is the group induced by $G_i$ on $\mathcal{O}_1$ and $B:=\langle\beta_j\vert j\in J_B\rangle$ is the group induced by  $G_i$ on $\mathcal{O}_2$.

If no $j\in\{i+1,...,r-1\}$ is in $J_A$ and no $j\in\{0,1,...,i-1\}$ is in $J_B$ then we say that the $i$-split $\{a,b\}$ is \textit{perfect}.

\begin{proposition}\label{perfectsplit}\cite[Proposition 5.1]{woof}
If $G$ is transitive and has a perfect split then $G$ is primitive.
\end{proposition}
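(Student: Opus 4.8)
The plan is to argue by contradiction, exploiting the asymmetry between low- and high-index generators that perfectness forces. Write $\mathcal{O}_1,\mathcal{O}_2$ for the two parts of the split (so $a\in\mathcal{O}_1$, $b\in\mathcal{O}_2$). The $i$-split hypothesis includes that $G_i$ has exactly one more orbit than $G$, so, $G$ being transitive, $G_i$ has exactly two orbits; as $\rho_i$ is the only generator moving points across $\{\mathcal{O}_1,\mathcal{O}_2\}$, these orbits are precisely $\mathcal{O}_1$ and $\mathcal{O}_2$. The first substantive step is to read off what perfectness says about the generators: $J_A\subseteq\{0,\dots,i-1\}$ means $\alpha_j=1_G$ for every $j>i$, so each such $\rho_j$ fixes $\mathcal{O}_1$ pointwise, and symmetrically $J_B\subseteq\{i+1,\dots,r-1\}$ gives $\rho_j$ fixing $\mathcal{O}_2$ pointwise for every $j<i$. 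Hence $P:=\langle\rho_0,\dots,\rho_{i-1}\rangle$ fixes $\mathcal{O}_2$ pointwise and acts on $\mathcal{O}_1$ exactly as $G_i$ does, namely as the group $A$, which is transitive on $\mathcal{O}_1$ since $\mathcal{O}_1$ is a $G_i$-orbit; symmetrically $Q:=\langle\rho_{i+1},\dots,\rho_{r-1}\rangle$ fixes $\mathcal{O}_1$ pointwise and acts transitively (as $B$) on $\mathcal{O}_2$.

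Next I would prove a dichotomy for an arbitrary $G$-invariant block system $\Sigma$: every block lies in $\mathcal{O}_1$, lies in $\mathcal{O}_2$, or equals the whole point set $\Omega$. Indeed, if a block $\mathcal{C}\in\Sigma$ meets both $\mathcal{O}_1$ and $\mathcal{O}_2$, then picking $y\in\mathcal{C}\cap\mathcal{O}_2$, every element of $P$ fixes $y$ and hence maps $\mathcal{C}$ to a block meeting $\mathcal{C}$; two blocks of $\Sigma$ that meet are equal, so $P$ stabilises $\mathcal{C}$ setwise, and transitivity of $P$ on $\mathcal{O}_1$ then gives $\mathcal{O}_1\subseteq\mathcal{C}$. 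The symmetric argument with $Q$ gives $\mathcal{O}_2\subseteq\mathcal{C}$, so $\mathcal{C}=\Omega$; since $\Omega=\mathcal{O}_1\cup\mathcal{O}_2$ this establishes the dichotomy.

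To conclude, suppose $\Sigma$ is nontrivial. By transitivity I may choose the block $\mathcal{B}\in\Sigma$ with $a\in\mathcal{B}$; the dichotomy and $a\in\mathcal{O}_1$ force $\mathcal{B}\subseteq\mathcal{O}_1$, and as $|\mathcal{B}|\geq 2$ there is $a'\in\mathcal{B}$ with $a'\neq a$. Because $a$ is the \emph{unique} point of $\mathcal{O}_1$ that $\rho_i$ sends into $\mathcal{O}_2$, we get $\rho_i(a')\in\mathcal{O}_1$ while $\rho_i(a)=b\in\mathcal{O}_2$, so the block $\rho_i(\mathcal{B})$ meets both $\mathcal{O}_1$ and $\mathcal{O}_2$; but $|\rho_i(\mathcal{B})|=|\mathcal{B}|<n$, so $\rho_i(\mathcal{B})\neq\Omega$, contradicting the dichotomy. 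Hence no nontrivial block system exists and $G$ is primitive.

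The main work is the first paragraph: distilling from "perfect split" the clean picture in which $P$ and $Q$ realise the transitive actions on $\mathcal{O}_1$ and $\mathcal{O}_2$ respectively, each fixing the other part pointwise. Once that is in place, the block-system argument is short and essentially formal. The only slightly delicate point is the range $i\in\{0,r-1\}$, where $P$ or $Q$ is trivial and the associated part of the split is a single point; this requires no separate treatment, since a nontrivial block contained in $\mathcal{O}_1$ already forces $|\mathcal{O}_1|\geq 2$, and the argument runs verbatim.
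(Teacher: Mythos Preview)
Your proof is correct. Note, however, that the paper does not give its own proof of this proposition: it is quoted as \cite[Proposition~5.1]{woof} and used as a black box, so there is nothing in the present paper to compare your argument against.

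As for the argument itself, the logic is sound. The key reduction in your first paragraph is accurate: perfectness forces $\alpha_j=1$ for $j>i$ and $\beta_j=1$ for $j<i$, so $P=\langle\rho_0,\dots,\rho_{i-1}\rangle$ fixes $\mathcal{O}_2$ pointwise and induces $A$ on $\mathcal{O}_1$, while $Q=\langle\rho_{i+1},\dots,\rho_{r-1}\rangle$ fixes $\mathcal{O}_1$ pointwise and induces $B$ on $\mathcal{O}_2$. The identification of $\mathcal{O}_1,\mathcal{O}_2$ with the two $G_i$-orbits (hence transitivity of $A$ and $B$) is justified exactly as you say. The dichotomy for blocks then follows, and the final step uses correctly that the $i$-split hypothesis guarantees $(a,b)$ is the \emph{only} transposition of $\rho_i$ crossing $\mathcal{O}_1\times\mathcal{O}_2$, so $\rho_i(a')\in\mathcal{O}_1$ for every $a'\in\mathcal{O}_1\setminus\{a\}$. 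One cosmetic remark: in the boundary case $i\in\{0,r-1\}$ the contradiction actually arises one line earlier than in the generic case (you deduce $\mathcal{B}\subseteq\mathcal{O}_1$ with $|\mathcal{B}|\geq 2$, but $|\mathcal{O}_1|=1$), rather than at the $\rho_i(\mathcal{B})$ step; your closing comment acknowledges this, and no further argument is needed.
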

\subsection{Primitive groups}
The following lemma gives a criterion for a primitive permutation group to be isomorphic to a symmetric group.
\begin{lemma}\label{showsym}
    Let $G$ be a primitive permutation group of finite degree $n$ containing a transposition fixing at least three points. Then $G\cong S_n$.
\end{lemma}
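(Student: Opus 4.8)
The plan is to invoke the classical result of Jordan on primitive groups containing a transposition. Recall that a classical theorem (due essentially to Jordan, with refinements by Wielandt) states that a primitive permutation group of finite degree $n$ containing a transposition must be the full symmetric group $S_n$. The statement of Lemma~\ref{showsym} is actually a slight specialization of this: we additionally assume the transposition fixes at least three points, which in particular forces $n \geq 5$. So the cleanest route is to cite Jordan's theorem directly.

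If instead one wants a self-contained argument, I would argue as follows. Let $\tau = (a\ b) \in G$ be the transposition. Consider the orbits of the point stabilizer or, more efficiently, use the graph-theoretic formulation: define a graph $\Gamma$ on the $n$ points whose edges are the supports of all $G$-conjugates of $\tau$, i.e. join $x$ to $y$ whenever $(x\ y) \in G$. Since $G$ is transitive and $\tau$ has a nonempty support, $\Gamma$ has at least one edge, and because $G$ acts on $\Gamma$ as automorphisms and is transitive on vertices, $\Gamma$ is vertex-transitive, hence every vertex has the same positive degree and $\Gamma$ has no isolated vertices. The key observation is that the relation ``$x = y$ or $(x\ y)\in G$'' — equivalently, being in the same connected component of $\Gamma$ — is a $G$-invariant equivalence relation (transitivity of the component relation is immediate; $G$-invariance is because $G$ permutes the conjugates of $\tau$). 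By primitivity this equivalence relation is trivial; since $\Gamma$ has an edge it cannot be the all-singletons relation, so $\Gamma$ is connected, meaning the set of all transpositions in $G$ generates a transitive subgroup on $\{1,\dots,n\}$.

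Now I would finish by showing $\Gamma$ is in fact the complete graph. The transpositions lying in $G$ that share a common point, say all $(a\ x)\in G$, together with the component-connectivity just established, let one build up products: if $(a\ b), (b\ c) \in G$ then $(a\ b)(b\ c)(a\ b) = (a\ c) \in G$, so the neighbours of $a$ in $\Gamma$ form, together with $a$, a clique — i.e. $\Gamma$ is a disjoint union of cliques. But $\Gamma$ is connected, so $\Gamma = K_n$, whence every transposition of $\mathrm{Sym}(\{1,\dots,n\})$ lies in $G$, and therefore $G = S_n$. Here is where the hypothesis that $\tau$ fixes at least three points is genuinely needed and should be used with care: it guarantees $n\ge 5$ and rules out the small-degree primitive groups (such as subgroups of $S_4$) where the ``union of cliques'' argument could degenerate; one should note that for $n\le 4$ a transposition fixes at most two points, so the hypothesis excludes exactly those degenerate cases. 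The main obstacle, and the only place real work hides, is justifying that the component relation of $\Gamma$ is a congruence and handling the blocks correctly — everything else is the elementary transposition-conjugation computation above. In the write-up I would simply cite Jordan's theorem (see e.g. Wielandt, \emph{Finite Permutation Groups}, or Dixon--Mortimer, \emph{Permutation Groups}, Theorem~3.3A) and remark that the extra fixed-point hypothesis merely ensures $n\geq 5$.
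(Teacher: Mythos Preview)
Your argument is correct, and both routes you outline---citing Jordan's theorem directly, or giving the self-contained ``transposition graph'' proof---are valid. However, the approach differs from the paper's. The paper does not invoke Jordan's theorem on transpositions; instead it cites the more general result (Dixon--Mortimer, Theorem~3.3E, together with Jones~\cite{jones2014primitive}) that a primitive group of degree $n$ containing a cycle of \emph{prime} length fixing at least three points contains $A_n$. Applying this with prime length $2$ gives $G\geq A_n$, and then the presence of an odd permutation (the transposition) forces $G=S_n$. So the paper's route is: general prime-cycle theorem $\Rightarrow$ $A_n\leq G$ $\Rightarrow$ $G=S_n$, whereas yours is the classical Jordan argument specific to transpositions. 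Your approach is more elementary and self-contained; the paper's has the advantage of quoting a result they presumably want in the reader's mind anyway, and it explains why the ``at least three fixed points'' hypothesis appears in the statement: it is a hypothesis of the cited theorem.

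One correction to your commentary: your claim that the three-fixed-points hypothesis is ``genuinely needed'' in the clique argument is not right. The conjugation identity $(a\,b)(b\,c)(a\,b)=(a\,c)$ and the block argument work for all $n\geq 2$; nothing degenerates for $n\leq 4$. Jordan's theorem that a primitive group containing a transposition equals $S_n$ holds without any fixed-point condition. In your write-up you should either drop that remark or say honestly that the hypothesis is not needed for this route---it is an artifact of the paper phrasing the lemma so as to match the prime-cycle theorem it cites.
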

\begin{proof}
\cite[Theorem 3.3E]{dixon1996permutation} and \cite[Theorem 1.1]{jones2014primitive} state that any primitive permutation group of finite degree $n$ containing a cycle of prime length fixing at least three points contains $A_n$ as a subgroup. Since a transposition is a $2$-cycle, $G\geq A_n$ and, in particular, since $G$ contains a transposition, $G\cong S_n$.
\end{proof}
\begin{theorem}\label{wreath}\cite[Theorem 1.8]{cameron1999permutation}
    Let $G$ be a primitive permutation group on $\Omega$. Let $\mathcal{B}=\{\mathcal{B}_1,...,\mathcal{B}_n\}$ be a complete system of imprimitivity for $G\curvearrowright\Omega$. Let $H$ be the group induced on $\mathcal{B}_1$ by its setwise stabilizer and let $K$ be the group induced on $\mathcal{B}$ by $G$. Then $G\leq H\wr K$.
\end{theorem}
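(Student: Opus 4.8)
Although the statement is phrased for primitive $G$, primitivity plays no role in the conclusion: the embedding holds for any transitive group, and a primitive group is in particular transitive. So the plan is to prove the embedding for transitive $G$ by writing down an explicit injective homomorphism $\Psi\colon G\to H\wr K$ into the imprimitive wreath product and checking that it is a well-defined monomorphism.

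First I would set up coordinates. Since $G$ is transitive on $\Omega$ and preserves the block system $\mathcal{B}$, all blocks have a common size and $K:=\phi(G)$ is transitive, where $\phi\colon G\to\mathrm{Sym}(\mathcal{B})$ denotes the action on blocks; I write $k(i)=j$ when $g(\mathcal{B}_i)=\mathcal{B}_j$ for $k=\phi(g)$. Choosing a transversal, i.e.\ elements $g_1,\dots,g_n\in G$ with $g_1=1_G$ and $g_i(\mathcal{B}_1)=\mathcal{B}_i$ (which exist by transitivity of $K$), each $g_i$ restricts to a bijection $\mathcal{B}_1\to\mathcal{B}_i$, and this identifies $\Omega$ with $\mathcal{B}_1\times\{1,\dots,n\}$, with $\mathcal{B}_1$ playing the role of the base block on which $H$ acts.

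Next I would define $\Psi$. For $g\in G$ put $k:=\phi(g)$ and, for each $i$, $h_i:=\bigl(g_{k(i)}^{-1}\,g\,g_i\bigr)\big|_{\mathcal{B}_1}$. Because $g_i$ sends $\mathcal{B}_1$ to $\mathcal{B}_i$, $g$ sends $\mathcal{B}_i$ to $\mathcal{B}_{k(i)}$, and $g_{k(i)}^{-1}$ sends $\mathcal{B}_{k(i)}$ back to $\mathcal{B}_1$, the element $g_{k(i)}^{-1}\,g\,g_i$ stabilises $\mathcal{B}_1$ setwise, so its restriction $h_i$ is a legitimate element of $H$. I then set $\Psi(g):=\bigl((h_1,\dots,h_n);k\bigr)$.

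The main obstacle---and the only genuine computation---is to verify that $\Psi$ is a homomorphism. Writing $k=\phi(g)$ and $k'=\phi(g')$, so that $\phi(gg')=kk'$, the key is the telescoping identity
\[
g_{k(k'(i))}^{-1}\,(gg')\,g_i \;=\; \bigl(g_{k(k'(i))}^{-1}\,g\,g_{k'(i)}\bigr)\bigl(g_{k'(i)}^{-1}\,g'\,g_i\bigr),
\]
obtained by inserting $g_{k'(i)}\,g_{k'(i)}^{-1}=1_G$. Both factors on the right stabilise $\mathcal{B}_1$ setwise, so restricting to $\mathcal{B}_1$---where restriction is multiplicative on the setwise stabiliser---turns this into $h_i^{(gg')}=h_{k'(i)}^{(g)}\,h_i^{(g')}$, which is precisely the rule for multiplying base coordinates in $H\wr K$ under the convention that $\bigl((h_j);k\bigr)$ acts by $(\delta,i)\mapsto(h_i(\delta),k(i))$. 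Hence $\Psi(g)\Psi(g')=\Psi(gg')$. Finally I would check injectivity: if $\Psi(g)$ is the identity then $k=1$, so $g$ fixes every block setwise, and each $h_i=\bigl(g_i^{-1}g\,g_i\bigr)\big|_{\mathcal{B}_1}=1$ forces $g$ to fix $\mathcal{B}_i=g_i(\mathcal{B}_1)$ pointwise; as the blocks cover $\Omega$ and $G$ acts faithfully, $g=1_G$. Therefore $G\cong\Psi(G)\leq H\wr K$.
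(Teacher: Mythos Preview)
The paper does not supply its own proof of this statement: it is simply quoted from Cameron's textbook, so there is nothing to compare against. Your argument is the standard one found in that reference---choose a transversal for the block action, use it to coordinatise $\Omega$ as $\mathcal{B}_1\times\{1,\dots,n\}$, and read off the base and top components of each $g$---and it is correct as written. You were also right to flag that the hypothesis ``primitive'' is a slip (a primitive group has no nontrivial block system, so the statement would be vacuous as written); the paper itself applies the theorem only to imprimitive transitive groups, and your proof covers exactly that case.
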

\begin{comment}
In sections \ref{RRT3-4} and \ref{proof}, we will make use of some extra features when representing permutation representation graphs in order to clarify our arguments:
\begin{itemize}
\item A vertex, usually represented as $\circ$ will be filled in black when we represent a subgraph of a permutation representation graph and wish to emphasize that this particular vertex has no other incident edge than the ones appearing on the picture;
\item An edge will be crossed out when we wish to emphasize that its appearance leads to a contradiction.
\end{itemize}
\end{comment}

\section{A successful gluing method}\label{works}

The main point of this section is to prove Theorem~\ref{taping}.
The permutation representation graphs that can be obtained by applying the gluing procedure described in Theorem \ref{taping} can only represent some groups closely related to symmetric groups. Let us first give a lemma to clarify the matter.

\begin{lemma}\label{howact1}
    Let $G=\langle \rho_0,\rho_1,...,\rho_{r-1}\rangle$ be a  string group generated by involutions represented by a permutation representation graph $\mathcal{G}$ of the following shape

\begin{center}
\begin{tikzpicture}
\vertex[] (1) at (0,0) {};
\vertex[] (2) at (1,0) {};
\node[draw, left=-6pt of 1, shape=ellipse, style={dashed, minimum width=86pt, minimum height=43pt}] (6) {$\mathcal{G}'$};
\node[draw, right=-6pt of 2, shape=ellipse, style={dashed, minimum width=86pt, minimum height=43pt}] (7) {$\mathcal{G}''$};
\draw[-] (1) -- (2)
 node[pos=0.5,above] {$i$};
\end{tikzpicture}
\end{center}

\noindent where $\mathcal{G}'$ and $\mathcal{G}''$ are subgraphs of $\mathcal{G}$ whose edges (if there are any) are all of labels respectively at most $i-1$ and at least $i+1$.

\noindent If $\mathcal{G}$ is connected then $G$ is isomorphic to $S_n$ where $n$ is the size of the vertex set of $\mathcal{G}$.

\noindent If $\mathcal{G}$ is disconnected and $H$ is the group induced by the action of $G$ on the vertices of its connected components containing no $i$-edge then $G\cong S_t \times H$ where $t$ is the number of vertices in the connected component of $\mathcal{G}$ that contains the only $i$-edge.
\end{lemma}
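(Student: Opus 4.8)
The plan is to isolate the subgroup structure forced by the single $i$-edge and to invoke Lemma~\ref{showsym} to identify the relevant factor as a full symmetric group. First I would treat the connected case. Write $\mathcal{G}'$ for the left part and $\mathcal{G}''$ for the right part, and let $a$ and $b$ be the two endpoints of the $i$-edge, with $a\in\mathcal{G}'$ and $b\in\mathcal{G}''$. Because every label in $\mathcal{G}'$ is at most $i-1$ and every label in $\mathcal{G}''$ is at least $i+1$, each of $\rho_0,\dots,\rho_{i-1}$ fixes every vertex of $\mathcal{G}''$ and each of $\rho_{i+1},\dots,\rho_{r-1}$ fixes every vertex of $\mathcal{G}'$; moreover $\rho_i$ moves only $a$ and $b$. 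Consequently the transposition $(a\,b)=\rho_i\cdot(\text{the part of }\rho_i\text{ acting elsewhere})$, but in fact $\rho_i=(a\,b)$ on the whole vertex set, so $(a\,b)\in G$. Now I would argue that $G$ is primitive: suppose $\mathcal{B}$ is a nontrivial block system. Since $\langle\rho_0,\dots,\rho_{i-1}\rangle$ acts on $\mathcal{G}'$ and $\langle\rho_{i+1},\dots,\rho_{r-1}\rangle$ acts on $\mathcal{G}''$, connectedness of $\mathcal{G}$ comes entirely through the edge $\{a,b\}$, which means that the only way $G$ glues the two sides is via the transposition $(a\,b)$; one then checks that a block containing $a$ must either be $\{a\}$ or contain all of $\{1,\dots,n\}$, using that the stabiliser of the side $V(\mathcal{G}')$ acts with the stabiliser-of-$a$ orbits refining any block. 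The cleanest route is probably: $(a\,b)$ together with the transitivity of $G$ forces, by a standard argument, that $G$ has no nontrivial blocks — if $a$ lies in a block $\Delta$ with $|\Delta|\geq 2$, pick $c\in\Delta\setminus\{a\}$; transitivity gives $g\in G$ with $g(a)=c$, and then $(a\,b)$ and its conjugates generate enough to show $\Delta=\{1,\dots,n\}$. Having primitivity, $G$ contains the transposition $(a\,b)$, which fixes $n-2\geq 3$ points as soon as $n\geq 5$ (the small cases $n\leq 4$ being handled by inspection), so Lemma~\ref{showsym} gives $G\cong S_n$.

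For the disconnected case, let $C$ be the connected component of $\mathcal{G}$ containing the $i$-edge, with $|V(C)|=t$, and let $D$ be the union of the remaining components. By the label restrictions above, $\rho_i$ acts trivially on $D$, and the two endpoints $a,b$ of the $i$-edge both lie in $C$. I would first apply the connected case (or rather its argument) to $C$: the induced action of $G$ on $V(C)$ is generated by the restrictions of the $\rho_j$, it is connected with a single $i$-edge of the required form, hence the group induced on $V(C)$ is $S_t$. On $V(D)$ the group induced is by definition $H$. It remains to show $G$ is the direct product of these two induced groups, i.e. that $G\hookrightarrow S_t\times H$ is onto. For surjectivity I would use that the $S_t$-factor already contains the transposition $(a\,b)$ coming from $\rho_i$, while $\rho_i$ is trivial on $D$; so $(\,(a\,b),\,1_H\,)\in G$. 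Conjugating this transposition by elements of $G$ — which permute $V(C)$ as $S_t$ and $V(D)$ as $H$ — and using that $S_t$ is generated by transpositions, one recovers $S_t\times\{1_H\}\leq G$. Quotienting out this normal subgroup, the image of $G$ in $H$ is all of $H$ by definition of $H$, so $G=S_t\times H$.

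The main obstacle I anticipate is the primitivity argument in the connected case: one has to rule out nontrivial block systems cleanly, and the temptation is to wave hands at "the only connection is through the edge". The honest version uses that the subgroup $G_i=\langle\rho_j:j\neq i\rangle$ has $V(\mathcal{G}')$ and $V(\mathcal{G}'')$ as invariant sets (indeed as unions of orbits), while $G=\langle G_i,(a\,b)\rangle$ with $(a\,b)$ the unique element crossing between the two sides; from this one shows directly that any block containing both a vertex of $V(\mathcal{G}')$ and a vertex of $V(\mathcal{G}'')$ must be everything, and any block inside one side is a block for $G_i$ acting there, which — combined with the action of $(a\,b)$ moving $a$ out of that side — collapses to a singleton. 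Once primitivity is in hand, the rest is bookkeeping and the appeal to Lemma~\ref{showsym}. A secondary, minor nuisance is checking the small degrees $n\leq 4$ where a transposition does not fix three points; there $\mathcal{G}$ is tiny and the claim can be verified by hand, or one notes that the hypothesis that $\mathcal{G}$ is a string group generated by involutions with this shape already pins down $G$.
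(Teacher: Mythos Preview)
Your overall strategy---show primitivity, then apply Lemma~\ref{showsym}, then handle the disconnected case by exhibiting $S_t\times\{1_H\}$ inside $G$ via conjugates of the transposition $\rho_i$---matches the paper's. The disconnected part is essentially identical to what the authors do.

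The one substantive difference is how you obtain primitivity in the connected case. The paper does not argue with blocks directly: it simply observes that the unique $i$-edge is a \emph{perfect split} in the sense of Section~2.3 (the labels on the left are all $\leq i-1$, those on the right all $\geq i+1$), and then invokes Proposition~\ref{perfectsplit} to conclude that a transitive group with a perfect split is primitive. This is a one-line citation. Your route instead reproves this special case of Proposition~\ref{perfectsplit} by hand. The block argument you sketch does work, and the clean version is exactly the one you outline at the end: if $\Delta$ is the block containing $a$ and $\lvert\Delta\rvert\geq 2$, either $\Delta$ already meets $V(\mathcal{G}'')$---in which case the subgroup $\langle\rho_{i+1},\dots,\rho_{r-1}\rangle$, which is transitive on $V(\mathcal{G}'')$ and fixes $V(\mathcal{G}')$ pointwise, forces $V(\mathcal{G}'')\subseteq\Delta$ and symmetrically $V(\mathcal{G}')\subseteq\Delta$---or $\Delta\subseteq V(\mathcal{G}')$, in which case $(a\,b)\Delta$ contains $b\in V(\mathcal{G}'')$ together with a second point still in $V(\mathcal{G}')$, reducing to the previous case. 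Your earlier attempts in the proposal (``a block for $G_i$ acting there \dots collapses to a singleton'') are not quite arguments, since nothing is known about primitivity of $G_i$ on either side; stick with the version you give at the end. Either way, citing Proposition~\ref{perfectsplit} is shorter and is what the paper does.
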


\begin{proof}
First suppose that $\mathcal{G}$ is connected.

If $\mathcal{G}$ has at most four vertices then $i=0$ or $1$, $r=1$, $2$ or $3$ and we have one of the following three graphs.

\begin{center}
\begin{tikzpicture}
\vertex[] (8) at (-3,0) {};
\vertex[] (9) at (-2,0) {};
\draw[-] (8) -- (9)
 node[pos=0.5,above] {$0$};

\vertex[] (1) at (0,0) {};
\vertex[] (2) at (1,0) {};
\vertex[] (3) at (2,0) {};
\draw[-] (1) -- (2)
 node[pos=0.5,above] {$0$};
\draw[-] (3) -- (2)
 node[pos=0.5,above] {$1$};

\vertex[] (4) at (4,0) {};
\vertex[] (5) at (5,0) {};
\vertex[] (6) at (6,0) {};
\vertex[] (7) at (7,0) {};
\draw[-] (4) -- (5)
 node[pos=0.5,above] {$0$};
\draw[-] (5) -- (6)
 node[pos=0.5,above] {$1$};
\draw[-] (6) -- (7)
 node[pos=0.5,above] {$2$};
\end{tikzpicture}
\end{center}

These three graphs trivially represent generating sets for $S_2\cong C_2$, $S_3$ and $S_4$ respectively.

Now let us suppose that $\mathcal{G}$ has at least five vertices. Since $\rho_i$ fixes all the vertices that are not adjacent to the only $i$-edge of $\mathcal{G}$, it fixes, in this case, at least three vertices.
Note that this unique $i$-edge of $\mathcal{G}$ is a perfect split. Since $\mathcal{G}$ is connected, $G$ also acts transitively on its vertex set. Hence, by Proposition \ref{perfectsplit}, the action of $G$ is primitive. Therefore, since $G$ contains a cycle of prime length two fixing at least three vertices of $\mathcal{G}$, by Lemma \ref{showsym}, $G$ must be isomorphic to the symmetric group of degree $n:=\vert V(\mathcal G)\vert$.

Suppose that $\mathcal{G}$ is disconnected. Let $\mathcal{C}$ be its only connected component containing an $i$-edge and let $t$ be the number of vertices in $\mathcal{C}$.
By the above argument, $G$ acts as $S_t$ on the vertices of $\mathcal{C}$ and, moreover, since $\rho_i$ is a transposition, any transposition of $G$ that swaps two vertices of $\mathcal{C}$ can be written as ${\rho_i}^{\sigma}$ for some $\sigma \in G$. Since the other connected components of $\mathcal{G}$ have no $i$-edge, these elements generate the symmetric group on the vertices of $\mathcal{C}$ but fix any vertex of the other components of $\mathcal{G}$ whence $G\cong S_t \times H$ where $H$ is the group induced by the action of $G$ on the set of vertices that are not in $\mathcal C$.%as which $G$ acts on the set of the latest vertices.
\end{proof}

The proof of Theorem \ref{taping} needs a double induction, with the following lemma covering the base cases.

\begin{lemma}\label{basecase1}
For any CPR graph $\mathcal{G}$ of the following shape

\begin{center}
\begin{tikzpicture}
\vertex[label=$A$] (1) at (0,0) {};
\vertex[] (2) at (1,0) {};
\node[draw, right=-6pt of 2, shape=ellipse, style={dashed, minimum width=86pt, minimum height=43pt}] (4) {$\mathcal{G}'$};
\draw[-] (1) -- (2)
 node[pos=0.5,above] {$0$};
\end{tikzpicture}
\end{center}

\noindent where $\mathcal{G}'$ is a subgraph of $\mathcal{G}$ whose edges (if there are any) all have labels at least $1$, the following permutation representation graph $\mathcal{H}$ obtained from connecting a $(-1)$-edge to vertex $A$

\begin{center}
\begin{tikzpicture}
\vertex[] (0) at (-1,0) {};
\vertex[label=$A$] (1) at (0,0) {};
\vertex[] (2) at (1,0) {};
\node[draw, right=-6pt of 2, shape=ellipse, style={dashed, minimum width=86pt, minimum height=43pt}] (4) {$\mathcal{G}'$};
\draw[-] (1) -- (2)
 node[pos=0.5,above] {$0$};
\draw[-] (1) -- (0)
 node[pos=0.5,above] {$-1$};
\end{tikzpicture}
\end{center}

\noindent is a CPR graph. Moreover, if they are connected, these graphs both represent symmetric groups.
\end{lemma}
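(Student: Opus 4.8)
The plan is to prove both assertions by induction on the rank $r$ of $\mathcal{G}$, the statement about symmetric groups being essentially free. Observe first that, since $A$ is a leaf of $\mathcal{G}$ whose unique incident edge has label $0$ and $\mathcal{G}'$ carries only labels $\geq 1$, the generator $\rho_0$ is the single transposition $(A,v)$, and the $(-1)$-edge of $\mathcal{H}$ adds a new vertex $w$ together with the transposition $\rho_{-1}=(w,A)$. The string property for $(\tilde G,\{\rho_{-1},\rho_0,\dots,\rho_{r-1}\})$, where $\tilde G:=\langle\rho_{-1},\rho_0,\dots,\rho_{r-1}\rangle$, is immediate: the support $\{w,A\}$ of $\rho_{-1}$ misses the support of every $\rho_j$ with $j\geq 1$, while $\rho_{-1}$ and $\rho_0$ are consecutive and so need not commute; the rest is inherited from $\mathcal{G}$. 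Thus $\mathcal{H}$ is a permutation representation graph. Both $\mathcal{G}$ and $\mathcal{H}$ have exactly one $0$-edge, resp.\ one $(-1)$-edge, sitting at a leaf, hence fit the hypotheses of Lemma~\ref{howact1}; in particular, if they are connected (on $n$, resp.\ $n+1$, vertices) then $G\cong S_n$ and $\tilde G\cong S_{n+1}$, which settles the last sentence, and in general Lemma~\ref{howact1} describes $\tilde G$ as $S_{t}\times H$ with $t$ the size of the component of $\mathcal{H}$ carrying the $(-1)$-edge.

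For the base case $r=1$, $\mathcal{G}'$ is edgeless, so $\mathcal{G}$ consists of the $0$-edge $\{A,v\}$ plus isolated vertices and $\mathcal{H}$ adjoins the path $w\!-\!A\!-\!v$; the generators $\rho_{-1}$ and $\rho_0$ of $\tilde G$ are distinct involutions, and a rank-$2$ sggi with distinct generators automatically satisfies the intersection property, so $\mathcal{H}$ is a CPR graph. For $r\geq 2$, assuming the lemma for all smaller ranks, I would check the intersection property for $(\tilde G,\{\rho_{-1},\rho_0,\dots,\rho_{r-1}\})$ with Proposition~\ref{breakinparts}, using $\rho_{-1}$ as first and $\rho_{r-1}$ as last generator. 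Put $\tilde G_{-1}=\langle\rho_0,\dots,\rho_{r-1}\rangle$ and $\tilde G_{r-1}=\langle\rho_{-1},\rho_0,\dots,\rho_{r-2}\rangle$. The first condition is that $(\tilde G_{-1},\{\rho_0,\dots,\rho_{r-1}\})$ is a string C-group; but $\tilde G_{-1}$ is just $G$ acting on $V(\mathcal{H})$ with $w$ as an extra fixed point, and $(G,\{\rho_0,\dots,\rho_{r-1}\})$ is a string C-group because $\mathcal{G}$ is a CPR graph. For the second condition, deleting the $(r-1)$-edges of $\mathcal{G}$ produces the permutation representation graph of $G_{r-1}=\langle\rho_0,\dots,\rho_{r-2}\rangle$, which acts faithfully (it is a subgroup of $G$) and is a string C-group of exactly the shape of the lemma but of rank $r-1$; appending the $(-1)$-edge at $A$ to this graph gives the permutation representation graph of $\tilde G_{r-1}$, so by the induction hypothesis $(\tilde G_{r-1},\{\rho_{-1},\rho_0,\dots,\rho_{r-2}\})$ is a string C-group.

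The only substantial point is the third condition, $\tilde G_{-1}\cap\tilde G_{r-1}=\langle\rho_0,\dots,\rho_{r-2}\rangle=G_{r-1}$, which I plan to deduce from the two equalities $\tilde G_{-1}=\mathrm{Stab}_{\tilde G}(w)$ and $\mathrm{Stab}_{\tilde G_{r-1}}(w)=G_{r-1}$, since then $\tilde G_{-1}\cap\tilde G_{r-1}=\mathrm{Stab}_{\tilde G}(w)\cap\tilde G_{r-1}=\mathrm{Stab}_{\tilde G_{r-1}}(w)=G_{r-1}$. In each case the inclusion into the stabilizer is clear (the subgroup fixes $w$), and equality follows from an index count. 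For $\tilde G_{-1}=\mathrm{Stab}_{\tilde G}(w)$: applying Lemma~\ref{howact1} to $\mathcal{H}$ and to $\mathcal{G}$ gives $|\tilde G|/|\tilde G_{-1}|=|\tilde G|/|G|=t$, the size of the component of $\mathcal{H}$ through the $(-1)$-edge, which is exactly the $\tilde G$-orbit length of $w$; hence $\tilde G_{-1}=\mathrm{Stab}_{\tilde G}(w)$ by orbit--stabilizer. For $\mathrm{Stab}_{\tilde G_{r-1}}(w)=G_{r-1}$: let $\mathcal{C}$ be the $\tilde G_{r-1}$-orbit of $w$ (the component of $w$ in $\mathcal{H}$ after its $(r-1)$-edges are removed) and $\mathcal{C}_0=\mathcal{C}\setminus\{w\}$ the $G_{r-1}$-orbit of $A$. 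The decisive observation is that $\rho_{-1}=(w,A)$ and $\rho_0=(A,v)$ are genuine transpositions, acting trivially away from their two points; since by Lemma~\ref{howact1} the group $\tilde G_{r-1}$ acts on $\mathcal{C}$, and $G_{r-1}$ acts on $\mathcal{C}_0$, as the full symmetric group (each of these connected subgraphs again having the required shape), conjugating $\rho_{-1}$ inside $\tilde G_{r-1}$, resp.\ $\rho_0$ inside $G_{r-1}$, yields every transposition supported in $\mathcal{C}$, resp.\ $\mathcal{C}_0$. Consequently the kernel of the restriction of $\tilde G_{r-1}$ to $V(\mathcal{H})\setminus\mathcal{C}$ equals $\mathrm{Sym}(\mathcal{C})$ and the kernel of the restriction of $G_{r-1}$ to $V(\mathcal{G})\setminus\mathcal{C}_0$ equals $\mathrm{Sym}(\mathcal{C}_0)$; as $\rho_{-1}$ acts trivially off $\mathcal{C}$, these two restrictions have the same image, so $[\tilde G_{r-1}:G_{r-1}]=|\mathcal{C}|!/|\mathcal{C}_0|!=|\mathcal{C}|$, which matches the orbit length $[\tilde G_{r-1}:\mathrm{Stab}_{\tilde G_{r-1}}(w)]$ and forces $G_{r-1}=\mathrm{Stab}_{\tilde G_{r-1}}(w)$. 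Proposition~\ref{breakinparts} then makes $\mathcal{H}$ a CPR graph, completing the induction.

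I expect this index computation for the intersection property to be the only real difficulty; everything else is bookkeeping. Two points will need care: verifying that $G_{r-1}$ and $\tilde G_{r-1}$ act faithfully on their vertex sets, so that the edge-deleted graphs are bona fide permutation representation (indeed CPR) graphs to which Lemma~\ref{howact1} and the induction hypothesis apply; and keeping in mind that $G_{r-1}$ need not split as a direct product over its orbits, which is precisely why $[\tilde G_{r-1}:G_{r-1}]$ must be extracted through the conjugation-of-transpositions argument rather than by an orbitwise factorisation.
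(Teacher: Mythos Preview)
Your argument is correct and follows the paper's outline through the induction on $r$, the base case, the use of Proposition~\ref{breakinparts}, and the identification of $\tilde G_{-1}$ with $G$ and of $\tilde G_{r-1}$ with the rank-$(r-1)$ instance. The one genuine divergence is in the verification of $\tilde G_{-1}\cap\tilde G_{r-1}=G_{r-1}$. The paper argues orbit by orbit: it lists the $\Gamma_{-1,r-1}$-orbits $\{B\},\mathcal{O},\mathcal{O}_1,\dots,\mathcal{O}_k$, uses Lemma~\ref{howact1} to see that $\Gamma_{-1,r-1}$ already acts as the full symmetric group on $\mathcal{O}$ and independently elsewhere, and then checks that neither $\Gamma_{-1}$ nor $\Gamma_{r-1}$ can enlarge the action on any of these pieces. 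You instead identify $\tilde G_{-1}$ with $\mathrm{Stab}_{\tilde G}(w)$ and $G_{r-1}$ with $\mathrm{Stab}_{\tilde G_{r-1}}(w)$ via orbit--stabilizer and an index count, which collapses the intersection to a one-line stabilizer computation. Both routes lean on Lemma~\ref{howact1} in the same way; yours trades the orbit-by-orbit bookkeeping for two clean index equalities. One remark: your caveat that ``$G_{r-1}$ need not split as a direct product over its orbits'' is over-cautious here, since Lemma~\ref{howact1} already provides exactly the splitting $S_{|\mathcal{C}_0|}\times H$ (and $S_{|\mathcal{C}|}\times H$ for $\tilde G_{r-1}$, with the \emph{same} $H$ because $\rho_{-1}$ is trivial off $\mathcal{C}$). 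So the index $[\tilde G_{r-1}:G_{r-1}]=|\mathcal{C}|$ can be read off directly, and your kernel/image detour, while correct, is not strictly needed.
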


\begin{proof}

It is clear from Lemma \ref{howact1} that both these graphs represent symmetric groups if they are connected.

Let $G:=\langle \rho_0,...,\rho_{r-1}\rangle$ be the string C-group represented by $\mathcal{G}$.
Let us show that $\mathcal{H}$ is a CPR graph by induction on the rank $r\geq 2$ of $G$.
To show that $\mathcal{H}$ is a CPR graph, we show that the group $\Gamma=\langle \rho_{-1},\rho_0,...,\rho_{r-1}\rangle$ it represents is a string C-group.
Note that, by Proposition \ref{cprlem} and because $G$ is an sggi, so is $\Gamma$.

If $r=1$ then $\Gamma$ is of rank two and it is clearly a string C-group.

\begin{comment}
If $r=2$ and $\mathcal{G}$ is connected, we have that $\mathcal{G}$ is the following graph

\begin{center}
\begin{tikzpicture}
\vertex[] (1) at (0,0) {};
\vertex[] (2) at (1,0) {};
\vertex[] (3) at (2,0) {};
\draw[-] (1) -- (2)
 node[pos=0.5,above] {$0$};
\draw[-] (3) -- (2)
 node[pos=0.5,above] {$1$};
\end{tikzpicture}
\end{center}

and $\mathcal{H}$ is the following graph 

\begin{center}
\begin{tikzpicture}
\vertex[] (1) at (0,0) {};
\vertex[] (2) at (1,0) {};
\vertex[] (3) at (2,0) {};
\vertex[] (4) at (3,0) {};
\draw[-] (1) -- (2)
 node[pos=0.5,above] {$-1$};
\draw[-] (3) -- (2)
 node[pos=0.5,above] {$0$};
\draw[-] (3) -- (4)
 node[pos=0.5,above] {$1$};
\end{tikzpicture}
\end{center}

so the result holds.

If $\mathcal{G}$ is not connected then $\mathcal{H}$ is not connected either. It must be the union of the graph above and single disconnected $1$-edges. Hence $(\Gamma,\{\rho_{-1},\rho_0,\rho_1\})$ is a $\rho_1$-sesqui-extension of the string C-group represented by the graph above and, by Proposition \ref{extremesesqui}, it is a string C-group as well.
\end{comment}

Suppose $r>1$ and our construction yields a CPR graph starting from any CPR graph of rank $r-1$.
By Proposition \ref{breakinparts}, it is sufficient to show that $\Gamma_{-1}$ and $\Gamma_{r-1}$ are string C-groups and that $\Gamma_{-1}\cap\Gamma_{r-1}=\Gamma_{-1,r-1}$. 

Since $\Gamma_{-1}$ is isomorphic to $G$ (as sggi), it is clearly a string C-group.

Now $G_{r-1}$ is a string C-group of rank $r-1$ and thus $\Gamma_{r-1}$ is a string C-group by induction hypothesis (since its permutation representation graph is obtained from applying our construction to the one of $G_{r-1}$).

We are left with showing that $\Gamma_{-1}\cap\Gamma_{r-1}=\Gamma_{-1,r-1}$.
Obviously, $\Gamma_{-1,r-1}\leq \Gamma_{-1}\cap\Gamma_{r-1}$. We have equality if and only if the action of $\Gamma_{-1}\cap\Gamma_{r-1}$ on the orbits of $\Gamma_{-1,r-1}$ is the same as the one of $\Gamma_{-1,r-1}$. The orbits of $\Gamma_{-1,r-1}$ as acting on the vertex set of $\mathcal{H}$ are $\{B\}$, where $B=\rho_{-1}(A)$, $\mathcal{O}:=A^{\Gamma_{-1,r-1}}$ and sets of vertices $\mathcal{O}_1,...,\mathcal{O}_k$ ($k\in\mathbb{N}$) connected in $\mathcal{H}$ by edges of labels $1,...,r-2$.
Let $H$ be the group induced by the action of $\Gamma_{-1,r-1}$ on $\mathcal{O}_1\cup...\cup\mathcal{O}_k$.
By Lemma \ref{howact1}, the group induced by $\Gamma_{-1,r-1}$ on $\mathcal{O}$ is $Sym(\mathcal{O})$ and independent from $H$: we have $\Gamma_{-1,r-1}\cong S_t\times H$ where $t=\vert \mathcal{O}\vert$.
Clearly, since $\Gamma_{-1}$ fixes $B$, so does $\Gamma_{-1}\cap\Gamma_{r-1}$.
The vertices of $\mathcal{O}_1\cup...\cup\mathcal{O}_k$ are never connected by a $(-1)$-edge. Hence $\Gamma_{-1}\cap\Gamma_{r-1}$ cannot act as a proper overgroup of $H$ on $\mathcal{O}_1\cup...\cup\mathcal{O}_k$.
Moreover, by Lemma \ref{howact1}, both $\Gamma_{-1}$ and $\Gamma_{r-1}$ act as symmetric groups on their respective orbit containing $A$ (that contains the whole of $\mathcal{O}$) and independently on the others.
Therefore, $\Gamma_{-1}\cap\Gamma_{r-1}$ acts in the same way as $\Gamma_{-1,r-1}$ on the orbits of the latter and our claim holds.
\end{proof}

Note that this lemma does not always hold when $\mathcal{G}'$ has edges of label $0$. We give two basic counterexamples.

Consider the following permutation representation graph.

\begin{center}
\begin{tikzpicture}
\vertex[] (1) at (0,0) {};
\vertex[] (2) at (1,0) {};
\vertex[] (3) at (2,0) {};
\vertex[] (4) at (3,0) {};
\draw[-] (1) -- (2)
 node[pos=0.5,above] {$0$};
\draw[-] (2) -- (3)
 node[pos=0.5,above] {$1$};
\draw[thin,double distance=2pt] (4) -- (3)
 node[pos=0.5,above] {$0$} node[pos=0.5,below] {$2$};
\end{tikzpicture}
\end{center}

\noindent This is a CPR graph for the symmetric group $S_4$ as generated by $\{(1,2)(3,4), (2,3), (3,4)\}$ while the group $\Gamma=\langle \rho_{-1},\rho_0,\rho_1,\rho_2\rangle\cong S_5$ with permutation representation graph

\begin{center}
\begin{tikzpicture}
\vertex[] (1) at (0,0) {};
\vertex[] (2) at (1,0) {};
\vertex[] (3) at (2,0) {};
\vertex[] (4) at (3,0) {};
\vertex[] (5) at (4,0) {};
\draw[-] (1) -- (2)
 node[pos=0.5,above] {$-1$};
\draw[-] (2) -- (3)
 node[pos=0.5,above] {$0$};
\draw[-] (4) -- (3)
 node[pos=0.5,above] {$1$};
\draw[thin,double distance=2pt] (4) -- (5)
 node[pos=0.5,above] {$0$} node[pos=0.5,below] {$2$};
\end{tikzpicture}
\end{center}

\noindent does not satisfy the intersection property. Indeed, with our usual notations, we have $\Gamma_{-1}\cap\Gamma_2>\Gamma_{-1,2}$ since both $\Gamma_{-1}$ and $\Gamma_2$ act as $S_4$ on the four rightmost vertices (in fact, $\Gamma_2\cong S_5$) and $\Gamma_{-1,2}$ clearly acts imprimitively on them.

Another valid counterexample would be the CPR graph

\begin{center}
\begin{tikzpicture}
\vertex[] (1) at (0,0) {};
\vertex[] (2) at (1,0) {};
\vertex[] (3) at (2,0) {};
\vertex[] (4) at (3,0) {};
\vertex[] (5) at (4,0) {};
\vertex[] (6) at (5,0) {};
\vertex[] (7) at (6,0) {};
\draw[-] (1) -- (2)
 node[pos=0.5,above] {$0$};
\draw[-] (2) -- (3)
 node[pos=0.5,above] {$1$};
\draw[-] (4) -- (3)
 node[pos=0.5,above] {$2$};
\draw[-] (4) -- (5)
 node[pos=0.5,above] {$1$};
\draw[-] (5) -- (6)
 node[pos=0.5,above] {$0$};
\draw[-] (7) -- (6)
 node[pos=0.5,above] {$1$};
\end{tikzpicture}
\end{center}

giving the permutation representation graph

\begin{center}
\begin{tikzpicture}
\vertex[] (0) at (-1,0) {};
\vertex[] (1) at (0,0) {};
\vertex[] (2) at (1,0) {};
\vertex[] (3) at (2,0) {};
\vertex[] (4) at (3,0) {};
\vertex[] (5) at (4,0) {};
\vertex[] (6) at (5,0) {};
\vertex[] (7) at (6,0) {};
\draw[-] (1) -- (0)
 node[pos=0.5,above] {$-1$};
\draw[-] (1) -- (2)
 node[pos=0.5,above] {$0$};
\draw[-] (2) -- (3)
 node[pos=0.5,above] {$1$};
\draw[-] (4) -- (3)
 node[pos=0.5,above] {$2$};
\draw[-] (4) -- (5)
 node[pos=0.5,above] {$1$};
\draw[-] (5) -- (6)
 node[pos=0.5,above] {$0$};
\draw[-] (7) -- (6)
 node[pos=0.5,above] {$1$};
\end{tikzpicture}
\end{center}

that is not a CPR graph as $C_2\times C_2 \cong \Gamma_{-1} \cap \Gamma_{1,2} > \Gamma_{-1,1,2}\cong C_2$.

We now move onto the main proof of this section, the proof of Theorem \ref{taping}.

\begin{proof}[Proof of Theorem \ref{taping}]
Let $G=\langle \tilde{\rho}_0,...,\tilde{\rho}_{r-1}\rangle$ be the rank-$r$ string C-group represented by $\mathcal{G}$ and let $\tilde{G}=\langle \rho_0,...,\rho_{\tilde{r}-1}\rangle$ be the rank-$\tilde{r}$ string C-group represented by $\tilde{\mathcal{G}}$. Let $\mathcal{H}$ be the permutation representation graph obtained by our construction.

%We consider the following well-order of $\mathbb{N}_0\times\mathbb{N}_0$: $(n_1,m_1)<(n_2,m_2)$ if and only if $n_1+m_1<n_2+m_2$ or if $n_1+m_1=n_2+m_2$ and $\begin{cases}
%    n_1<n_2 & \text{ if } n_1+m_1 \text{ is odd}\\
%    n_1>n_2 & \text{ if } n_1+m_1 \text{ is even}
%\end{cases}$.
\begin{comment}
\begin{center}
\renewcommand{\arraystretch}{1.5}
$\begin{NiceArray}{w{c}{0.75cm}|c c c c}
     \diagbox{r}{\tilde{r\;}} & 1 & 2 & 3 & ...
     \\ \hline
     1 & (1,1) & (1,2) & (1,3) & ...
     \\
     2 & (2,1) & (2,2) & (2,3) & ...
     \\
     3 & (3,1) & (3,2) & (3,3) & ...
     \\
     4 & (4,1) & (4,2) & (4,3) & ...
     \\
     \vdots & \vdots & \vdots & \vdots & \ddots
\CodeAfter
\begin{tikzpicture}
\begin{scope} [->]
\draw (2-2.east) to (2-3.west);
\draw (2-3.south west) to  (3-2.north east);
\draw (3-2.south) to  (4-2.north);
\draw (4-2.north east) to  (3-3.south west);
\draw (3-3.north east) to (2-4.south west);
\draw (2-4.south) to (3-4.north);
\draw (3-4.south west) to (4-3.north east);
\draw (4-3.south west) to (5-2.north east);
\end{scope}
\end{tikzpicture}
\end{NiceArray}$

\end{center}
\end{comment}
We prove our claim with a double induction on $(r,\tilde{r})$. 

If $r=1$, $\mathcal{G}$ is a union of disconnected $0$-edges and applying our gluing construction amounts to connecting a $(-1)$-edge to $B$ in $\tilde{\mathcal{G}}$ then, eventually, taking a $(-1)$-sesqui-extension of the group represented by the result. By Lemma \ref{basecase1} and Proposition \ref{extremesesqui}, this yields a new string C-group.
If $\tilde{r}=1$, the same holds by symmetry.
The following table shows which cases are thus dealt with by the previous argument.

\begin{center}
\renewcommand{\arraystretch}{1.5}
$\begin{NiceArray}{w{c}{0.75cm}|c c c c}
     \diagbox{r}{\tilde{r\;}} & 1 & 2 & 3 & ...
     \\ \hline
     1 & (1,1) & (1,2) & (1,3) & ...
     \\
     2 & (2,1) & (2,2) & (2,3) & ...
     \\
     3 & (3,1) & (3,2) & (3,3) & ...
     \\
     4 & (4,1) & (4,2) & (4,3) & ...
     \\
     \vdots & \vdots & \vdots & \vdots & \ddots
\CodeAfter
\begin{tikzpicture}
%\begin{scope} [->]
%\draw (2-2.east) to (2-3.west);
%\draw (2-3.south west) to  (3-2.north east);
%\draw (3-2.south) to  (4-2.north);
%\draw (4-2.north east) to  (3-3.south west);
%\draw (3-3.north east) to (2-4.south west);
%\draw (2-4.south) to (3-4.north);
%\draw (3-4.south west) to (4-3.north east);
%\draw (4-3.south west) to (5-2.north east);
%\end{scope}
\node (1) at (2-2) {\Large{\color{green}\ding{52}}};
\node (2) at (2-3) {\Large{\color{green}\ding{52}}};
\node (3) at (2-4) {\Large{\color{green}\ding{52}}};
\node (3) at (3-2) {\Large{\color{green}\ding{52}}};
\node (5) at (4-2) {\Large{\color{green}\ding{52}}};
\node (6) at (5-2) {\Large{\color{green}\ding{52}}};

\node (7) at (2-5) {\Large{\color{green}\ding{52}}};
\node (8) at (6-2) {\Large{\color{green}\ding{52}}};
\end{tikzpicture}
\end{NiceArray}$

\end{center}

Now suppose that the result holds when applying our gluing construction to appropriately shaped permutation representation graphs representing string C-groups of any ranks $R_1$ and $R_2$ such that $R_1 \leq r$ and $R_2 \leq \tilde{r}$ and $(R_1,R_2)\neq (r,\tilde{r})$.

Let $\Gamma$ be the permutation group whose generators $\rho_{-r},...,\rho_{-1},\rho_0,...,\rho_{\tilde{r}-1}$ are represented by $\mathcal{H}$ (so that $\rho_l=\tilde{\rho}_{-(l+1)}$ for all $l\in\{-r,...,-1\}$).
By Proposition \ref{breakinparts}, it is sufficient to show that $\Gamma_{-r}$ and $\Gamma_{\tilde{r}-1}$ are string C-groups and that $\Gamma_{-r}\cap\Gamma_{\tilde{r}-1}=\Gamma_{-r,\tilde{r}-1}$.

The permutation representation graphs of $\Gamma_{-r}$ and $\Gamma_{\tilde{r}-1}$ result respectively from applying our gluing construction to the CPR graphs of $G_{r-1}$ and $\tilde{G}$ and to the CPR graphs of $G$ and $\tilde{G}_{\tilde{r}-1}$. Since $G_{r-1}$ and $\tilde{G}$ are of ranks $r-1$ and $\tilde{r}$ respectively,  $\Gamma_{-r}$ is a CPR graph by induction hypothesis. Similarly, $\Gamma_{\tilde{r}-1}$ is also a CPR graph by induction hypothesis.

We are left with checking that $\Gamma_{-r}\cap\Gamma_{\tilde{r}-1}=\Gamma_{-r,\tilde{r}-1}$.
It is always true that $\Gamma_{-r}\cap\Gamma_{\tilde{r}-1}\geq\Gamma_{-r,\tilde{r}-1}$; and there is equality if $\Gamma_{-r}\cap\Gamma_{\tilde{r}-1}$ acts in the same way as $\Gamma_{-r,\tilde{r}-1}$ on the orbits of the latter.

We remind the reader that $C$ is the vertex at which $\mathcal G$ and $\mathcal G'$ are glued (see statement of the theorem).

The orbits of $\Gamma_{-r,\tilde{r}-1}$ as acting on the vertices of $\mathcal{H}$ are $\mathcal{O}:=C^{\Gamma_{-r,\tilde{r}-1}}$ and sets of vertices $\mathcal{O}_1,...,\mathcal{O}_{k_1},...,\mathcal{O}_{k_2}$ ($k_1\leq k_2\in\mathbb{N}$) connected either by edges of labels $-r+1,...,-2$ (for $\mathcal{O}_1,...,\mathcal{O}_{k_1}$) or by edges of labels $1,...,\tilde{r}-2$ (for $\mathcal{O}_{k_1+1},...,\mathcal{O}_{k_2}$).
Let $H$, respectively $K$, be the group induced by the action of $\Gamma_{-r,\tilde{r}-1}$ on $\mathcal{O}_1\cup...\cup\mathcal{O}_{k_1}$, respectively $\mathcal{O}_{k_1+1}\cup...\cup\mathcal{O}_{k_2}$. Since the edges connecting the vertices in $\mathcal{O}_1\cup...\cup\mathcal{O}_{k_1}$ have no common label with the edges connecting the vertices of $\mathcal{O}_{k_1+1}\cup...\cup\mathcal{O}_{k_2}$, we have that $\Gamma_{-r,\tilde{r}-1}$ acts as $H\times K$ on $\mathcal{O}_1\cup...\cup\mathcal{O}_{k_2}$.
By Lemma \ref{howact1}, $\Gamma_{-r,\tilde{r}-1}$ acts as $Sym(\mathcal O)$ on $\mathcal{O}$ and independently from the way it acts on $\mathcal{O}_1\cup...\cup\mathcal{O}_{k_2}$: $\Gamma_{-r,\tilde{r}-1}\cong S_t\times (H\times K)$ where $t=\vert\mathcal{O}\vert$.
Since none of the vertices of $\mathcal{O}_1\cup...\cup\mathcal{O}_{k_1}$ are connected by any $(\tilde{r}-1)$-edge, $\Gamma_{-r}$, and hence $\Gamma_{-r}\cap\Gamma_{\tilde{r}-1}$, acts in the same way as $\Gamma_{-r,\tilde{r}-1}$ on $\mathcal{O}_1\cup...\cup\mathcal{O}_{k_1}$.
Similarly, since none of the vertices of $\mathcal{O}_{k_1+1}\cup...\cup\mathcal{O}_{k_2}$ are connected by any $(-r)$-edge, $\Gamma_{\tilde{r}-1}$, and hence $\Gamma_{-r}\cap\Gamma_{\tilde{r}-1}$, acts in the same way as $\Gamma_{-r,\tilde{r}-1}$ on $\mathcal{O}_{k_1+1}\cup...\cup\mathcal{O}_{k_2}$.
By Lemma \ref{howact1}, both $\Gamma_{-r}$ and $\Gamma_{\tilde{r}-1}$ act as symmetric groups on their orbit containing $C$ (and hence on $\mathcal{O}$) and independently on their other orbits. Therefore $\Gamma_{-r}\cap\Gamma_{\tilde{r}-1}$ acts as $S_t\times (H\times K)$ on the orbits of $\Gamma_{-r,\tilde{r}-1}$ and the equality holds as required.
\end{proof}

\section{A not so successful gluing method}\label{doesntwork}
Looking at many examples of CPR graphs, it seemed natural to us to suggest the following conjecture.
\begin{conjecture}\label{false}
Let $i\geq 2$. Let $\mathcal{G}$ be a CPR graph of the following form

\begin{center}
\resizebox{\columnwidth}{!}{
\begin{tikzpicture}
\vertex[] (1) at (0,0) {};
\vertex[] (2) at (2,0) {};
\vertex[] (3) at (4,0) {};
\vertex[] (4) at (6,0) {};
\vertex[] (5) at (8,0) {};
\vertex[] (6) at (10,0) {};
\vertex[] (7) at (12,0) {};
\vertex[] (8) at (14,0) {};
\node[draw, right=-6pt of 8, shape=ellipse, style={dashed, minimum width=86pt, minimum height=43pt}] (9) {$\mathcal{G}'$};
\draw[-] (1) -- (2)
 node[pos=0.5,above] {$0$};
\draw[-] (3) -- (2)
 node[pos=0.5,above] {$1$};
\draw[dashed] (3) -- (4);
\draw[-] (5) -- (4)
 node[pos=0.5,above] {$i-3$};
\draw[-] (5) -- (6)
 node[pos=0.5,above] {$i-2$};
\draw[-] (7) -- (6)
 node[pos=0.5,above] {$i-1$};
\draw[-] (7) --(8)
 node[pos=0.5,above] {$i$};
\end{tikzpicture}}
\end{center}

where $\mathcal{G}'$ is a subgraph of $\mathcal{G}$.
Suppose that if $\mathcal{G}'$ is non-empty then all its edges have labels greater than or equal to $i$.
Then the following permutation representation graph is a CPR graph as well

\begin{center}
\resizebox{\columnwidth}{!}{
\begin{tikzpicture}
\vertex[] (1) at (0,2) {};
\vertex[] (2) at (2,2) {};
\vertex[] (3) at (4,2) {};
\vertex[] (4) at (6,2) {};
\vertex[] (5) at (8,2) {};
\vertex[] (6) at (10,2) {};
\vertex[] (7) at (12,2) {};
\vertex[] (8) at (14,2) {};

%vertices below
\vertex[] (10) at (0,0) {};
\vertex[] (11) at (2,0) {};
\vertex[] (12) at (4,0) {};
\vertex[] (13) at (6,0) {};
\vertex[] (14) at (8,0) {};
\vertex[] (15) at (10,0) {};

\node[draw, right=-6pt of 8, shape=ellipse, style={dashed, minimum width=100pt, minimum height=50pt}] (9) {$\mathcal{G}'$};
\draw[-] (1) -- (2)
 node[pos=0.5,above] {$0$};
\draw[-] (3) -- (2)
 node[pos=0.5,above] {$1$};
\draw[dashed] (3) -- (4);
\draw[-] (5) -- (4)
 node[pos=0.5,above] {$i-3$};
\draw[-] (5) -- (6)
 node[pos=0.5,above] {$i-2$};
\draw[-] (7) -- (6)
 node[pos=0.5,above] {$i-1$};
\draw[-] (7) --(8)
 node[pos=0.5,above] {$i$};

%horizontal edges below 
\draw[-] (10) -- (11)
 node[pos=0.5,below] {$0$};
\draw[-] (12) -- (11)
 node[pos=0.5,below] {$1$};
\draw[dashed] (12) -- (13);
\draw[-] (14) -- (13)
 node[pos=0.5,below] {$i-3$};
\draw[-] (14) -- (15)
 node[pos=0.5,below] {$i-2$};

%vertical edges added
\draw[-] (1) -- (10)
 node[pos=0.5,left] {$i$};
\draw[-] (2) -- (11)
 node[pos=0.5,left] {$i$};
\draw[-] (3) -- (12)
 node[pos=0.5,left] {$i$};
\draw[-] (4) -- (13)
 node[pos=0.5,left] {$i$};
\draw[-] (5) -- (14)
 node[pos=0.5,left] {$i$};
\draw[-] (6) -- (15)
 node[pos=0.5,left] {$i$};

\end{tikzpicture}}
\end{center}

\end{conjecture}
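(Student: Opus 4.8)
The plan is to copy the architecture of the proof of Theorem~\ref{taping}: run a double induction, on the integer $i$ and on the number of edges of $\mathcal{G}'$, and reduce the string C-group property to the three conditions of Proposition~\ref{breakinparts}. Write $\mathcal{H}$ for the conjectured graph and $\Gamma=\langle\rho_0,\ldots,\rho_{m-1}\rangle$ for the group it represents, with $\rho_0,\ldots,\rho_i$ carried by the doubled path (top row $v_1,\ldots,v_{i+2}$, bottom row $w_1,\ldots,w_i$, vertical $i$-edges $v_k-w_k$ for $1\le k\le i$) and $\rho_i,\ldots,\rho_{m-1}$ coming from $\mathcal{G}'$. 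First I would check that $\Gamma$ is an sggi: by Lemma~\ref{cprlem} it is enough to inspect the connected components induced by two non-consecutive labels, and the only genuinely new ones are the $4$-cycles $v_k\,v_{k+1}\,w_{k+1}\,w_k$ carrying labels $j$ and $i$ alternately (for $0\le j\le i-2$), which are alternating squares, together with single and double edges living inside $\mathcal{G}'$. So the string property holds and only the intersection property is at stake.

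By Proposition~\ref{breakinparts} it then suffices to show that $\Gamma_0$ and $\Gamma_{m-1}$ are string C-groups and that $\Gamma_0\cap\Gamma_{m-1}=\Gamma_{0,m-1}$. For $\Gamma_{m-1}$: deleting the edges of label $m-1$ does not touch the doubled path, so one recognises the same construction applied to a smaller graph and the induction hypothesis applies, exactly as in the right-hand recursion of Theorem~\ref{taping} (using that $\mathcal{G}'$ carries only labels $\ge i$). For $\Gamma_0$: deleting the $0$-edges detaches the pair $\{v_1,w_1\}$, joined only by its $i$-edge, and what is left is, after the relabelling $j\mapsto j-1$, a graph of exactly the conjectured shape with $i$ replaced by $i-1$; so the induction on $i$ handles the main component, the detached edge is dealt with separately (this already needs a little care, since the corresponding sesqui-extension is with respect to a non-extreme generator), and the base case $i=2$ is checked by a direct computation.

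The real obstacle is the equality $\Gamma_0\cap\Gamma_{m-1}=\Gamma_{0,m-1}$. In Theorem~\ref{taping} the analogous step went through because the glued vertex sat at a \emph{perfect} split, so Lemma~\ref{howact1} (via Proposition~\ref{perfectsplit} and Lemma~\ref{showsym}) forced every relevant subgroup to act as a direct product of symmetric groups on its orbits, leaving no room for the intersection to exceed the obvious direct product. Here the column of parallel $i$-edges destroys that picture: the generators $\rho_0,\ldots,\rho_{i-2},\rho_i$ preserve the blocks $\{v_k,w_k\}$, so $\langle\rho_0,\ldots,\rho_i\rangle$ acts imprimitively on the doubled path and embeds into a wreath product $S_2\wr S_i$ as in Theorem~\ref{wreath} rather than being the full symmetric group. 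The orbit-by-orbit comparison that closed the argument for Theorem~\ref{taping} therefore breaks down, and $\Gamma_0\cap\Gamma_{m-1}$ can be strictly larger than $\Gamma_{0,m-1}$. I would accordingly not expect the induction to close in general; instead I would first test the construction on the smallest instances and, in particular, on graph~(X) of~\cite[Table 6]{woof}, where — as it turns out — the intersection property already fails, so the conjecture is in fact false.
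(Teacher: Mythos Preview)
You correctly land on the right conclusion --- the conjecture is false, and graph~(X) of~\cite[Table~6]{woof} is where to look --- and your diagnosis of \emph{why} the Theorem~\ref{taping} argument cannot be transported here (the vertical $i$-edges force an imprimitive wreath-product action, so Lemma~\ref{howact1} no longer pins the relevant subgroups down as full symmetric groups) is exactly the right intuition. But as written your proposal is not a disproof: it is an outline of a failed proof attempt followed by the bare assertion ``as it turns out --- the intersection property already fails''. That last clause is the entire content of the disproof, and you have not supplied it.

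What the paper does, and what your proposal is missing, is an explicit witness. Concretely (Propositions~\ref{counterexample1} and~\ref{counterexample2}): one first checks that the input graph $\mathcal{G}$ really is a CPR graph, and then, in the glued graph $\mathcal{G}'$, one writes down a specific element $\sigma=(a,b)\rho_{h+1}$ and shows that $\sigma\in\Gamma'_{0,h+3\to r-1}\cap\Gamma'_{h+2\to r-1}$ but $\sigma\notin\Gamma'_{0,h+2\to r-1}$. The last non-membership is established by a parity argument: under a natural isomorphism the image of $\sigma$ would be a single transposition, but the target group is generated by even permutations. Two further points worth noting: (i) the failure is not located at the extreme intersection $\Gamma_0\cap\Gamma_{m-1}$ that your inductive scheme singles out, but deeper inside, at the level of $\langle\rho_0,\ldots,\rho_{h+2}\rangle$; and (ii) verifying that the starting graph is genuinely a CPR graph (so that the conjecture's hypothesis is met) is itself non-trivial and takes a separate argument. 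Without these ingredients your proposal identifies the right suspect but does not convict.
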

This conjecture happens to be false. In fact, a graph put forward in \cite[Table 6]{woof} gives the perfect counterexample.

Before proving this, we give the following non-standard definition in order to simplify the reading of the remaining results of this chapter. We also state and prove a few useful lemmas.

For an integer $r\geq 1$, the {\em rank-$r$ simplex} (or {\em $r$-simplex}) is the permutation representation graph of the following shape.
 \begin{center}
 \resizebox{0.8\columnwidth}{!}{
        \begin{tikzpicture}
\vertex[] (1) at (0,1) {};
\vertex[] (2) at (2,1) {};
\vertex[] (3) at (4,1) {};
\vertex[] (4) at (6,1) {};
\vertex[] (5) at (8,1) {};
\vertex[] (6) at (10,1) {};
\vertex[] (7) at (12,1) {};
\draw[-] (1) -- (2)
 node[pos=0.5,above] {$0$};
 \draw[-] (3) -- (2)
 node[pos=0.5,above] {$1$};
 \draw[dashed] (3) -- (4);
 \draw[-] (4) -- (5)
 node[pos=0.5,above] {$r-3$};
 \draw[-] (5) -- (6)
 node[pos=0.5,above] {$r-2$};
 \draw[-] (6) -- (7)
 node[pos=0.5,above] {$r-1$};
 \end{tikzpicture}}
 \end{center}

We have the following well-known result.

\begin{lemma}\label{simplices}
    For an integer $r\geq 1$, a rank-$r$ simplex is a CPR graph representing $S_{r+1}$.
\end{lemma}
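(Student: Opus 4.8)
The plan is to prove that the rank-$r$ simplex is a CPR graph for $S_{r+1}$ by a straightforward induction on $r$, using Proposition~\ref{breakinparts} to reduce to lower rank. The base cases $r=1,2,3$ are immediate: the graphs are a single $0$-edge, a path on three vertices with labels $0,1$, and a path on four vertices with labels $0,1,2$, which manifestly generate $S_2$, $S_3$, $S_4$ respectively (these are exactly the three small graphs already displayed in the proof of Lemma~\ref{howact1}).

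For the inductive step, let $\mathcal{G}$ be the rank-$r$ simplex on vertices $1,2,\dots,r+1$ with $\rho_i=(i+1,i+2)$ for $i=0,\dots,r-1$, so that $\langle\rho_0,\dots,\rho_{r-1}\rangle$ is the group generated by adjacent transpositions, which is visibly transitive on $\{1,\dots,r+1\}$ and equal to $S_{r+1}$. It is an sggi since it is the standard Coxeter presentation of $S_{r+1}$ of type $\{3,3,\dots,3\}$ (nonadjacent adjacent transpositions commute). To get the intersection property from Proposition~\ref{breakinparts}, I would observe that $G_0=\langle\rho_1,\dots,\rho_{r-1}\rangle$ is represented by the rank-$(r-1)$ simplex sitting on vertices $2,\dots,r+1$ (plus an isolated vertex $1$), hence is a string C-group by induction, and symmetrically $G_{r-1}=\langle\rho_0,\dots,\rho_{r-2}\rangle$ is represented by the rank-$(r-1)$ simplex on $1,\dots,r$ (plus isolated vertex $r+1$), also a string C-group by induction. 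Finally I must check $G_0\cap G_{r-1}=G_{0,r-1}=\langle\rho_1,\dots,\rho_{r-2}\rangle$. Here $G_0=\mathrm{Sym}(\{2,\dots,r+1\})$ fixing $1$, $G_{r-1}=\mathrm{Sym}(\{1,\dots,r\})$ fixing $r+1$, so $G_0\cap G_{r-1}$ consists of permutations fixing both $1$ and $r+1$, i.e.\ $\mathrm{Sym}(\{2,\dots,r\})=\langle\rho_1,\dots,\rho_{r-2}\rangle=G_{0,r-1}$, as desired.

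The main (and really only) obstacle is essentially bookkeeping: correctly tracking which vertices the subgraph for each $G_i$ lives on and confirming the intersection is computed on the right point sets; there is no genuine difficulty, which is why the result is called ``well-known.'' Alternatively, one could dispense with induction entirely by noting that the permutation group generated by the adjacent transpositions of $S_{r+1}$ together with its Coxeter presentation gives the string C-group of type $\{3^{r-1}\}$ corresponding to the $(r)$-simplex polytope, but the inductive argument via Proposition~\ref{breakinparts} is self-contained within the paper and is the cleaner route to present.
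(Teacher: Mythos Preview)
Your proof is correct. The paper itself does not prove Lemma~\ref{simplices} at all: it is simply stated as a ``well-known result'' with no argument given. Your inductive proof via Proposition~\ref{breakinparts}, identifying $G_0$, $G_{r-1}$ and $G_{0,r-1}$ as the pointwise stabilizers of $\{1\}$, $\{r+1\}$ and $\{1,r+1\}$ respectively inside $S_{r+1}$, is a clean and standard way to establish the result, and it stays entirely within the toolkit the paper has already set up. The alternative you mention---recognizing the Coxeter group of type $A_r$---is of course the reason the paper feels entitled to omit the proof, but your inductive argument is the appropriate self-contained version.
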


We often also use the name ``$r$-simplex'' to designate the group represented by an $r$-simplex.

\begin{lemma}\label{multisimp}
  For any integer $r\geq 1$, a union of identical rank-$r$ simplexes is a CPR graph representing $S_{r+1}$.
\end{lemma}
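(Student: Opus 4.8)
The plan is to reduce the statement to Lemma~\ref{simplices} by showing that a union of $m\ge 1$ identical rank-$r$ simplexes, regarded as an sggi together with its distinguished ordered generating set, is isomorphic to a single rank-$r$ simplex. Write the vertex set as $\Omega=\Omega_1\sqcup\dots\sqcup\Omega_m$, where $\Omega_j=\{1,\dots,r+1\}$ is the vertex set of the $j$-th copy, and let $\iota_j\colon\Omega_1\to\Omega_j$ be the natural bijection carrying the first simplex onto the $j$-th one. Let $\tilde G=\langle\tilde\rho_0,\dots,\tilde\rho_{r-1}\rangle\le Sym(\Omega_1)$ be the group of a single rank-$r$ simplex and let $\rho_0,\dots,\rho_{r-1}$ generate the group $G$ of the union, so that the restriction of $\rho_i$ to $\Omega_j$ equals $\iota_j\tilde\rho_i\iota_j^{-1}$ for all $i,j$. (For $r=1$ the union is a disjoint union of single $0$-edges, representing $C_2\cong S_2$, trivially a string C-group of rank $1$; the argument below covers this case as well.)

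Next I would consider the restriction homomorphism $\pi\colon G\to Sym(\Omega_1)$, $g\mapsto g|_{\Omega_1}$. Since $\pi(\rho_i)=\tilde\rho_i$ for every $i$, the map $\pi$ is a surjection onto $\tilde G$ sending the ordered generating set $(\rho_0,\dots,\rho_{r-1})$ to $(\tilde\rho_0,\dots,\tilde\rho_{r-1})$. The key point is that $\pi$ is injective. To see this, let $\sigma\in Sym(\Omega)$ be defined by $\sigma(\iota_j(x))=\iota_{j+1}(x)$ for all $x\in\Omega_1$, indices taken mod $m$; a direct check gives $\sigma\rho_i\sigma^{-1}=\rho_i$ for every $i$, so $\sigma$ centralizes $G$. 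Hence if $g\in\ker\pi$, then $g$ fixes $\Omega_1$ pointwise, and for any $y\in\Omega_j$, writing $y=\sigma^{\,j-1}(z)$ with $z\in\Omega_1$, we get $g(y)=g\sigma^{\,j-1}(z)=\sigma^{\,j-1}g(z)=\sigma^{\,j-1}(z)=y$; therefore $g=1_G$. Thus $\pi$ is an isomorphism of sggis between $(G,\{\rho_0,\dots,\rho_{r-1}\})$ and $(\tilde G,\{\tilde\rho_0,\dots,\tilde\rho_{r-1}\})$.

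Finally I would conclude: whether an sggi is a string C-group depends only on its isomorphism type as an sggi, since the intersection property is intrinsic to the abstract group with its ordered set of generators. By Lemma~\ref{simplices}, $(\tilde G,\{\tilde\rho_0,\dots,\tilde\rho_{r-1}\})$ is a string C-group isomorphic to $S_{r+1}$; hence so is $(G,\{\rho_0,\dots,\rho_{r-1}\})$, i.e.\ the union of identical rank-$r$ simplexes is a CPR graph representing $S_{r+1}$. I do not expect a genuine obstacle: the only step requiring care is the injectivity of $\pi$, i.e.\ checking that spreading the action over several identical copies neither enlarges nor shrinks the abstract group, and this is exactly what the copy-permuting element $\sigma$ delivers. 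An alternative route, closer to the style of the rest of the paper, would be an induction on $r$ via Proposition~\ref{breakinparts}: deleting all $0$-edges (resp. all $(r-1)$-edges) of each copy leaves a union of rank-$(r-1)$ simplexes plus isolated vertices, and inside $G\cong S_{r+1}$ the subgroups $G_0$ and $G_{r-1}$ are the stabilizers of two distinct points, whose intersection is $G_{0,r-1}$; but the isomorphism argument above is shorter.
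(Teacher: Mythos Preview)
Your proposal is correct and takes essentially the same approach as the paper: both establish an isomorphism of sggis between the union and a single rank-$r$ simplex, the paper by writing down the map $\phi:(i,i+1)\mapsto\prod_j(i+j(r+1),\,i+1+j(r+1))$ and declaring it a ``natural isomorphism,'' you by taking the restriction $\pi$ to one copy (the inverse of $\phi$) and verifying injectivity via the copy-cycling centralizer $\sigma$. Your argument is more explicit about why the map is bijective, but the underlying content is identical.
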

\begin{proof}
 Let $\Gamma$ be the group represented by such a union $\mathcal{G}$ of $k$ identical rank-$r$ simplexes. Numbering the vertices of $\mathcal{G}$ simplex after simplex, we have $\Gamma=\langle \prod_{j=0}^{k} (i+j(r+1),i+1+j(r+1)) \vert i\in\{1,...,r\}\rangle$ and $$\phi:(i,i+1) \mapsto \prod_{j=0}^{k} (i+j(r+1),i+1+j(r+1))$$ gives a natural isomorphism between $S_{r+1}$ and $\Gamma$.
% A proof very similar to the one given for Lemma \ref{simplices} establishes that $\mathcal{G}$ is a CPR graph.
\end{proof}

\begin{lemma}\cite[Lemma 6.1]{RankofPolAltGroups}\label{result1}
   For any integers $r\geq 2$ and $1\leq h\leq r-1$, the following permutation representation graph $\mathcal{G}$ is a CPR graph for $S_{h+1}\times S_{r+1}$.

    \begin{center}
    \resizebox{0.8\columnwidth}{!}{
\begin{tikzpicture}
\vertex[] (1) at (0,1) {};
\vertex[] (2) at (2,1) {};
\vertex[] (3) at (4,1) {};
\vertex[] (4) at (6,1) {};

\vertex[] (5) at (0,0) {};
\vertex[] (6) at (2,0) {};
\vertex[] (7) at (4,0) {};
\vertex[] (8) at (6,0) {};
\vertex[] (9) at (8,0) {};
\vertex[] (10) at (10,0) {};
\vertex[] (11) at (12,0) {};

\draw[-] (1) -- (2)
 node[pos=0.5,above] {$0$};
\draw[dashed] (3) -- (2);
\draw[-] (3) -- (4)
 node[pos=0.5,above] {$h-1$};

\draw[-] (5) -- (6)
 node[pos=0.5,below] {$0$};
 \draw[dashed] (6) -- (7);
\draw[-] (7) -- (8)
 node[pos=0.5,below] {$h-1$};
\draw[-] (9) -- (8)
 node[pos=0.5,below] {$h$};
  \draw[dashed] (9) -- (10);
\draw[-] (10) --(11)
 node[pos=0.5,below] {$r-1$};

\end{tikzpicture}}
\end{center}
    
\end{lemma}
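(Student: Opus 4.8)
The plan is to show first that $\mathcal{G}$ represents the group $S_{h+1}\times S_{r+1}$ in its natural product action, and then, separately, that the resulting sggi satisfies the intersection property; the second part will be an induction on $r$, valid for all admissible $h$ simultaneously, based on Proposition~\ref{breakinparts}, Lemma~\ref{simplices} and Lemma~\ref{multisimp}.

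The graph $\mathcal{G}$ is the disjoint union of a rank-$h$ simplex on a vertex set $\Omega_1$ of size $h+1$ (the top row) and a rank-$r$ simplex on a vertex set $\Omega_2$ of size $r+1$ (the bottom row), where the labels $0,\dots,h-1$ occur in both components and the labels $h,\dots,r-1$ occur only in the bottom one. Since each component is a path, two edges with non-consecutive labels are never adjacent, so $G=\langle\rho_0,\dots,\rho_{r-1}\rangle$ is an sggi. Writing $\pi_1$ and $\pi_2$ for the actions of $G$ on $\Omega_1$ and on $\Omega_2$, Lemma~\ref{simplices} gives that $\pi_1$ and $\pi_2$ are surjections onto $S_{h+1}$ and $S_{r+1}$ respectively, and $g\mapsto(\pi_1(g),\pi_2(g))$ is injective because $\mathcal{G}$ is a permutation representation graph; hence $G\leq S_{h+1}\times S_{r+1}$. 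For the reverse inclusion I would look at $N:=\ker\pi_1$: it contains $\rho_{r-1}$ (whose label $r-1$ satisfies $r-1\geq h$, so $\rho_{r-1}$ acts trivially on $\Omega_1$), and $\pi_2(\rho_{r-1})$ is a transposition. As $\pi_2(N)$ is a normal subgroup of $S_{r+1}$ containing an odd permutation, it must equal $S_{r+1}$, so $\vert G\vert=\vert S_{h+1}\vert\cdot\vert N\vert\geq (h+1)!\,(r+1)!$ and therefore $G=S_{h+1}\times S_{r+1}$, acting naturally on $\Omega_1\sqcup\Omega_2$.

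For the intersection property I would induct on $r\geq 2$, the base case $r=2$ (so $h=1$) being a rank-$2$ sggi and hence automatically a string C-group. For the inductive step I would invoke Proposition~\ref{breakinparts}. Removing $\rho_0$ (and relabelling by subtracting $1$, and discarding the two vertices that become isolated) turns $\mathcal{G}$ into the graph of the present lemma for the parameters $(r-1,h-1)$ if $h\geq 2$, and into a rank-$(r-1)$ simplex if $h=1$; in either case $G_0$ is a string C-group isomorphic to $S_h\times S_r$. Removing $\rho_{r-1}$ produces the graph for the parameters $(r-1,h)$ when $h\leq r-2$, and a union of two identical rank-$(r-1)$ simplexes when $h=r-1$; by the induction hypothesis, respectively by Lemma~\ref{multisimp}, $G_{r-1}$ is a string C-group, isomorphic to $S_{h+1}\times S_r$, respectively to $S_r$.

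The remaining and, I expect, hardest point is $G_0\cap G_{r-1}=G_{0,r-1}$. Using the natural realisation $G=S_{h+1}\times S_{r+1}$ on $\Omega_1\sqcup\Omega_2$, one identifies $G_0$ as the pointwise stabiliser in $G$ of the initial vertex of each simplex and $G_{r-1}$ as the subgroup fixing the final vertex of $\Omega_2$ while inducing all of $S_{h+1}$ on $\Omega_1$ (with the degenerate case $h=r-1$, where $G_{r-1}$ is instead a diagonally embedded $S_r$, handled separately). Intersecting these explicit subgroups yields a group of order $h!\,(r-1)!$ in general, and $(r-1)!$ in the degenerate case. On the other hand $G_{0,r-1}=\langle\rho_1,\dots,\rho_{r-2}\rangle$ is, by the induction hypothesis (or by Lemma~\ref{multisimp} in the boundary situations $h=1$ or $h=r-1$), a group of exactly that same order which is contained in $G_0\cap G_{r-1}$; equality of orders then gives $G_0\cap G_{r-1}=G_{0,r-1}$, and Proposition~\ref{breakinparts} closes the induction. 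The delicate part of the whole argument is the bookkeeping around the boundary values of $h$ and the smallest values of $r$, where the two simplex components coincide and the product structure partially collapses to a single symmetric group.
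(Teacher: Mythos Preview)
Your argument is correct. In the paper this lemma is not actually proved but simply cited from \cite{RankofPolAltGroups}; the authors did draft a proof (it appears in a commented-out block in the \LaTeX\ source), and that draft takes a different route from yours. Their approach first handles the extremal case $r=h+1$ by an induction on $h$ via Proposition~\ref{breakinparts} (with $\Gamma^{(h)}_0\cong\Gamma^{(h-1)}$ and $\Gamma^{(h)}_h$ a union of two identical simplexes handled by Lemma~\ref{multisimp}), and then obtains the general case $r>h+1$ by repeatedly applying the edge-addition of Lemma~\ref{basecase1} to the dual of the $r=h+1$ graph. Your approach instead runs a single induction on $r$, valid for all admissible $h$ at once, and verifies $G_0\cap G_{r-1}=G_{0,r-1}$ by a direct order count inside the concrete realisation $G=S_{h+1}\times S_{r+1}$. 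This is more self-contained --- it does not rely on Lemma~\ref{basecase1} --- at the cost of the case analysis you flag at the boundary values $h=1$ and $h=r-1$; the paper's route trades that bookkeeping for an appeal to the gluing machinery developed earlier.
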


Note that it is not, in general, true to say that any union of simplexes is a CPR graph. Consider, for example, the following union of simplexes representing a group $\Gamma$.

\begin{center}
\begin{tikzpicture}
    \vertex[] (1) at (0,1) {};
    \vertex[] (2) at (1,1) {};
    \vertex[] (3) at (2,1) {};
    \vertex[] (4) at (3,1) {};

    \vertex[] (5) at (1,0) {};
    \vertex[] (6) at (2,0) {};

    \draw[-] (1) -- (2)
     node[pos=0.5,above] {$0$};
    \draw[-] (3) -- (2)
     node[pos=0.5,above] {$1$};
    \draw[-] (3) -- (4)
     node[pos=0.5,above] {$2$};

    \draw[-] (5) -- (6)
     node[pos=0.5,below] {$1$};
\end{tikzpicture}
\end{center}

By Lemma \ref{result1}, $\Gamma_0$ and $\Gamma_2$ are isomorphic to $S_3\times C_2$ so that $C_2\times C_2\cong \Gamma_0\cap\Gamma_2\neq \Gamma_{0,2} \cong C_2$.

\begin{lemma}\label{wreathsimp}
    For any integer $r\geq 2$, the following permutation representation graph $\mathcal{G}^{(r)}$ is a CPR graph for $C_2 \wr S_r$.

    \begin{center}
    \resizebox{\columnwidth}{!}{
\begin{tikzpicture}
\vertex[] (1) at (0,2) {};
\vertex[] (2) at (2,2) {};
\vertex[] (3) at (4,2) {};
\vertex[] (4) at (6,2) {};
\vertex[] (5) at (8,2) {};
\vertex[] (6) at (10,2) {};
\vertex[] (7) at (12,2) {};
\vertex[] (8) at (14,2) {};

\draw[-] (1) -- (2)
 node[pos=0.5,above] {$r-1$};
\draw[dashed] (3) -- (2);
\draw[-] (3) -- (4)
 node[pos=0.5,above] {$1$};
\draw[-] (5) -- (4)
 node[pos=0.5,above] {$0$};
\draw[-] (5) -- (6)
 node[pos=0.5,above] {$1$};
\draw[dashed] (7) -- (6);
\draw[-] (7) --(8)
 node[pos=0.5,above] {$r-1$};
\end{tikzpicture}}
\end{center}

\end{lemma}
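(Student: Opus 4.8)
The plan is to establish both claims at once — that $\mathcal{G}^{(r)}$ represents $C_2\wr S_r$, and that this group, with the indicated generators, is a string C-group — by induction on $r$, the inductive step resting on Proposition~\ref{breakinparts}. First I would fix coordinates on the $2r$ vertices: let the central $0$-edge join $v_{1,1}$ and $v_{1,2}$, and let the two arms emanating from these be $v_{1,t}-v_{2,t}-\cdots-v_{r,t}$ for $t\in\{1,2\}$, with the edge $\{v_{j,t},v_{j+1,t}\}$ carrying label $j$. Then $\rho_0=(v_{1,1}\,v_{1,2})$ and $\rho_j=(v_{j,1}\,v_{j+1,1})(v_{j,2}\,v_{j+1,2})$ for $1\le j\le r-1$. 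Reading off the action, $\langle\rho_1,\dots,\rho_{r-1}\rangle$ is the diagonal copy of $S_r$ inside $\mathrm{Sym}\{v_{1,1},\dots,v_{r,1}\}\times\mathrm{Sym}\{v_{1,2},\dots,v_{r,2}\}$, it is transitive on the $r$ ``rungs'' $\{v_{j,1},v_{j,2}\}$, and its conjugates of $\rho_0$ are exactly the rung transpositions $(v_{j,1}\,v_{j,2})$, which generate the base group $C_2^{r}$. Since the diagonal $S_r$ normalises this base group and meets it trivially, $\Gamma^{(r)}=\langle C_2^{r},\ \text{diagonal }S_r\rangle$ has order $2^r\, r!$ and equals $C_2\wr S_r$ in its imprimitive action on $2r$ points.

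For the string C-group property, the base case $r=2$ is immediate: $\Gamma^{(2)}\cong C_2\wr S_2$ is dihedral of order $8$, hence a rank-$2$ string C-group. For the inductive step I would assume $\mathcal{G}^{(r-1)}$ is a CPR graph and verify the three conditions of Proposition~\ref{breakinparts} for $\Gamma^{(r)}=\langle\rho_0,\dots,\rho_{r-1}\rangle$. Deleting the two $(r-1)$-edges of $\mathcal{G}^{(r)}$ isolates $v_{r,1},v_{r,2}$ and leaves a copy of $\mathcal{G}^{(r-1)}$, so $\Gamma^{(r)}_{r-1}$ is (up to two fixed points) the group of $\mathcal{G}^{(r-1)}$ and is a string C-group by the induction hypothesis. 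Deleting the central $0$-edge splits the graph into two identical paths with consecutive labels $1,\dots,r-1$, that is, two identical rank-$(r-1)$ simplexes up to the global shift $j\mapsto j-1$ of labels (which affects neither the string nor the intersection property), so $\Gamma^{(r)}_0\cong S_r$ is a string C-group by Lemma~\ref{multisimp}. Similarly, deleting all $0$- and $(r-1)$-edges leaves two identical rank-$(r-2)$ simplexes together with isolated vertices, so $\Gamma^{(r)}_{0,r-1}\cong S_{r-1}$, acting as the diagonal $S_{r-1}$ on the inner vertices $\{v_{j,t}:1\le j\le r-1\}$. It remains to check $\Gamma^{(r)}_0\cap\Gamma^{(r)}_{r-1}=\Gamma^{(r)}_{0,r-1}$; the inclusion $\supseteq$ is automatic. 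For $\subseteq$, observe that $\Gamma^{(r)}_0$ acts on the two arms as $\sigma\times\sigma$ with $\sigma$ ranging over the symmetric group of an arm, whereas $\Gamma^{(r)}_{r-1}$ fixes $v_{r,1}$ and $v_{r,2}$; hence an element of the intersection acts as $\sigma\times\sigma$ with $\sigma$ fixing the last vertex of each arm, so $\sigma\in S_{r-1}$ and the element lies in the diagonal $S_{r-1}=\Gamma^{(r)}_{0,r-1}$. Proposition~\ref{breakinparts} then completes the induction.

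I expect the only delicate point to be this last intersection-property check: it requires pinning down $\Gamma^{(r)}_0$ precisely as a diagonal symmetric group and lining up the two arms of the $0$-split with the coordinates carefully enough that ``$\Gamma^{(r)}_{r-1}$ fixes the two extreme vertices'' translates correctly into ``$\sigma\in S_{r-1}$''. The remaining steps — recognising the quotient graphs as (shifted) unions of identical simplexes or as $\mathcal{G}^{(r-1)}$, and invoking Lemma~\ref{multisimp} and the induction hypothesis — are routine unwindings of the figure.
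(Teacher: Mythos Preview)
Your proof is correct and follows essentially the same approach as the paper: both identify $\Gamma^{(r)}$ as $C_2\wr S_r$ via the block system of ``rungs'' and the conjugates of $\rho_0$, then prove the intersection property by induction on $r$ using Proposition~\ref{breakinparts}, with $\Gamma^{(r)}_{r-1}$ handled by the inductive hypothesis, $\Gamma^{(r)}_0\cong S_r$ by Lemma~\ref{multisimp}, and the intersection $\Gamma^{(r)}_0\cap\Gamma^{(r)}_{r-1}=\Gamma^{(r)}_{0,r-1}$ checked by constraining elements of the diagonal $S_r$ to fix the two extreme vertices. Your write-up of the intersection check is in fact more explicit than the paper's, which simply asserts the result ``looking at the action on the orbits of $\Gamma^{(r)}_{0,r-1}$''.
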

\begin{proof}
 Let $\Gamma^{(r)}=\langle \rho_0,\rho_1,...,\rho_{r-1}\rangle$ be the group represented by $\mathcal{G}^{(r)}$.
 First note that $\Gamma^{(r)}$ acts imprimitively on the vertex set of $\mathcal{G}^{(r)}$ with $r$ blocks of size two $\mathcal{B}_i=\{a_i,b_i\}$ ($i\in\{1,...,r\}$) as illustrated below.

 \begin{center}
    \resizebox{\columnwidth}{!}{
\begin{tikzpicture}
\vertex[color=red,label=below:$a_r$] (1) at (0,2) {};
\vertex[color=blue,label=below:$a_{r-1}$] (2) at (2,2) {};
\vertex[color=green,label=below:$a_2$] (3) at (4,2) {};
\vertex[color=orange,label=below:$a_1$] (4) at (6,2) {};
\vertex[color=orange,label=below:$b_1$] (5) at (8,2) {};
\vertex[color=green,label=below:$b_2$] (6) at (10,2) {};
\vertex[color=blue,label=below:$b_{r-1}$] (7) at (12,2) {};
\vertex[color=red,label=below:$b_r$] (8) at (14,2) {};

\draw[-] (1) -- (2)
 node[pos=0.5,above] {$r-1$};
\draw[dashed] (3) -- (2);
\draw[-] (3) -- (4)
 node[pos=0.5,above] {$1$};
\draw[-] (5) -- (4)
 node[pos=0.5,above] {$0$};
\draw[-] (5) -- (6)
 node[pos=0.5,above] {$1$};
\draw[dashed] (7) -- (6);
\draw[-] (7) --(8)
 node[pos=0.5,above] {$r-1$};
\end{tikzpicture}}
\end{center}

By Theorem \ref{wreath}, $\Gamma^{(r)}\leq C_2\wr S_r$. By Lemma \ref{simplices}, $\Gamma^{(r)}$ acts as $S_r$ on $\{\mathcal{B}_1,...,\mathcal{B}_r\}$. Moreover, $\rho_0=(a_1,b_1)$ and, for any $j\in\{2,...,r\}$, $\rho_0^{\prod_{i=1}^{j-1} \rho_i}=(a_j,b_j)$. Hence $\Gamma^{(r)}$ is isomorphic to the full wreath product $C_2\wr S_r$.
As it is clear that $\Gamma^{(r)}$ satisfies the string property, we are left with showing that it satisfies the intersection property.
We do so by induction on $r\geq 2$.

For $r=2$, $\Gamma^{(r)}$ is dihedral and hence clearly a string C-group.

Suppose that $\Gamma^{(r-1)}$ is a string C-group for some $r\geq 3$.
By induction hypothesis, $\Gamma^{(r)}_{r-1}$ is a string C-group. By Lemma \ref{multisimp}, $\Gamma^{(r)}_0$ is also string C-group, isomorphic to $S_r$. Looking at the action of $\Gamma^{(r)}_0\cap\Gamma^{(r)}_{r-1}\leq \Gamma^{(r)}_0$ on the orbits of $\Gamma^{(r)}_{0,r-1}$, we readily see that $\Gamma^{(r)}_0\cap\Gamma^{(r)}_{r-1}\leq S_{r-1}\cong \Gamma^{(r)}_{0,r-1}$ and thus $\Gamma^{(r)}_0\cap\Gamma^{(r)}_{r-1}=\Gamma^{(r)}_{0,r-1}$. By Proposition \ref{breakinparts}, $\Gamma^{(r)}$ is a string C-group and, by induction, this is true for all $r\geq 2$.
\end{proof}

\begin{lemma}\label{lemme1}
    For any integer $r\geq 3$, the following permutation representation graph $\mathcal{G}^{(r)}$ is a CPR graph for $S_{r+2}\times S_r$.

\begin{center}
\resizebox{0.8\columnwidth}{!}{
\begin{tikzpicture}
\vertex[label=below:$1$] (1) at (0,2) {};
\vertex[label=below:$2$] (2) at (2,2) {};
\vertex[label=below:$3$] (3) at (4,2) {};
\vertex[label=below:$r-1$] (4) at (6,2) {};
\vertex[label=below:$r$] (5) at (8,2) {};
\vertex[label=below:$r+1$] (6) at (10,2) {};
\vertex[label=below:$r+2$] (7) at (12,2) {};

\vertex[] (8) at (0,0) {};
\vertex[] (9) at (2,0) {};
\vertex[] (10) at (4,0) {};
\vertex[] (11) at (6,0) {};
\vertex[] (12) at (8,0) {};

\draw[-] (1) -- (2)
 node[pos=0.5,above] {$0$};
\draw[-] (2) -- (3)
 node[pos=0.5,above] {$1$};
\draw[dashed] (3) -- (4);
\draw[-] (5) -- (4)
 node[pos=0.5,above] {$r-2$};
\draw[-] (5) -- (6)
 node[pos=0.5,above] {$r-1$};
\draw[-] (7) -- (6)
 node[pos=0.5,above] {$r-2$}; 

\draw[-] (9) --(8)
 node[pos=0.5,below] {$0$};
\draw[-] (10) --(9)
 node[pos=0.5,below] {$1$};
\draw[dashed] (11) -- (10);
\draw[-] (12) --(11)
 node[pos=0.5,below] {$r-2$};
\end{tikzpicture}}
\end{center}
    
\end{lemma}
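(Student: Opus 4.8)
The plan is to first identify the group $\Gamma:=\Gamma^{(r)}=\langle\rho_0,\dots,\rho_{r-1}\rangle$ represented by $\mathcal{G}^{(r)}$, and then to prove the intersection property by induction on $r\geq 3$ via Proposition~\ref{breakinparts}, applied to $\Gamma_0$ and $\Gamma_{r-1}$. For the identification, note that $\mathcal{G}^{(r)}$ has exactly two connected components, hence $\Gamma$ has two orbits: a ``top'' one on the $r+2$ vertices $1,\dots,r+2$ and a ``bottom'' one on the $r$ vertices of the lower simplex (I write $w_1,\dots,w_r$ for the latter, with $w_1$ the degree-one vertex incident to the $0$-edge). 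The bottom component is a rank-$(r-1)$ simplex, so by Lemma~\ref{simplices} it induces $S_r$; the top component is connected and of the shape required by Lemma~\ref{howact1} with $i=0$ (vertex $1$ sits alone on one side of the unique $0$-edge and everything on the other side carries only labels $\geq 1$), so it induces $S_{r+2}$. Thus $\Gamma\leq S_{r+2}\times S_r$. Since $\rho_{r-1}$ restricts to $(r,r+1)$ on the top orbit and to the identity on the bottom orbit, and $\Gamma$ induces the $2$-transitive group $S_{r+2}$ on the top orbit, every transposition of the top orbit occurs as $\rho_{r-1}^{\sigma}$ for some $\sigma\in\Gamma$, and each such conjugate still fixes the bottom orbit pointwise; hence $S_{r+2}\times 1\leq\Gamma$, and since $\Gamma$ surjects onto the $S_r$ on the bottom orbit, a count of orders gives $\Gamma\cong S_{r+2}\times S_r$.

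For the induction, the base case $r=3$ is easy: $\Gamma_0$ and $\Gamma_2$ have rank $2$, hence are string C-groups, and a short direct verification gives $\Gamma_0\cap\Gamma_2=\langle\rho_1\rangle=\Gamma_{0,2}$, so $\Gamma^{(3)}$ is a string C-group by Proposition~\ref{breakinparts}. Assume $r\geq 4$. Deleting the two $0$-edges isolates the vertices $1$ and $w_1$, and the rest of $\mathcal{G}^{(r)}$, after lowering every label by $1$, is exactly $\mathcal{G}^{(r-1)}$; hence $\Gamma_0$ is a string C-group by the induction hypothesis (and $\Gamma_0\cong S_{r+1}\times S_{r-1}$, again by the first paragraph). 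Deleting the unique $(r-1)$-edge splits the top component into a rank-$(r-1)$ simplex on $\{1,\dots,r\}$ and a lone $(r-2)$-edge on $\{r+1,r+2\}$, the bottom simplex being unchanged; thus the graph of $\Gamma_{r-1}$ is obtained from the group represented by the two identical rank-$(r-1)$ simplexes --- which by Lemma~\ref{multisimp} is the string C-group $S_r$ with generators $\sigma_0,\dots,\sigma_{r-2}$ --- by replacing the last generator $\sigma_{r-2}$ with $\sigma_{r-2}\tau$, where $\tau=(r+1,r+2)\notin S_r$. So $\Gamma_{r-1}$ is the sesqui-extension of the string C-group $S_r$ with respect to its \emph{last} involutory generator, whence $\Gamma_{r-1}$ is a string C-group by Proposition~\ref{extremesesqui} applied to the dual; by Proposition~\ref{sesqui} we get $\Gamma_{r-1}\cong S_r\times C_2$, since $\tau=(\rho_{r-3}\rho_{r-2})^3\in\Gamma_{r-1}$.

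It remains to check $\Gamma_0\cap\Gamma_{r-1}=\Gamma_{0,r-1}$ for $r\geq 4$. The orbits of $\Gamma_{0,r-1}$ are $\{1\}$, $\{2,\dots,r\}$, $\{r+1,r+2\}$, $\{w_1\}$, $\{w_2,\dots,w_r\}$, and $\Gamma_{0,r-1}$ acts on the two $(r-2)$-simplex remnants $\{2,\dots,r\}$ and $\{w_2,\dots,w_r\}$ as a single diagonal copy of $S_{r-1}$, while $\rho_{r-2}$ also swaps $r+1$ with $r+2$; since again $(\rho_{r-3}\rho_{r-2})^3=(r+1,r+2)\in\Gamma_{0,r-1}$, one gets $\Gamma_{0,r-1}\cong S_{r-1}\times C_2$ with the two constituents acting independently. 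Now take $g\in\Gamma_0\cap\Gamma_{r-1}$: lying in $\Gamma_0$ it fixes $1$ and $w_1$, and lying in $\Gamma_{r-1}\cong S_r\times C_2$ it stabilises $\{1,\dots,r\}$, $\{r+1,r+2\}$ and $\{w_1,\dots,w_r\}$ with its actions on the two $r$-point simplexes agreeing under $j\leftrightarrow w_j$; hence $g$ is determined by some $\pi\in S_{r-1}$ acting on $\{2,\dots,r\}$ and (the same way) on $\{w_2,\dots,w_r\}$, together with an element of $\mathrm{Sym}\{r+1,r+2\}$. Conversely, every such element lies in $\Gamma_{r-1}$ and also in $\Gamma_0$ (which acts as the full direct product $S_{r+1}\times S_{r-1}$ on $\{2,\dots,r+2\}\sqcup\{w_2,\dots,w_r\}$), so $\Gamma_0\cap\Gamma_{r-1}$ is exactly this ``diagonal $S_{r-1}$ times $C_2$'', which is precisely $\Gamma_{0,r-1}$. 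Proposition~\ref{breakinparts} then completes the induction, and the displayed group is $S_{r+2}\times S_r$ by the first paragraph.

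The step I expect to be the main obstacle is this last one. The delicate point is that one needs the exact structure of $\Gamma_{0,r-1}$, which is awkward because that group is $C_2$ when $r=3$ but $S_{r-1}\times C_2$ when $r\geq 4$ (so the base case really must be handled separately), and one must carefully reconcile the ``diagonal'' constraints forced by membership in $\Gamma_{r-1}$ with the direct-product structure of $\Gamma_0$ in order to rule out any extra elements in $\Gamma_0\cap\Gamma_{r-1}$.
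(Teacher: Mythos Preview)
Your proof is correct and follows essentially the same route as the paper: identify the two orbits as $S_{r+2}$ and $S_r$ with independent actions, then induct on $r$ via Proposition~\ref{breakinparts}, recognising $\Gamma_0$ as $\Gamma^{(r-1)}$ and $\Gamma_{r-1}$ as an $(r-2)$-sesqui-extension of the group of Lemma~\ref{multisimp}. The only differences are cosmetic: you invoke Lemma~\ref{howact1} with $i=0$ for the top orbit where the paper cites \cite[Theorem~2]{CPRSn}, and you spell out the intersection check $\Gamma_0\cap\Gamma_{r-1}=\Gamma_{0,r-1}$ (including the explicit computation $(\rho_{r-3}\rho_{r-2})^3=(r{+}1,r{+}2)$) in more detail than the paper, which simply asserts the relevant isomorphisms $\Gamma_{r-1}\cong S_r\times C_2$ and $\Gamma_{0,r-1}\cong S_{r-1}\times C_2$ and concludes.
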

\begin{proof}
We first show that, for any $r\geq 3$, the permutation group $\Gamma^{(r)}=\langle\rho_0,\rho_1,...,\rho_{r-1}\rangle$ represented by $\mathcal{G}^{(r)}$ is isomorphic to $S_{r+2}\times S_r$.

Let $\mathcal{O}^{(r)}_1$ and $\mathcal{O}^{(r)}_2$ be the vertex sets of the largest, respectively smallest, connected components of $\mathcal{G}^{(r)}$.
The group acting on the $r+2$ vertices of $\mathcal{O}^{(r)}_1$ is isomorphic to $S_{r+2}$ by~\cite[Theorem 2]{CPRSn}.
By Lemma \ref{simplices}, $\Gamma^{(r)}$ acts as $S_r$ on $\mathcal{O}^{(r)}_2$.
Finally, since $\Gamma^{(r)}$ acts as $S_{r+2}$ on $\mathcal{O}^{(r)}_1$, for any $i\in\{1,...,r+1\}$, there is $\sigma\in\Gamma^{(r)}$ such that $(i,i+1)=\rho_{r-1}^\sigma$. These elements all fix $\mathcal{O}^{(r)}_2$ pointwise so $\Gamma^{(r)}\cong S_{r+2}\times S_r$.

To show that $\mathcal{G}^{(r)}$ is a CPR graph or, equivalently, that $\Gamma^{(r)}$ is a string C-group, we first note that $\Gamma^{(r)}$ clearly satisfies the string property and show, by induction on $r$, that it also satisfies the intersection property.

One easily checks that $\Gamma^{(r)}$ is a string C-group when $r=3$, by hand or using {\sc Magma}~\cite{BCP97}.
Suppose that $\Gamma^{(r-1)}$ is a string C-group for some $r\geq 4$.
Then $\Gamma^{(r)}_0$ is a string C-group since it is isomorphic to $\Gamma^{(r-1)}$.
But $\Gamma^{(r)}_{r-1}$ is also a string C-group by Proposition \ref{extremesesqui} since it is an $(r-2)$-sesqui-extension of a rank-$(r-1)$ group that is a string C-group by Lemma \ref{multisimp}.
An argument fairly similar to the one given above to show that $\Gamma^{(r)}\cong S_{r+2}\times S_r$ yields $\Gamma^{(r)}_{0,r-1}\cong S_{r-1}\times C_2$ and $\Gamma^{(r)}_{r-1}\cong S_r\times C_2$ so that $\Gamma^{(r)}_0\cap\Gamma^{(r)}_{r-1}=\Gamma^{(r)}_{0,r-1}$. By Proposition \ref{breakinparts}, $\Gamma^{(r)}$ is a string C-group and thus, by induction, this is true for all $r\geq 3$.
\end{proof}

\begin{comment}
Another proof of Lemma \ref{lemme1} : Let $G=\langle \tau_0,\tau_1,...,\tau_r\rangle$ be the permutation group represented by the following permutation representation graph.

\begin{center}
\resizebox{0.8\columnwidth}{!}{
\begin{tikzpicture}
\vertex[label=below:$1$] (1) at (0,2) {};
\vertex[label=below:$2$] (2) at (2,2) {};
\vertex[label=below:$3$] (3) at (4,2) {};
\vertex[label=below:$r-1$] (4) at (6,2) {};
\vertex[label=below:$r$] (5) at (8,2) {};
\vertex[label=below:$r+1$] (6) at (10,2) {};
\vertex[label=below:$r+2$] (7) at (12,2) {};

\vertex[] (8) at (0,0) {};
\vertex[] (9) at (2,0) {};
\vertex[] (10) at (4,0) {};
\vertex[] (11) at (6,0) {};
\vertex[] (12) at (8,0) {};

\draw[-] (1) -- (2)
 node[pos=0.5,above] {$0$};
\draw[-] (2) -- (3)
 node[pos=0.5,above] {$1$};
\draw[dashed] (3) -- (4);
\draw[-] (5) -- (4)
 node[pos=0.5,above] {$r-2$};
\draw[-] (5) -- (6)
 node[pos=0.5,above] {$r-1$};
\draw[-] (7) -- (6)
 node[pos=0.5,above] {$r$}; 

\draw[-] (9) --(8)
 node[pos=0.5,below] {$0$};
\draw[-] (10) --(9)
 node[pos=0.5,below] {$1$};
\draw[dashed] (11) -- (10);
\draw[-] (12) --(11)
 node[pos=0.5,below] {$r-2$};
\end{tikzpicture}}
\end{center}

This is a CPR graph for $S_{r+2}\times S_{r}$ by Lemma \ref{result1} so $G$ is a string C-group.

Our result follows by applying the RRT to the dual of $(G,\{\tau_0,...,\tau_r)$, noting that $\tau_{r-3}\tau_{r-2}$ has odd order three.
\end{comment}

    Note that $\mathcal{G}^{(r)}$ as defined in Lemma \ref{lemme1} is also a CPR graph for $r=2$ since it then represents a dihedral group but, in this case, it represents $C_2\wr C_2\cong D_8$.
\begin{center}
\begin{tikzpicture}
    \vertex[color=red] (1) at (0,1) {};
    \vertex[color=blue] (2) at (1,1) {};
    \vertex[color=blue] (3) at (2,1) {};
    \vertex[color=red] (4) at (3,1) {};
    \vertex[] (5) at (0,0) {};
    \vertex[] (6) at (1,0) {};

    \draw[-] (1) -- (2)
    node[pos=0.5,above] {$0$};
    \draw[-] (3) -- (2)
    node[pos=0.5,above] {$1$};
    \draw[-] (3) -- (4)
    node[pos=0.5,above] {$0$};
    \draw[-] (5) -- (6)
    node[pos=0.5,below] {$0$};
\end{tikzpicture}
\end{center}

With these lemmas, we can now give a counterexample to Conjecture \ref{false}. We do so in two propositions: the first gives a CPR graph and the second establishes that the gluing method described in Conjecture \ref{false} is not successful when applied to this CPR graph.

Observe that the permutation representation graph considered in Proposition \ref{counterexample2} is graph (X) of~\cite[Table 6]{woof} with $h\leq r-4$ thus Proposition \ref{counterexample2} already disproves the conjecture of Cameron, Fernandes and Leemans. 

\begin{proposition}\label{counterexample1}
    For any integers $r\geq 3$ and $1\leq h\leq r-2$, the following permutation representation graph $\mathcal{G}$ is a CPR graph for $S_{r+h+1}$.
    \begin{center}
    \resizebox{\columnwidth}{!}{
\begin{tikzpicture}
\vertex[] (1) at (0,2) {};
\vertex[] (2) at (2,2) {};
\vertex[] (3) at (4,2) {};
\vertex[] (4) at (6,2) {};
\vertex[] (5) at (8,2) {};
\vertex[] (6) at (10,2) {};
\vertex[] (7) at (12,2) {};
\vertex[] (8) at (14,2) {};
\vertex[] (9) at (16,2) {};
\vertex[] (10) at (18,2) {};
\vertex[] (11) at (20,2) {};

\draw[-] (1) -- (2)
 node[pos=0.5,above] {$h$};
\draw[dashed] (3) -- (2);
\draw[-] (3) -- (4)
 node[pos=0.5,above] {0};
\draw[dashed] (5) -- (4);
\draw[-] (5) -- (6)
 node[pos=0.5,above] {$h$};
\draw[-] (7) -- (6)
 node[pos=0.5,above] {$h+1$};
\draw[-] (7) -- (8)
 node[pos=0.5,above] {$h+2$};
\draw[-] (9) --(8)
 node[pos=0.5,above] {$h+3$};
\draw[dashed] (9) -- (10);
\draw[-] (10) --(11)
 node[pos=0.5,above] {$r-1$};
\end{tikzpicture}}
\end{center}
\end{proposition}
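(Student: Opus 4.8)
The plan is to argue by induction on $r$, using Proposition~\ref{breakinparts} as the main tool. Write $\Gamma = \langle\rho_0,\dots,\rho_{r-1}\rangle$ for the group represented by $\mathcal G$; its left-hand part is a ``valley'', a path whose labels read $h,h-1,\dots,1,0,1,\dots,h-1,h$, to the right peak of which a simplex on labels $h+1,\dots,r-1$ is attached. Deleting the unique $0$-labelled edge splits $\mathcal G$ into two components whose vertex sets $\mathcal O_1$ (carrying a rank-$h$ simplex, $h+1$ vertices) and $\mathcal O_2$ (carrying a rank-$(r-1)$ simplex, $r$ vertices) partition $V(\mathcal G)$; after decreasing every label by one, $\mathcal G_0$ is exactly the graph of Lemma~\ref{result1} with parameters $(h,r-1)$ (the hypotheses $1\le h\le (r-1)-1$ and $r-1\ge 2$ hold), so $\Gamma_0 = \langle\rho_1,\dots,\rho_{r-1}\rangle$ is a string C-group isomorphic to $S_{h+1}\times S_r$. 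I would then upgrade this to $\Gamma_0 = \mathrm{Sym}(\mathcal O_1)\times\mathrm{Sym}(\mathcal O_2)$ as a permutation group: $\rho_{r-1}$ fixes $\mathcal O_1$ pointwise while $\Gamma_0$ acts as $\mathrm{Sym}(\mathcal O_2)$ on $\mathcal O_2$ (Lemma~\ref{simplices}), so the $\Gamma_0$-conjugates of $\rho_{r-1}$ yield all of $\mathrm{Sym}(\mathcal O_2)$ acting trivially on $\mathcal O_1$; multiplying $\rho_1$ by its $\mathcal O_2$-part then gives a transposition supported on $\mathcal O_1$, whose conjugates yield all of $\mathrm{Sym}(\mathcal O_1)$ acting trivially on $\mathcal O_2$; comparing orders gives equality. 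Since $\rho_0$ is a transposition exchanging a vertex of $\mathcal O_1$ with one of $\mathcal O_2$, the standard fact that $S_a\times S_b$ together with a transposition joining the two factors generate $S_{a+b}$ gives $\Gamma = \langle\Gamma_0,\rho_0\rangle = \mathrm{Sym}(V(\mathcal G))\cong S_{r+h+1}$.

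It then remains to check the three conditions of Proposition~\ref{breakinparts} for $\Gamma_0$ and $\Gamma_{r-1} = \langle\rho_0,\dots,\rho_{r-2}\rangle$; the first is done. The permutation representation graph of $\Gamma_{r-1}$ is the graph $\mathcal G^{(r-1,h)}$ of this proposition with $r$ replaced by $r-1$, together with one isolated vertex $z$ (the far endpoint of the $(r-1)$-edge). If $r\ge h+3$, then $(r-1,h)$ satisfies the hypotheses of the proposition, so by induction $\mathcal G^{(r-1,h)}$ is a CPR graph for $S_{r+h}$; hence $\Gamma_{r-1}$ is a string C-group equal to $\mathrm{Sym}(V(\mathcal G)\setminus\{z\})$ and fixing $z\in\mathcal O_2$, so that $\Gamma_0\cap\Gamma_{r-1} = \mathrm{Sym}(\mathcal O_1)\times\mathrm{Sym}(\mathcal O_2\setminus\{z\})$; the simplex argument above, applied now to $\rho_{r-2}$ (still supported in $\mathcal O_2\setminus\{z\}$ since $r-2>h$), shows $\Gamma_{0,r-1} = \langle\rho_1,\dots,\rho_{r-2}\rangle$ equals this same group, so the intersection condition holds. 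If instead $r = h+2$, the base of the induction, then $\mathcal G^{(r-1,h)}$ is the rank-$(h+1)$ valley of Lemma~\ref{wreathsimp}, so $\Gamma_{r-1}\cong C_2\wr S_{h+1}$ is a string C-group consisting, on the $2h+2$ valley vertices, of exactly the permutations preserving the block system $\mathcal B = \{\{v_i,v_{2h+1-i}\} : 0\le i\le h\}$, while fixing $z$; meanwhile $\mathcal G_{0,h+1}$ is two identical rank-$h$ simplexes plus isolated vertices, so $\Gamma_{0,h+1}\cong S_{h+1}$ by Lemma~\ref{multisimp}, realised inside $\mathrm{Sym}(\mathcal O_1)\times\mathrm{Sym}(\mathcal O_2\setminus\{z\})$ as the diagonal subgroup with respect to the bijection $v_i\mapsto v_{2h+1-i}$. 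Since $\Gamma_{r-1}$ fixes $z$ we get $\Gamma_0\cap\Gamma_{h+1}\le\mathrm{Sym}(\mathcal O_1)\times\mathrm{Sym}(\mathcal O_2\setminus\{z\})$, and an element of that group preserves $\mathcal B$ only when its two coordinates correspond under $v_i\mapsto v_{2h+1-i}$, that is, only when it lies in the diagonal $\Gamma_{0,h+1}$; the reverse inclusion being automatic, the intersection condition holds here too. In both cases Proposition~\ref{breakinparts} shows $\Gamma$ is a string C-group, which together with the first paragraph completes the induction.

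I expect the main obstacle to be the intersection condition $\Gamma_0\cap\Gamma_{r-1} = \Gamma_{0,r-1}$ in the base case $r = h+2$: there $\Gamma_{r-1}$ is a genuine wreath product and $\Gamma_{0,r-1}$ only a diagonal copy of $S_{h+1}$, so one has to exploit the block structure of $\Gamma_{r-1}$ to rule out the larger candidates for the intersection, whereas for $r\ge h+3$ every group in sight is a full symmetric group on an orbit and the verification is routine. A secondary point needing care is the passage from the abstract isomorphism $\Gamma_0\cong S_{h+1}\times S_r$ given by Lemma~\ref{result1} to the assertion that $\Gamma_0$ is the full direct product of the symmetric groups on its two orbits, since that is what makes the computation of $\Gamma$ and of the intersections work.
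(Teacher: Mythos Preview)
Your proof is correct and follows essentially the same approach as the paper: the base case $r=h+2$ is handled identically via Proposition~\ref{breakinparts} with $\Gamma_0$ from Lemma~\ref{result1}, $\Gamma_{h+1}$ from Lemma~\ref{wreathsimp}, and $\Gamma_{0,h+1}$ from Lemma~\ref{multisimp}. The only difference is that for $r>h+2$ the paper invokes Lemma~\ref{basecase1} (repeated edge-addition on the dual) rather than carrying out the induction on $r$ directly as you do; your inductive step effectively reproves that lemma in this special setting, and your argument for $\Gamma\cong S_{r+h+1}$ via the transposition $\rho_0$ joining the two $\Gamma_0$-orbits replaces the paper's appeal to maximality of $S_{h+1}\times S_{h+2}$ in $S_{2h+3}$.
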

\begin{proof}
We first prove that, for any $h\in\mathbb{N}_0$, the following permutation representation graph $\mathcal{G}''$ is a CPR graph for $S_{2h+3}$.

    \begin{center}
    \resizebox{0.8\columnwidth}{!}{
\begin{tikzpicture}
\vertex[] (1) at (0,2) {};
\vertex[] (2) at (2,2) {};
\vertex[] (3) at (4,2) {};
\vertex[] (4) at (6,2) {};
\vertex[] (5) at (8,2) {};
\vertex[] (6) at (10,2) {};
\vertex[] (7) at (12,2) {};

\draw[-] (1) -- (2)
 node[pos=0.5,above] {$h$};
\draw[dashed] (3) -- (2);
\draw[-] (3) -- (4)
 node[pos=0.5,above] {0};
\draw[dashed] (5) -- (4);
\draw[-] (5) -- (6)
 node[pos=0.5,above] {$h$};
\draw[-] (7) -- (6)
 node[pos=0.5,above] {$h+1$};
\end{tikzpicture}}
\end{center}

This graph corresponds to $\mathcal{G}$ when $r=h+2$, to a subgraph of $\mathcal{G}$ otherwise. 
Let $\Gamma''$ be the group represented by $\mathcal{G}''$.
%One can easily check, by hand or using {\sc Magma}, that $\mathcal{G}''$ is a CPR graph for $S_5$ when $h=1$. It is polytope number $1$ for $S_5$ in the atlas of polytopes~\cite{atlasleemansvauthier}.
%Now suppose that $\mathcal{G}''$ is a CPR graph for some $h\geq 1$.
By Lemma \ref{result1}, $\Gamma''_0$ is a string C-group isomorphic to $S_{h+1}\times S_{h+2}$. By Lemma \ref{wreathsimp}, $\Gamma''_{h+1}$ is a string C-group isomorphic to $C_2 \wr S_{h+1}$ and, by Lemma \ref{multisimp},  $\Gamma''_{0,h+1}\cong S_{h+1}$. Looking at the actions of $\Gamma''_0$ and $\Gamma''_{h+1}$ on the orbits of $\Gamma''_{0,h+1}$, we readily see that $\Gamma''_0\cap\Gamma''_{h+1}=\Gamma''_{0,h+1}$.
By Proposition \ref{breakinparts}, $\Gamma''$ is a string C-group and, equivalently, $\mathcal{G}''$ is a CPR graph.
Since $\Gamma''_0\cong S_{h+1}\times S_{h+2}$ and $S_{h+1}\times S_{h+2}$ is maximal in $S_{2h+3}$, $\Gamma''\cong S_{2h+3}$.

When $r>h+2$, repeated applications of the edge-addition procedure described in Lemma \ref{basecase1} to the dual of $\mathcal{G}''$ followed by a final dualization yield $\mathcal{G}$. By Lemma \ref{basecase1}, since $\mathcal{G}$ is connected and has $r+h+1$ vertices, it is a CPR graph and represents $S_{r+h+1}$, as claimed.
\end{proof}

%With these lemmas, we can now give a counterexample to Conjecture \ref{false}. Observe that this counterexample is graph (X) of Table~\ref{notperfect} with $h\leq r-4$. It already disproves the conjecture of Cameron, Fernandes and Leemans. 

\begin{proposition}\label{counterexample2}
For any integers $r\geq 5$ and $1\leq h\leq r-4$, the following permutation representation graph $\mathcal{G}'$ is not a CPR graph (though it does represent $S_{2r-1}$).
    \begin{center}
    \resizebox{\columnwidth}{!}{
\begin{tikzpicture}
\vertex[] (1) at (0,2) {};
\vertex[] (2) at (2,2) {};
\vertex[] (3) at (4,2) {};
\vertex[] (4) at (6,2) {};
\vertex[] (5) at (8,2) {};
\vertex[] (6) at (10,2) {};
\vertex[] (7) at (12,2) {};
\vertex[] (8) at (14,2) {};
\vertex[] (9) at (16,2) {};
\vertex[] (10) at (18,2) {};
\vertex[] (11) at (20,2) {};

\draw[-] (1) -- (2)
 node[pos=0.5,above] {$h$};
\draw[dashed] (3) -- (2);
\draw[-] (3) -- (4)
 node[pos=0.5,above] {0};
\draw[dashed] (5) -- (4);
\draw[-] (5) -- (6)
 node[pos=0.5,above] {$h$};
\draw[-] (7) -- (6)
 node[pos=0.5,above] {$h+1$};
\draw[-] (7) -- (8)
 node[pos=0.5,above] {$h+2$};
\draw[-] (9) --(8)
 node[pos=0.5,above] {$h+3$};
\draw[dashed] (9) -- (10);
\draw[-] (10) --(11)
 node[pos=0.5,above] {$r-1$};

%vertices below
\vertex[] (12) at (14,0) {};
\vertex[] (13) at (16,0) {};
\vertex[] (14) at (18,0) {};
\vertex[] (15) at (20,0) {};

%horizontal edges below 
\draw[-] (12) -- (13)
 node[pos=0.5,below] {$h+3$};
\draw[dashed] (14) -- (13);
\draw[-] (14) -- (15)
 node[pos=0.5,below] {$r-1$};

%vertical edges added
\draw[-] (8) -- (12)
 node[pos=0.5,left] {$h+1$};
\draw[-] (9) -- (13)
 node[pos=0.5,left] {$h+1$};
\draw[-] (10) -- (14)
 node[pos=0.5,left] {$h+1$};
\draw[-] (11) -- (15)
 node[pos=0.5,left] {$h+1$};
 
\end{tikzpicture}}
\end{center}
\end{proposition}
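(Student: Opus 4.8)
Here is how I would attack Proposition~\ref{counterexample2}. I would prove the two assertions -- that $\mathcal{G}'$ represents $S_{2r-1}$ and that it is not a CPR graph -- together, by induction on $r$, the key structural fact being that the subgraph $\mathcal{G}'_{r-1}$ obtained by deleting the two end‑edges labelled $r-1$ splits off a single isolated $(h+1)$‑edge $\{v_{r+h+1},w_{r+h+1}\}$ while the remaining (connected) component is exactly the graph of the analogous construction for the pair $(h,r-1)$. Thus $G'_{r-1}\cong G'_{(h,r-1)}\times C_2$ with the $C_2$ central and generated by the transposition $(v_{r+h+1}\,w_{r+h+1})$; in fact $G'_{r-1}$ is the (non‑extreme) $(h+1)$‑sesqui‑extension of $G'_{(h,r-1)}$ in the sense of Proposition~\ref{sesqui}, with $\rho_{h+1}$ the only twisted generator.

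For the first assertion, count vertices: the top row is the graph of Proposition~\ref{counterexample1} on $r+h+1$ vertices, and the bottom row adds a copy of its tail, that is $r-h-2$ further vertices, for $2r-1$ in all. Now $G'$ is transitive since $\mathcal{G}'$ is connected. By the inductive hypothesis $G'_{(h,r-1)}\cong S_{2r-3}$, so $G'_{r-1}$ acts on its $(2r-3)$‑point orbit $\Delta$ as the full symmetric group $Sym(\Delta)$, hence primitively; the same holds for the setwise stabiliser $G'_{\{\Delta\}}\geq G'_{r-1}$. Since $|\Delta|=2r-3>(2r-1)/2$, any non‑trivial block system of $G'$ would restrict on $\Delta$ to a non‑trivial block system of a primitive group (impossible) unless each block meets $\Delta$ in at most one point, in which case the $2r-3$ blocks meeting $\Delta$ would have to absorb $2r-3$ points of $\Omega\setminus\Delta$, a set of size $2$ -- impossible for $r\geq3$. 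Hence $G'$ is primitive, and as $\rho_0$ is the unique $0$‑labelled edge (the midpoint of the palindrome), it is a transposition fixing $2r-3\geq7$ points, so $G'\cong S_{2r-1}$ by Lemma~\ref{showsym}. (One also notes that $\{v_{2h+3},v_{2h+4}\}$ is an $(h+2)$‑split which is not perfect precisely because of the rungs of $\rho_{h+1}$, so Proposition~\ref{perfectsplit} does not apply directly, which is why the detour through $G'_{r-1}$ is needed; this is why $\mathcal{G}'$ sits in~\cite[Table 6]{woof}.)

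For the second assertion, suppose for contradiction that $(G',\{\rho_0,\dots,\rho_{r-1}\})$ is a string C‑group. Then $G'_{r-1}$ is a string C‑group, and one shows that this forces its "base" $G'_{(h,r-1)}$ to be a string C‑group as well, contradicting the inductive hypothesis; the reduction uses Proposition~\ref{breakinparts} together with the direct‑product/sesqui‑extension bookkeeping: deleting the central $C_2$ turns each parabolic $\langle\rho_i:i\in I\rangle$ of $G'_{r-1}$ into the corresponding parabolic of $G'_{(h,r-1)}$, and one checks the intersection property survives this projection. For the base case $r=h+4$ one argues directly that $G'_{r-1}$ is not a string C‑group by exhibiting a violation of the intersection property inside it: concretely, in this regime $\mathcal{G}'$ reduces to the graph for $(h,h+3)$ with one doubled tail vertex, and one verifies (for $r=5,h=1$ this can be done by hand or with {\sc Magma}~\cite{BCP97}) that the transposition $(v_1\,v_2)$ -- one "half" of the double transposition $\rho_h$ -- lies in $(G'_{r-1})_{0}\cap (G'_{r-1})_{r-2}$ yet not in $(G'_{r-1})_{0,r-2}$. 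Throughout, the groups attached to the relevant subgraphs $\mathcal{G}'_I$ are read off from their connected components via Lemmas~\ref{simplices}, \ref{multisimp}, \ref{result1}, \ref{wreathsimp}, \ref{lemme1} and Proposition~\ref{counterexample1}; in particular $G'_{h+1}\cong (C_2\wr S_{h+1})\times S_{r-h-1}\times S_{r-h-2}$ and $G'_{h+2}\cong S_{2h+3}\times S_{r-h-2}\times C_2$.

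The main obstacle is the base‑case verification and the precise form of the inductive step, i.e.\ pinning down a uniform intersection‑property violation. The delicacy is entirely caused by the label $h+1$: its generator $\rho_{h+1}$ carries the palindrome edge together with \emph{all} the rungs, and the action of the palindrome block $\langle\rho_0,\dots,\rho_h\rangle$ breaks its orbit into two $Sym$‑pieces (the wreath phenomenon of Lemma~\ref{wreathsimp}), which makes half‑transpositions like $(v_1\,v_2)$ expressible through the rungs in ways that place them in two different subgroups while keeping them out of the subgroup where the intersection property would demand them; tracking which parity of rung‑usage is forced -- via the sign homomorphisms on the symmetric and wreath factors -- is where the real computation lies, the remainder being routine component analysis.
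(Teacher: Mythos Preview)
Your proposal differs substantially from the paper's argument, and it has two genuine gaps.

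First, the inductive step. You assert that if $G'_{r-1}$ is a string C-group then its sesqui-base $G'_{(h,r-1)}$ is as well, because ``deleting the central $C_2$'' preserves the intersection property. But $G'_{r-1}$ is an $(h+1)$-sesqui-extension, with $h+1$ strictly interior, and neither Proposition~\ref{sesqui} nor Proposition~\ref{extremesesqui} gives you this direction for a non-extremal index. Concretely, with $\tau$ the transposition on the isolated edge, the projection $\pi\colon G'_{r-1}\to G'_{(h,r-1)}$ satisfies $\pi(G^*_I)=G_I$, but for $g\in G_I\cap G_J$ one only knows that \emph{either} $g$ \emph{or} $g\tau$ lies in $G^*_I$ (and likewise for $J$); when the two lifts disagree you cannot invoke the intersection property of $G^*$. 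You would need to rule this parity mismatch out, and you have not. Quotients of string C-groups by central involutions are not string C-groups in general, so this step genuinely needs an argument specific to the situation.

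Second, and more seriously, your base case is the infinite family $r=h+4$, $h\ge1$. You propose to exhibit, for each such $h$, an explicit element in $(G'_{r-1})_0\cap(G'_{r-1})_{r-2}\setminus(G'_{r-1})_{0,r-2}$, but you only claim to check $h=1$ by hand or {\sc Magma}. A uniform argument for all $h$ is required, and once you have one, the induction is redundant: that same uniform witness already disproves the intersection property for every $(h,r)$ directly.

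This is exactly what the paper does. It produces, for all $r\ge5$ and $1\le h\le r-4$ simultaneously, a single element
\[
\sigma:=(a,b)\,\rho_{h+1}
\]
(where $\{a,b\}$ is the unique $(h+1)$-edge lying on the palindrome) and shows in three short strokes that $\sigma\in\Gamma'_{h+2\to r-1}$ via Proposition~\ref{counterexample1}, that $\sigma\in\Gamma'_{0,h+3\to r-1}$ via Lemma~\ref{lemme1}, and that $\sigma\notin\Gamma'_{0,h+2\to r-1}$ by a parity argument (all generators of an auxiliary isomorphic group are even permutations while the image of $\sigma$ is a transposition). No induction, no sesqui bookkeeping. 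Your closing remarks about sign homomorphisms and ``half-transpositions'' are pointing at precisely this parity obstruction, so you have the right instinct; the fix is to drop the induction and exhibit $\sigma$ directly.
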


\begin{proof}
Let $\Gamma'=\langle\rho_0,...,\rho_h,...,\rho_{r-1}\rangle$ be the group represented by $\mathcal G'$.

We show that $\Gamma'_{0,h+2\to r-1}\neq\Gamma'_{0,h+3\to r-1}\cap\Gamma'_{h+2\to r-1}$.

Let $\sigma:=(a,b)\rho_{h+1}=\prod_{i=1}^{r-h-2} (i,(r-h-2)+i)$ with the vertices of $\mathcal{G}'$ numbered or labelled as below.

\begin{center}
\resizebox{0.8\columnwidth}{!}{
\begin{tikzpicture}
\vertex[] (1) at (10,0) {};
\vertex[] (2) at (8,0) {};
\vertex[] (3) at (6,0) {};
\vertex[] (4) at (6,2) {};
\vertex[] (5) at (8,2) {};
\vertex[pin=above:$a$] (6) at (10,2) {};
\vertex[pin=above:$b$] (7) at (12,2) {};
\vertex[pin=above:$r-h-2$] (8) at (14,2) {};
\vertex[pin=above:$r-h-3$] (9) at (16,2) {};
\vertex[pin=above:$2$] (10) at (18,2) {};
\vertex[pin=40:$1$] (11) at (20,2) {};

\draw[-] (1) -- (2)
 node[pos=0.5,below] {$h$};
\draw[dashed] (3) -- (2);
\draw[-] (3) -- (4)
 node[pos=0.5,left] {$0$};
\draw[dashed] (5) -- (4);
\draw[-] (5) -- (6)
 node[pos=0.5,above] {$h$};
\draw[-] (7) -- (6)
 node[pos=0.5,above] {$h+1$};
\draw[-] (7) -- (8)
 node[pos=0.5,above] {$h+2$};
\draw[-] (9) --(8)
 node[pos=0.5,above] {$h+3$};
\draw[dashed] (9) -- (10);
\draw[-] (10) --(11)
 node[pos=0.5,above] {$r-1$};

%vertices below
\vertex[pin=-135:$2r-2h-4$] (12) at (14,0) {};
\vertex[pin=below:$2r-2h-5$] (13) at (16,0) {};
\vertex[pin=below:$r-h$] (14) at (18,0) {};
\vertex[pin=-45:$r-h-1$] (15) at (20,0) {};

%horizontal edges below 
\draw[-] (12) -- (13)
 node[pos=0.5,below] {$h+3$};
\draw[dashed] (14) -- (13);
\draw[-] (14) -- (15)
 node[pos=0.5,below] {$r-1$};

%vertical edges added
\draw[-] (8) -- (12)
 node[pos=0.5,left] {$h+1$};
\draw[-] (9) -- (13)
 node[pos=0.5,left] {$h+1$};
\draw[-] (10) -- (14)
 node[pos=0.5,left] {$h+1$};
\draw[-] (11) -- (15)
 node[pos=0.5,left] {$h+1$};
 
\end{tikzpicture}}
\end{center}

We show that  $\sigma\notin\Gamma'_{0,h+2\to r-1}$ while $\sigma\in\Gamma'_{0,h+3\to r-1}$ and $\sigma\in\Gamma'_{h+2\to r-1}$.
Let $G=\langle \psi_1,...,\psi_h,\psi_{h+1}\rangle$  be the permutation group represented by the following permutation representation graph.

\begin{center}
\resizebox{0.6\columnwidth}{!}{
\begin{tikzpicture}
\vertex[] (1) at (10,0) {};
\vertex[] (2) at (8,0) {};
\vertex[] (3) at (6,0) {};
\vertex[] (4) at (6,2) {};
\vertex[] (5) at (8,2) {};
\vertex[] (6) at (10,2) {};
\vertex[] (7) at (12,2) {};
\vertex[pin=above:$c$] (8) at (14,2) {};

\vertex[] (9) at (4,0) {};
\vertex[] (10) at (4,2) {};
\draw[-] (3) -- (9)
 node[pos=0.5,below] {$1$};
\draw[-] (4) -- (10)
 node[pos=0.5,above] {$1$};

\draw[-] (1) -- (2)
 node[pos=0.5,below] {$h$};
\draw[dashed] (3) -- (2);

\draw[dashed] (5) -- (4);
\draw[-] (5) -- (6)
 node[pos=0.5,above] {$h$};
\draw[-] (7) -- (6)
 node[pos=0.5,above] {$h+1$};

%vertices below
\vertex[pin=below:$d$] (12) at (14,0) {};

%vertical edges added
\draw[-] (8) -- (12)
 node[pos=0.5,right] {$h+1$};
\end{tikzpicture}}
\end{center}

The homomorphism $\phi:\langle\rho_1,...,\rho_h,(a,b),\sigma\rangle\to\langle\psi_1,...,\psi_h,(c,d)\psi_{h+1},(c,d)\rangle$ that maps $\rho_i$ to $\psi_i$ for all $i\in\{1,...,h\}$, $(a,b)$ to $(c,d)\psi_{h+1}$ and $\sigma$ to $(c,d)$ yields an isomorphism between $\Gamma_{0,h+2\to r-1}$ and $G$.
Since all generators of $G$ are even, $(c,d)\notin G$ and thus $\sigma\notin \Gamma'_{0,h+2\to r-1}$.
By Proposition \ref{counterexample1}, $\Gamma'_{h+2\to r-1}$ acts on its largest orbit as $S_{2h+3}$. Hence there exists $\tau_1\in\Gamma'_{h+2\to r-1}$ such that $(a,b)=\rho_0^{\tau_1}$. Therefore, $\sigma=\rho_{h+1}\rho_0^{\tau_1}\in\Gamma'_{h+2\to r-1}$.

A similar argument, appealing to Lemma \ref{lemme1} this time, gives that there is $\tau_2\in\Gamma'_{0, h+3\to r-1}$ such that $(a,b)=\rho_{h+2}^{\tau_2}$ and thus $\sigma=\rho_{h+1}\rho_{h+2}^{\tau_2}\in\Gamma'_{0,h+3\to r-1}$.
\end{proof}

Together, Propositions \ref{counterexample1} and \ref{counterexample2} establish that Conjecture \ref{false} is false but it is nonetheless true if one adds the hypothesis that $\mathcal{G}$ is a simplex. Let us prove this, starting once again with a lemma.

\begin{lemma}\label{speccase}
For any integer $r\geq 3$,
    \begin{center}
\resizebox{0.8\columnwidth}{!}{
    \begin{tikzpicture}
\vertex[] (1) at (0,2) {};
\vertex[] (2) at (2,2) {};
\vertex[] (3) at (4,2) {};
\vertex[] (4) at (6,2) {};
\vertex[] (5) at (8,2) {};
\vertex[] (6) at (10,2) {};
\vertex[] (7) at (12,2) {};

\vertex[] (8) at (0,0) {};
\vertex[] (9) at (2,0) {};
\vertex[] (10) at (4,0) {};
\vertex[] (11) at (6,0) {};
\vertex[] (12) at (8,0) {};

\draw[-] (1) -- (2)
 node[pos=0.5,above] {$0$};
\draw[-] (3) -- (2)
 node[pos=0.5,above] {$1$};
\draw[dashed] (3) -- (4);
\draw[-] (5) -- (4)
 node[pos=0.5,above] {$r-3$};
\draw[-] (6) -- (5)
 node[pos=0.5,above] {$r-2$};
\draw[-] (7) -- (6)
 node[pos=0.5,above] {$r-1$};
 
\draw[-] (9) -- (8)
 node[pos=0.5,below] {$0$}; 
\draw[-] (9) -- (10)
 node[pos=0.5,below] {$1$}; 
\draw[dashed] (10) -- (11);
\draw[-] (11) -- (12)
 node[pos=0.5,below] {$r-3$};

\draw[-] (1) -- (8)
 node[pos=0.5,left] {$r-1$};
\draw[-] (2) -- (9)
 node[pos=0.5,left] {$r-1$};
\draw[-] (3) -- (10)
 node[pos=0.5,left] {$r-1$};
\draw[-] (4) -- (11)
 node[pos=0.5,left] {$r-1$};
\draw[-] (5) -- (12)
 node[pos=0.5,left] {$r-1$};
\end{tikzpicture}}
    \end{center}
    is a CPR graph for $S_r\wr C_2$.
\end{lemma}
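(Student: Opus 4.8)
The plan is to proceed by induction on $r\geq 3$, applying Proposition~\ref{breakinparts} at each step. First I would fix notation: label the top row $t_1,\dots,t_{r+1}$ (so that $\rho_i$ contains the transposition $(t_{i+1},t_{i+2})$ for $0\leq i\leq r-1$) and the bottom row $b_1,\dots,b_{r-1}$ (so that $\rho_i$ contains $(b_{i+1},b_{i+2})$ for $0\leq i\leq r-3$), the vertical $(r-1)$-edges being $\{t_j,b_j\}$ for $1\leq j\leq r-1$. Thus $\rho_i=(t_{i+1},t_{i+2})(b_{i+1},b_{i+2})$ for $i\leq r-3$, $\rho_{r-2}=(t_{r-1},t_r)$ and $\rho_{r-1}=(t_r,t_{r+1})\prod_{j=1}^{r-1}(t_j,b_j)$. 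The string property is clear from the shape of the graph (the relevant $\{i,j\}$-subgraphs with $|i-j|\geq 2$ being unions of single edges and alternating squares, cf. Lemma~\ref{cprlem}), so only the intersection property has to be checked; along the way I also identify $\Gamma:=\langle\rho_0,\dots,\rho_{r-1}\rangle$.

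I would first show $\Gamma\cong S_r\wr C_2$. Since the graph is connected, $\Gamma$ is transitive, and $\{\Delta_1,\Delta_2\}$ with $\Delta_1:=\{t_1,\dots,t_r\}$ and $\Delta_2:=\{b_1,\dots,b_{r-1},t_{r+1}\}$ is $\Gamma$-invariant ($\rho_0,\dots,\rho_{r-2}$ preserve each part, $\rho_{r-1}$ swaps them), hence a block system. The setwise stabiliser of $\Delta_1$ contains $\rho_0,\dots,\rho_{r-2}$, which induce on $\Delta_1$ the group of an $(r-1)$-simplex, namely $Sym(\Delta_1)\cong S_r$ (Lemma~\ref{simplices}); so by Theorem~\ref{wreath}, $\Gamma\leq S_r\wr C_2$. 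For the reverse inclusion, note that $\Gamma_{r-1}=\langle\rho_0,\dots,\rho_{r-2}\rangle$ is, up to the isolated vertex $t_{r+1}$, the CPR graph of Lemma~\ref{result1} (with $h=r-2$), so $\Gamma_{r-1}\cong S_r\times S_{r-1}$ and in particular contains $Sym(\Delta_1)\times\{1\}$; conjugating this subgroup by $\rho_{r-1}$ yields $\{1\}\times Sym(\Delta_2)\leq\Gamma$, so $\Gamma$ contains the base group $S_r\times S_r$ together with the block swap $\rho_{r-1}$, whence $\Gamma\cong S_r\wr C_2$.

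For the intersection property I would verify the three conditions of Proposition~\ref{breakinparts}. The subgroup $\Gamma_{r-1}$ is a string C-group by Lemma~\ref{result1} (the isolated vertex being irrelevant to the intersection property). For $\Gamma_0=\langle\rho_1,\dots,\rho_{r-1}\rangle$, its permutation representation graph is the disjoint union of the single $(r-1)$-edge $\{t_1,b_1\}$ and a component on $\{t_2,\dots,t_{r+1},b_2,\dots,b_{r-1}\}$ which, after the relabelling $i\mapsto i-1$, is exactly $\mathcal{G}^{(r-1)}$; moreover the homomorphism restricting $\Gamma_0$ to that large component is injective, because a word in $\rho_1,\dots,\rho_{r-1}$ acting trivially on it must involve $\rho_{r-1}$ an even number of times (its image under the natural map $S_{r-1}\wr C_2\to C_2$ is trivial) and therefore acts trivially on $\{t_1,b_1\}$ as well. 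Hence $\Gamma_0$ is isomorphic, as a string group generated by involutions, to the group of $\mathcal{G}^{(r-1)}$, which is a string C-group by the induction hypothesis; when $r=3$ both $\Gamma_0$ and $\Gamma_{r-1}$ have rank two and are trivially string C-groups, which starts the induction. Finally, to obtain $\Gamma_0\cap\Gamma_{r-1}=\Gamma_{0,r-1}$, observe that the orbits of $\Gamma_{0,r-1}=\langle\rho_1,\dots,\rho_{r-2}\rangle$ are $\{t_2,\dots,t_r\}$, $\{b_2,\dots,b_{r-1}\}$ and the singletons $\{t_1\},\{t_{r+1}\},\{b_1\}$, and that $\Gamma_{0,r-1}$ acts on the union of the first two as the full direct product $S_{r-1}\times S_{r-2}$ (Lemma~\ref{result1}, the case $r=3$ being immediate). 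Since $\Gamma_0\cap\Gamma_{r-1}\leq\Gamma_{r-1}$ fixes $t_{r+1}$ and preserves $\Delta_1$ and $\{b_1,\dots,b_{r-1}\}$, while an element of $\Gamma_0$ can only interchange $t_1$ and $b_1$, every element of $\Gamma_0\cap\Gamma_{r-1}$ fixes $t_1,t_{r+1},b_1$ and preserves each large orbit; thus $\Gamma_0\cap\Gamma_{r-1}$ lies in $Sym(\{t_2,\dots,t_r\})\times Sym(\{b_2,\dots,b_{r-1}\})$ and acts on every orbit of $\Gamma_{0,r-1}$ exactly as $\Gamma_{0,r-1}$ does, so faithfulness on the whole vertex set forces $\Gamma_0\cap\Gamma_{r-1}=\Gamma_{0,r-1}$. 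Proposition~\ref{breakinparts} then closes the inductive step.

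The step I expect to be the main obstacle is the treatment of $\Gamma_0$: recognising its large connected component as a relabelled copy of $\mathcal{G}^{(r-1)}$ so that the induction hypothesis applies, and then checking that the extra component $\{t_1,b_1\}$ genuinely contributes nothing — that is, the injectivity of the restriction homomorphism. The remainder is essentially careful bookkeeping — matching simplex ranks, vertex counts and label ranges in the invocations of Lemma~\ref{result1} and of the induction hypothesis — together with the routine orbit-comparison argument for the intersection.
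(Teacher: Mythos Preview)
Your proof is correct and follows essentially the same inductive scheme as the paper's: induction on $r$ via Proposition~\ref{breakinparts}, with $\Gamma_{r-1}$ handled by Lemma~\ref{result1} and the other maximal parabolic reduced to the rank-$(r-1)$ instance of the same graph. The only differences are presentational: the paper recognises your $\Gamma_0$ as an $(r-1)$-sesqui-extension of the rank-$(r-1)$ group and simply quotes Proposition~\ref{extremesesqui} (bypassing your block-parity faithfulness argument), and it dispatches the base case $r=3$ by a {\sc Magma} check rather than folding it into the general orbit comparison as you do.
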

\begin{proof}
Let us prove this result by induction on $r\geq 3$.
    
    For $r=3$, we have the following permutation representation graph.
    \begin{center}
    \begin{tikzpicture}
\vertex[] (1) at (0,1) {};
\vertex[] (2) at (1,1) {};
\vertex[] (3) at (2,1) {};
\vertex[] (4) at (3,1) {};

\vertex[] (5) at (0,0) {};
\vertex[] (6) at (1,0) {};

\draw[-] (1) -- (2)
 node[pos=0.5,above] {$0$};
\draw[-] (3) -- (2)
 node[pos=0.5,above] {$1$};
\draw[-] (3) -- (4)
 node[pos=0.5,above] {$2$};

\draw[-] (5) -- (6)
 node[pos=0.5,below] {$0$}; 

\draw[-] (1) -- (5)
 node[pos=0.5,left] {$2$};
\draw[-] (2) -- (6)
 node[pos=0.5,left] {$2$};

    \end{tikzpicture}
    \end{center}

Using {\sc Magma}, one can quickly see that the permutation group 
$$\langle (1,2)(5,6),(2,3),(3,4)(1,5)(2,6)\rangle$$ 
is a string C-group isomorphic to $S_3\wr C_2$.

Now suppose that the following graph $\mathcal G$ is a CPR graph.
\begin{center}
\resizebox{0.8\columnwidth}{!}{
    \begin{tikzpicture}
\vertex[] (1) at (0,2) {};
\vertex[] (2) at (2,2) {};
\vertex[] (3) at (4,2) {};
\vertex[] (4) at (6,2) {};
\vertex[] (5) at (8,2) {};
\vertex[] (6) at (10,2) {};
\vertex[] (7) at (12,2) {};

\vertex[] (8) at (0,0) {};
\vertex[] (9) at (2,0) {};
\vertex[] (10) at (4,0) {};
\vertex[] (11) at (6,0) {};
\vertex[] (12) at (8,0) {};

\draw[-] (1) -- (2)
 node[pos=0.5,above] {$0$};
\draw[-] (3) -- (2)
 node[pos=0.5,above] {$1$};
\draw[dashed] (3) -- (4);
\draw[-] (5) -- (4)
 node[pos=0.5,above] {$r-3$};
\draw[-] (6) -- (5)
 node[pos=0.5,above] {$r-2$};
\draw[-] (7) -- (6)
 node[pos=0.5,above] {$r-1$};
 
\draw[-] (9) -- (8)
 node[pos=0.5,below] {$0$}; 
\draw[-] (9) -- (10)
 node[pos=0.5,below] {$1$}; 
\draw[dashed] (10) -- (11);
\draw[-] (11) -- (12)
 node[pos=0.5,below] {$r-3$};

\draw[-] (1) -- (8)
 node[pos=0.5,left] {$r-1$};
\draw[-] (2) -- (9)
 node[pos=0.5,left] {$r-1$};
\draw[-] (3) -- (10)
 node[pos=0.5,left] {$r-1$};
\draw[-] (4) -- (11)
 node[pos=0.5,left] {$r-1$};
\draw[-] (5) -- (12)
 node[pos=0.5,left] {$r-1$};
\end{tikzpicture}}
\end{center}

Let us show that the following graph is a CPR graph as well.

\begin{center}
\resizebox{0.8\columnwidth}{!}{
    \begin{tikzpicture}
\vertex[] (1) at (0,2) {};
\vertex[] (2) at (2,2) {};
\vertex[] (3) at (4,2) {};
\vertex[] (4) at (6,2) {};
\vertex[] (5) at (8,2) {};
\vertex[] (6) at (10,2) {};
\vertex[] (7) at (12,2) {};

\vertex[] (8) at (0,0) {};
\vertex[] (9) at (2,0) {};
\vertex[] (10) at (4,0) {};
\vertex[] (11) at (6,0) {};
\vertex[] (12) at (8,0) {};

\vertex[] (13) at (-2,2) {};
\vertex[] (14) at (-2,0) {};

\draw[-] (1) -- (2)
 node[pos=0.5,above] {$0$};
\draw[-] (3) -- (2)
 node[pos=0.5,above] {$1$};
\draw[dashed] (3) -- (4);
\draw[-] (5) -- (4)
 node[pos=0.5,above] {$r-3$};
\draw[-] (6) -- (5)
 node[pos=0.5,above] {$r-2$};
\draw[-] (7) -- (6)
 node[pos=0.5,above] {$r-1$};
 
\draw[-] (9) -- (8)
 node[pos=0.5,below] {$0$}; 
\draw[-] (9) -- (10)
 node[pos=0.5,below] {$1$}; 
\draw[dashed] (10) -- (11);
\draw[-] (11) -- (12)
 node[pos=0.5,below] {$r-3$};

\draw[-] (1) -- (8)
 node[pos=0.5,left] {$r-1$};
\draw[-] (2) -- (9)
 node[pos=0.5,left] {$r-1$};
\draw[-] (3) -- (10)
 node[pos=0.5,left] {$r-1$};
\draw[-] (4) -- (11)
 node[pos=0.5,left] {$r-1$};
\draw[-] (5) -- (12)
 node[pos=0.5,left] {$r-1$};

\draw[-] (1) -- (13)
 node[pos=0.5,above] {$-1$};
\draw[-] (8) -- (14)
 node[pos=0.5,below] {$-1$};
\draw[-] (14) -- (13)
 node[pos=0.5,left] {$r-1$};

\end{tikzpicture}}
    \end{center}
    
Let $\Gamma$ be the permutation group represented by this permutation representation graph.
First note that $\Gamma$ clearly satisfies the string property by Proposition \ref{cprlem}.
Now $\Gamma_{-1}$ is an $(r-1)$-sesqui-extension of the group represented by the graph $\mathcal G$ above and hence is a string C-group by Proposition \ref{extremesesqui}.
Moreover, $\Gamma_{r-1}$ is a string C-group by Lemma \ref{result1}.
Finally $\Gamma_{-1,r-1}$ is isomorphic to $S_{r+1}\times S_{r-1}$ by Lemma~\ref{result1}.
In $\Gamma_{r-1}$, all elements fix the set of the top vertices and the set of the bottom vertices. The pair of leftmost vertices is fixed by any element of $\Gamma_{-1}$.
Any element of $\Gamma_{-1}\cap\Gamma_{r-1}$ thus fixes both leftmost vertices as well as the set of the remaining top vertices and the set of the remaining bottom vertices. So $\Gamma_{-1}\cap\Gamma_{r-1}\leq \Gamma_{-1,r-1}\cong S_{r+1}\times S_{r-1}$ and hence $\Gamma_{-1}\cap\Gamma_{r-1}=\Gamma_{-1,r-1}$.
Since $\Gamma_{-1}$ and $\Gamma_{r-1}$ are string C-groups and since $\Gamma_{-1}\cap\Gamma_{r-1}=\Gamma_{-1,r-1}$, we have $\Gamma$ is a string C-group by Proposition \ref{breakinparts}.

It remains to show that the permutation group $G$ depicted by the permutation representation graph

    \begin{center}
\resizebox{0.8\columnwidth}{!}{
    \begin{tikzpicture}
\vertex[] (1) at (0,2) {};
\vertex[] (2) at (2,2) {};
\vertex[] (3) at (4,2) {};
\vertex[] (4) at (6,2) {};
\vertex[] (5) at (8,2) {};
\vertex[] (6) at (10,2) {};
\vertex[] (7) at (12,2) {};

\vertex[] (8) at (0,0) {};
\vertex[] (9) at (2,0) {};
\vertex[] (10) at (4,0) {};
\vertex[] (11) at (6,0) {};
\vertex[] (12) at (8,0) {};

\draw[-] (1) -- (2)
 node[pos=0.5,above] {$0$};
\draw[-] (3) -- (2)
 node[pos=0.5,above] {$1$};
\draw[dashed] (3) -- (4);
\draw[-] (5) -- (4)
 node[pos=0.5,above] {$r-3$};
\draw[-] (6) -- (5)
 node[pos=0.5,above] {$r-2$};
\draw[-] (7) -- (6)
 node[pos=0.5,above] {$r-1$};
 
\draw[-] (9) -- (8)
 node[pos=0.5,below] {$0$}; 
\draw[-] (9) -- (10)
 node[pos=0.5,below] {$1$}; 
\draw[dashed] (10) -- (11);
\draw[-] (11) -- (12)
 node[pos=0.5,below] {$r-3$};

\draw[-] (1) -- (8)
 node[pos=0.5,left] {$r-1$};
\draw[-] (2) -- (9)
 node[pos=0.5,left] {$r-1$};
\draw[-] (3) -- (10)
 node[pos=0.5,left] {$r-1$};
\draw[-] (4) -- (11)
 node[pos=0.5,left] {$r-1$};
\draw[-] (5) -- (12)
 node[pos=0.5,left] {$r-1$};
\end{tikzpicture}}
    \end{center}

is isomorphic to $S_r \wr C_2$.
Note that $G$ has a non-trivial complete system of imprimitivity consisting of two blocks as represented below.

    \begin{center}
\resizebox{0.8\columnwidth}{!}{    
    \begin{tikzpicture}
\vertex[color=red,label=above:$1$] (1) at (0,2) {};
\vertex[color=red,label=above:$2$] (2) at (2,2) {};
\vertex[color=red,label=above:$3$] (3) at (4,2) {};
\vertex[color=red,label=above:$r-2$] (4) at (6,2) {};
\vertex[color=red,label=above:$r-1$] (5) at (8,2) {};
\vertex[color=red,label=above:$r$] (6) at (10,2) {};
\vertex[color=blue,label=above:$r+1$] (7) at (12,2) {};

\vertex[color=blue] (8) at (0,0) {};
\vertex[color=blue] (9) at (2,0) {};
\vertex[color=blue] (10) at (4,0) {};
\vertex[color=blue] (11) at (6,0) {};
\vertex[color=blue,label=below:$r+2$] (12) at (8,0) {};

\draw[-] (1) -- (2)
 node[pos=0.5,above] {$0$};
\draw[-] (3) -- (2)
 node[pos=0.5,above] {$1$};
\draw[dashed] (3) -- (4);
\draw[-] (5) -- (4)
 node[pos=0.5,above] {$r-3$};
\draw[-] (6) -- (5)
 node[pos=0.5,above] {$r-2$};
\draw[-] (7) -- (6)
 node[pos=0.5,above] {$r-1$};
 
\draw[-] (9) -- (8)
 node[pos=0.5,below] {$0$}; 
\draw[-] (9) -- (10)
 node[pos=0.5,below] {$1$}; 
\draw[dashed] (10) -- (11);
\draw[-] (11) -- (12)
 node[pos=0.5,below] {$r-3$};

\draw[-] (1) -- (8)
 node[pos=0.5,left] {$r-1$};
\draw[-] (2) -- (9)
 node[pos=0.5,left] {$r-1$};
\draw[-] (3) -- (10)
 node[pos=0.5,left] {$r-1$};
\draw[-] (4) -- (11)
 node[pos=0.5,left] {$r-1$};
\draw[-] (5) -- (12)
 node[pos=0.5,left] {$r-1$};
\end{tikzpicture}}
    \end{center}

In fact, the first $r-1$ generators fix each block while $\rho_{r-1}$ permutes them. By Theorem \ref{wreath}, $G\leq S_r\wr C_2$ (since $S_r$ is the group induced by $G$ on the red block by Lemma \ref{simplices} and $\langle\rho_{r-1}\rangle=C_2$). As $G$ induces $S_r$ on the red block, for any $i\in\{1,...,r-2\}$, there is $\sigma\in G$ such that $(i,i+1)=\rho_{r-2}^{\sigma}$. Moreover, $(r+1,r+2)=\rho_{r-2}^{\rho_{r-1}}$. Thus $G$ is the full wreath product $S_r\wr C_2$, as claimed.
\end{proof}

\begin{proposition}\label{workswithsimplices}

For any integers $i\geq 2$ and $r\geq i+1$, the following permutation representation graph is a CPR graph. The group it represents is  $S_{r+i+1}$ when $r> i+1$ and $S_r\wr C_2$ when $r=i+1$.

    \begin{center}
\resizebox{\columnwidth}{!}{
\begin{tikzpicture}
\vertex[] (1) at (0,2) {};
\vertex[] (2) at (2,2) {};
\vertex[] (3) at (4,2) {};
\vertex[] (4) at (6,2) {};
\vertex[] (5) at (8,2) {};
\vertex[] (6) at (10,2) {};
\vertex[] (7) at (12,2) {};
\vertex[] (8) at (14,2) {};

%vertices below
\vertex[] (10) at (0,0) {};
\vertex[] (11) at (2,0) {};
\vertex[] (12) at (4,0) {};
\vertex[] (13) at (6,0) {};
\vertex[] (14) at (8,0) {};
\vertex[] (15) at (10,0) {};

%additional vertices

\vertex[] (16) at (16,2) {};
\vertex[] (17) at (18,2) {};

\draw[-] (1) -- (2)
 node[pos=0.5,above] {$0$};
\draw[-] (3) -- (2)
 node[pos=0.5,above] {$1$};
\draw[dashed] (3) -- (4);
\draw[-] (5) -- (4)
 node[pos=0.5,above] {$i-3$};
\draw[-] (5) -- (6)
 node[pos=0.5,above] {$i-2$};
\draw[-] (7) -- (6)
 node[pos=0.5,above] {$i-1$};
\draw[-] (7) --(8)
 node[pos=0.5,above] {$i$};

%horizontal edges below 
\draw[-] (10) -- (11)
 node[pos=0.5,below] {$0$};
\draw[-] (12) -- (11)
 node[pos=0.5,below] {$1$};
\draw[dashed] (12) -- (13);
\draw[-] (14) -- (13)
 node[pos=0.5,below] {$i-3$};
\draw[-] (14) -- (15)
 node[pos=0.5,below] {$i-2$};

%vertical edges added
\draw[-] (1) -- (10)
 node[pos=0.5,left] {$i$};
\draw[-] (2) -- (11)
 node[pos=0.5,left] {$i$};
\draw[-] (3) -- (12)
 node[pos=0.5,left] {$i$};
\draw[-] (4) -- (13)
 node[pos=0.5,left] {$i$};
\draw[-] (5) -- (14)
 node[pos=0.5,left] {$i$};
\draw[-] (6) -- (15)
 node[pos=0.5,left] {$i$};

%additional edges
\draw[dashed] (8) -- (16);
\draw[-] (16) -- (17)
 node[pos=0.5,above] {$r-1$};

\end{tikzpicture}}
\end{center}
\end{proposition}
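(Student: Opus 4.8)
The plan is to fix $i\geq 2$ and induct on the number $n=r+i+1$ of vertices of the displayed graph, which I call $\mathcal G$; write $\Gamma=\langle\rho_0,\dots,\rho_{r-1}\rangle$ for the group it represents and label the top vertices $v_1,\dots,v_{r+1}$ and the bottom vertices $w_1,\dots,w_i$ from left to right. The instances with $r=i+1$ are exactly Lemma~\ref{speccase} with its parameter equal to $i+1$ (which is $\geq 3$, whence the hypothesis $i\geq 2$): these are CPR graphs for $S_{i+1}\wr C_2=S_r\wr C_2$ and form the base of the induction. When $r>i+1$, the edge labelled $r-1$ is the unique edge of its label — $r-1\geq i+1$ is neither the vertical label $i$ nor a bottom-simplex label, all of which are $\leq i-2$ — and it is a perfect split of a connected graph, so Lemma~\ref{howact1} identifies $\Gamma$ with $S_n$ outright; only the intersection property remains.

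For the inductive step there are two ways $r$ can exceed $i+1$. When $r\geq i+3$, $\mathcal G$ arises from the $(i,r-1)$-graph by attaching one pendant edge, of the new top label $r-1$, at $v_r$; dualising the $(i,r-1)$-graph turns this into a pendant $0$-edge, and since the top label $r-2$ occurs only once there (again $r-2\geq i+1$ exceeds all other labels), in the dual every other edge has label $\geq 1$. Lemma~\ref{basecase1} then applies to this dual, a CPR graph by induction, and dualising back gives that $\mathcal G$ is a CPR graph (representing $S_n$, as noted or via the ``moreover'' clause of Lemma~\ref{basecase1}).

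The one remaining step, from $(i,i+1)$ to $(i,i+2)$, is where Lemma~\ref{basecase1} fails: in the $(i,i+1)$-graph the maximal label $i$ also labels the $i$ vertical edges. Here I would apply Proposition~\ref{breakinparts} to $\Gamma=\Gamma_{i,i+2}$ directly. Deleting $\rho_{i+1}=(v_{i+2}v_{i+3})$ leaves the $(i,i+1)$-graph plus an isolated vertex, so $\Gamma_{i+1}\cong S_{i+1}\wr C_2$ is a string C-group by the base case. Deleting $\rho_0$ splits the vertices into the pendant pair $\{v_1,w_1\}$ and the remaining $2i+1$ vertices; on the latter $\Gamma_0$ acts, after decreasing all labels by $1$, as the $(i-1,i+1)$-graph, so $\Gamma_0$ is a sesqui-extension of that string C-group (known by induction, as it has fewer vertices) at a \emph{non-extreme} generator. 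That $\Gamma_0$ is itself a string C-group I would establish by a secondary induction: applying Proposition~\ref{breakinparts} to $\Gamma_0$, deleting its last generator gives an extreme (top-index) sesqui-extension, to which Proposition~\ref{extremesesqui} applies, whereas deleting its first generator gives another non-extreme sesqui-extension on strictly fewer vertices, so the recursion terminates at low-rank cases checked by hand. Finally $\Gamma_0\cap\Gamma_{i+1}=\Gamma_{0,i+1}$, and every intersection equality needed in the secondary induction, is obtained by the orbit-comparison method of the proofs of Lemmas~\ref{howact1},~\ref{basecase1} and~\ref{speccase}: each group involved acts as a full symmetric group on its orbit through the ``main'' vertex and independently on the remaining orbits, and cannot act on the pendant pair as an overgroup of the corresponding smaller group.

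The hard part is exactly this last step $(i,i+1)\to(i,i+2)$: $\Gamma_0$ is a sesqui-extension at a non-extreme generator, precisely the situation in which the intersection property can genuinely fail, so it cannot be treated as a black box — one must open its internal simplex structure and carry out the secondary recursion, which moreover lowers the value of $i$ (this is why the induction is on the vertex count, not on $r$ alone), and then dispose of the resulting small-parameter corner cases.
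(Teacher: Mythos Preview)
The paper's proof is two sentences: it quotes Lemma~\ref{speccase} for $r=i+1$ and, for every $r>i+1$, asserts that the graph is obtained from the Lemma~\ref{speccase} graph ``by successive edge-additions, as described in Lemma~\ref{basecase1}''. You are right to flag the very first of these additions. In the $(i,i+1)$ graph the maximal label $i$ is carried not only by the pendant edge at $v_{i+2}$ but also by all $i$ vertical edges, so after dualising the subgraph $\mathcal G'$ still contains $0$-edges and the hypothesis of Lemma~\ref{basecase1} is not met; the paper's own counterexamples immediately after that lemma show it can genuinely fail in that situation. The paper simply glosses over this point. From $(i,i+2)$ onward your argument and the paper's coincide, since then the top label $r-1\geq i+1$ occurs only once and Lemma~\ref{basecase1} applies cleanly.

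Your patch for the step $(i,i+1)\to(i,i+2)$ --- Proposition~\ref{breakinparts} with $\Gamma_{i+1}$ handled by Lemma~\ref{speccase} --- is the natural one, but as written it is only a sketch, not a proof. The ``secondary induction'' for $\Gamma_0$ is not well-founded as stated: deleting the first generator of $\Gamma_0$ does \emph{not} lower the vertex count, it only shrinks the main component while creating a second pendant $\rho_i$-pair, and each further such deletion adds another; so your recursion parameter has to be the rank (or the size of the main component), and you must say precisely which family of iterated sesqui-extensions you are inducting over. Likewise ``obtained by the orbit-comparison method'' is doing all the work in the intersection step: to get $\Gamma_0\cap\Gamma_{i+1}=\Gamma_{0,i+1}$ you must actually argue that $\Gamma_{0,i+1}$ (an extreme sesqui-extension of the rank-$i$ graph of Lemma~\ref{speccase}) already contains every element of $\Gamma_{i+1}\cong S_{i+1}\wr C_2$ that respects the orbit decomposition of $\Gamma_0$ --- true, but it is the content of the step and deserves a line, not a pointer. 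Finally, for $i=2$ the ``$(i-1,i+1)$ graph'' is undefined (you need $i'\geq2$), so that corner case really must be done by hand; one rank-$4$ check suffices, but it should appear.
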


\begin{proof}
    When $r=i+1$, this is true by Lemma \ref{speccase}. When $r>i+1$, the graph is obtained from its subgraph spanned by its edges of labels at most $i$ (a CPR graph by Lemma \ref{speccase}) by successive edge-additions, as described in Lemma \ref{basecase1}. Hence our graph is a CPR graph and the permutation group it represents is the symmetric group on its vertices.
\end{proof}

\section{Conclusion}

As already mentioned above, the graph met in Proposition \ref{counterexample2} was presented in~\cite[Table 6]{woof} as graph $(X)$, one of the possible permutation representation graphs of rank at least $n/2$, having a fracture graph and a split that is not perfect for $S_n$. 
In~\cite[Section 5.2]{woof}, the authors conjecture that all graphs appearing in~\cite[Table 6]{woof} are permutation representation graphs of a string C-group.
We just proved here that graph (X) of that table does not, disproving their conjecture.
In another paper, we will analyze all remaining graphs of~\cite[Table 6]{woof} and determine which ones give string C-groups and which ones do not.

As mentioned in~\cite{woof}, the number of string C-groups of rank $n-k$, with $n\geq 2k +3$, of gives the following sequence of integers indexed by $k$ and starting at $k = 1$:
 $$(1, 1, 7, 9, 35, 48, 135)$$ 
This sequence is available as sequence number A359367 in the On-Line Encyclopedia of Integer Sequences.
We believe that the gluing method described in Theorem~\ref{taping} might be useful to determine the full sequence.

\end{document}